\newcommand{\be} {\begin{eqnarray*}}
\newcommand{\ee} {\end{eqnarray*}}
\newcommand{\bbeta} {\boldsymbol{\beta}}
\newcommand{\bSigma} {\boldsymbol{\Sigma}}
\newcommand{\bDelta} {\mbox{\boldmath $ \Delta$}}
\newcommand{\argmax}{\operatornamewithlimits{argmax}}
\newcommand{\bfV} {{\bf V}}
\newcommand{\bb} {{\bf b}}
\newcommand{\argmin}{\mathop{\rm argmin~}}
\newcommand{\bfH}{{\bf H}}
\newcommand{\bfh}{{\bf h}}
\newcommand{\bfg}{{\bf g}}
\newcommand{\bfX}{{\bf X}}
\newcommand{\bfZ} {{\bf Z}}
\newcommand{\bfx}{{\bf x}}
\newcommand{\bfA}{{\bf A}}
\newcommand{\bfa}{{\bf a}}
\newcommand{\bfv}{{\bf v}}
\newcommand{\bfP}{{\bf P}}
\newcommand{\bfq}{{\bf q}}
\newcommand{\bfQ}{{\bf Q}}
\newcommand{\bfu}{{\bf u}}
\newcommand{\bxi} {\mbox{\boldmath $\xi$}}
\newcommand{\X}{\bold{X}}
\newcommand{\V}{\bold{V}}
\newcommand{\1}{\bold{1}}
\newcommand{\vbeta}{\boldsymbol{\beta}}
\def\X{{ \widetilde{X} }}
\newtheorem{theorem}{Theorem}[section]
\newtheorem{lemma}[theorem]{Lemma}
\newtheorem{defn}[theorem]{Definition}
\newtheorem{proposition}[theorem]{Proposition}
\newtheorem{corollary}[theorem]{Corollary}
\newtheorem{rem}{Remark}[section]
\begin{document}
\footnotesize
\title{A Modern Theory for High-dimensional Cox Regression Models}
\author[1]{Hanxuan Ye
}
\author[1]{Xianyang Zhang}
\author[2]{Huijuan Zhou}
\date{}
\affil[1]{Texas A\&M University}
\affil[2]{Shanghai University of Finance and Economics}
\maketitle

\textbf{Abstract:} The proportional hazards model has been extensively used in many fields such as biomedicine to estimate and perform statistical significance testing on the effects of covariates influencing the survival time of patients. The classical theory of maximum partial-likelihood estimation (MPLE) is used by most software packages to produce inference, e.g., the \texttt{coxph} function in R and the \texttt{PHREG} procedure in SAS. In this paper, we investigate the asymptotic behavior of the MPLE in the regime in which the number of parameters $p$ is of the same order as the number of samples $n$. The main results are (i) existence of the MPLE undergoes a sharp `phase transition'; (ii) the classical MPLE theory leads to invalid inference in the high-dimensional regime. We show that the asymptotic behavior of the MPLE is governed by a new asymptotic theory. These findings are further corroborated through numerical studies. The main technical tool in our proofs is the Convex Gaussian Min-max Theorem (CGMT), which has not been previously used in the analysis of partial likelihood. Our results thus extend the scope of CGMT and shed new light on the use of CGMT for examining the existence of MPLE and non-separable objective functions. 
\\
\strut \textbf{Keywords:} 
Convex Gaussian Min-max Theorem, Cox Regression, High-dimensionality, Likelihood-ratio Test, Wald Test.

\section{Introduction}
\subsection{Background}
Since the first introduction in 1972 by D. R. Cox, the proportional hazards model has been routinely used in many applied fields such as biomedicine in order to investigate the association between the survival time of patients and predictor variables. In the proportional hazards model, the hazard for an individual, $i$, with covariates $\bfX_i=(X_{i1},\dots,X_{ip})^\top$ is specified as a product $$\lambda(t|\bfX_i)=\lambda_0(t)\exp(\bfX_i^\top \bbeta^*),$$
of an unknown baseline hazard function $\lambda_0(\cdot)$ and a relative risk function $\exp(\bfX_i^\top \bbeta^*)$ in which the individual covariate values enter linearly via the regression coefficients $\bbeta^*=(\beta_1^*,\dots,\beta_p^*)^\top.$ When no prior
knowledge is available regarding the structure of the parameters, the proportional hazards model is often fitted via maximizing the partial log-likelihood function. Classical theory of the maximum partial likelihood estimation (MPLE) states that when the dimension of variables $p$ is fixed and the sample size $n\rightarrow+\infty$,
$$\sqrt{n}(\widehat{\bbeta}-\bbeta^*)\overset{d}{\to}N(0,\mathbf{I}^{-1}_{\bbeta^*}),$$
where $\widehat{\bbeta}$ denotes the maximum partial likelihood estimator, $\mathbf{I}_{\bbeta^*}$ is the $p\times p$ Fisher information matrix evaluated at the true value $\bbeta^*$ and $\overset{d}{\to}$ stands for convergence in distribution. This result has been adopted by many software packages to produce significance testing and confidence intervals e.g., the \texttt{coxph} function in R and the \texttt{PHREG} procedure in SAS.

\subsection{Motivation}
In modern clinical studies, it is often of interest to understand the association between patients’ survival times and a set of high-dimensional covariates such as genomics features and medical images. The use of proportional hazards model to large data sets thus raises the following questions:
\begin{enumerate}
    \item[(A)] does the classical theory of MPLE provide a good approximation to the finite sample behaviors when the number of variables $p$ is a non-negligible proportion of the sample size $n$? 
    \item[(B)] If the classical theory fails in the high-dimension paradigm, is there a new theory characterizing the asymptotic properties of MPLE?  
\end{enumerate}

\subsection{Prior works and our contribution}
Previous works in the survival analysis literature have focused on the sparse regime where the number of relevant predictors is much smaller than the sample size, and employed the penalized partial likelihood approach to perform simultaneous estimation and variable selection \citep{TibshiraniCox1997,FanCox2002,gui2005,zhang2007,BradicCox2011}. Oracle inequalities for the penalized MPLE have been obtained in \cite{Gaiffas2012,Huang2013,Kong2014}. A more recent line of research studies hypothesis testing and confidence interval construction for high-dimensional Cox regression using the debiasing approach \citep{Fang2017,Yu2018,Kong2021}.

In this work, with the aim to answer questions (A) and (B), we study the original MPLE in the high-dimensional setting where $p$ and $n$ diverge to infinity simultaneously with $p/n\to\delta\in (0,1)$. To the best of our knowledge, the asymptotic properties of the original MPLE have not been studied under this asymptotic regime in the literature. Our main results are summarized as follows.
\begin{enumerate}
    \item[(i)] Under the Gaussian assumption on the covariates, the existence of the MPLE undergoes a sharp `phase transition'. The MPLE exists asymptotically (with probability approaching one) only when $\delta$ is below a quantity $h(\lambda_0,\kappa,P_\mathcal{C})$ that is determined by the base line hazard function $\lambda_0$, the signal
    strength $\kappa^2:=\lim \text{var}(\bfX_i^\top\bbeta)$ and the distribution of the censoring time $P_\mathcal{C}$.
    \item[(ii)] The classical MPLE theory leads to invalid inference in the high-dimensional regime where $p/n\to\delta\in (0,1)$. We show that the asymptotic behaviors of the MPLE and the Wald test formed by the sum of squares of the MPLE are governed by a new asymptotic theory. In particular, the asymptotic bias and variance of the MPLE are precisely characterized by the new theory. The Wald test is shown to converge to a scaled chi-square distribution. 
\end{enumerate}

\subsection{Technical tools}
There have been several recent works on understanding the asymptotic behaviors of statistical estimators derived from minimizing a convex loss function in the high-dimensional setting. Examples include the regularized linear regression \citep{ThrampoulidisRegularized2015}, M-estimation \citep{ElKarouiRobust2013,DonohoHigh2016}, penalized M-estimation \citep{ThrampoulidisPrecise2018}, logistic regression \citep{SurModern2019}, reguralized logistic regression \citep{SalehiImpact2019}, high-dimensional classification \citep{Liang2020,t2020}, adversarial training \citep{jm2020} among others. All the above results are derived under the assumptions that $p/n\rightarrow \delta>0$ and most of the works assumed that the covariates follow a Gaussian distribution.
The technical tools employed in these studies can be roughly classified into three categories: (a) the leave-one-out argument; (b) the approximate message passing (AMP) algorithm and the associated state evolution equations; (c) the Convex Gaussian Min-max Theorem (CGMT). In \citet{ElKarouiRobust2013}, the authors developed the leave-one-out technique to heuristically derive a nonlinear system of two deterministic equations that characterizes the asymptotic square errors of the M-estimator. 
A rigorous proof of these results based on the leave-one-out argument was provided in \citet{ElKarouiAsymptotic2013}. The AMP algorithm was first introduced in \citet{Donoho2009} as an efficient reconstruction scheme in compressed sensing. The authors further derived a system of state evolution equations to accurately predict the dynamical behavior of several observables involved in the AMP algorithm. The AMP technique was later on adopted by \citet{DonohoHigh2016} and \citet{SurModern2019} to study the high-dimensional M-estimation and logistic regression respectively.
Along a different line, \citet{ThrampoulidisRegularized2015,ThrampoulidisPrecise2018} introduced the CGMT as a stronger version
of the classical Gaussian Min-max Theorem due to \citet{Gordon1988}. The usefulness of the CGMT lies on that it associates the original primary optimization (PO) with an auxiliary optimization (AO) problem from which one can infer the asymptotic properties regarding the original PO. In many applications, the AO problem can be reduced to an optimization problem involving only scalar variables. The Karush-Kuhn-Tucker conditions with respect to the scalar variables in the AO problem induce a set of equations 
that characterizes the asymptotic properties of the optimal solution to the PO. The CGMT has proved useful  
in several contexts arising from high-dimensional statistics, machine learning and information theory, see e.g., \citet{Dhifallah2018,SalehiImpact2019,Hu2019,Liang2020,t2020,jm2020}. 

The main results (i) and (ii) in this paper are also built upon the CGMT. To obtain (i), we observe that the existence of MPLE is related to the optimal value of a convex optimization problem. Using the CGMT and some results from convex geometry, we prove the phase transition phenomenon for the existence of MPLE and obtain the corresponding phase transition curve. Result (ii) are derived using the CGMT by relating the MPLE to the solution of an AO problem. However, due to the non-separability of the partial likelihood function, our analysis is more involved than those for M-estimation and logistic regression, and extra effort is needed to deal with the AO problem and derive the optimality conditions, see Section \ref{sec:M}. Finally, we emphasize that our arguments are different from those in \citet{SurModern2019} which is built on the AMP technique that does not seem directly applicable to our setting.

The rest of the paper is organized as follows. Section \ref{sec:prelim} introduces the setups and discusses the failures of the classical large sample theories for MPLE in high-dimension. We study the existence of MPLE and derive the phase transition curve in Section \ref{sec:exist}. We develop a new asymptotic theory in Section \ref{sec:asym}, which is used to perform asymptotic exact error analysis on the MPLE and to derive the asymptotic distributions of the MPLE.
We further present some numerical results to corroborate our theoretical findings within each section. Section \ref{sec:future} concludes and discusses a few future research directions.

\section{Preliminaries}\label{sec:prelim}
\subsection{Basic setup}
Consider a sequence of i.i.d samples $\{(\bfX_i,T_i)\}^{n}_{i=1}$ generated from the population $(\bfX,T)$, where $\bfX_i=(X_{i1},\dots,X_{ip})^\top$ is a $p$-dimensional covariate associated with the $i$th individual. In practice, not all the survival times are fully observable. We consider a sequence of right censoring times $\{C_i\}^{n}_{i=1}$ that are independent of the survival times $\{T_i\}^{n}_{i=1}$ (see Remark \ref{rm-c} for a relaxation of this assumption). Thus we work with the i.i.d. observations $(Y_i,
\bfX_i,\Delta_i)$, where $Y_i=T_i\land C_i:=\min(T_i,C_i)$ and $\Delta_i= \mathbf{1}\{T_i\le C_i\}$ are event time and censoring indicator, respectively. The Cox proportional hazards model specifies the hazard function for the $i$th individual as
\begin{align}\label{eq-lambda}
\lambda(t|\bfX_i)=\lambda_0(t)\exp(\bfX_i^\top \bbeta^*),
\end{align}
where $\bbeta^*\in \mathbb{R}^p$ is the parameter of interest and $\lambda_0(t)$ is the unknown baseline hazard function. The maximum partial likelihood estimator (MPLE) is defined as
\begin{align}\label{PL}
\widehat{\bbeta}=\argmax_{\bbeta} L(\bbeta),\quad
L(\bbeta)=&\frac{1}{n}\sum_{i=1}^n\left\{\bfX_i^\top \bbeta-\log\left(\frac{1}{n}\sum_{j=1}^n\mathbf{1}\{Y_j\ge Y_i\}\exp(\bfX_j^\top \bbeta)\right)\right\}\Delta_i,
\end{align}
where $L(\bbeta)$ is the log partial likelihood function evaluated at $\bbeta.$ Compared to M-estimation and logistic regression, the log partial likelihood is a sum of non-i.i.d random variables which complicates the analysis.

\subsection{Failures of classical large sample theories}
In classical large sample theories, we assume $p$ is fixed and let $n\to\infty$. Under mild regularity conditions, the MPLE behaves similarly as the ordinary MLE \citep{MurphyProfile2000}
\begin{align*}
	\sqrt{n}(\widehat{\bbeta}-\bbeta^*)\overset{d}{\to}N(0,\mathbf{I}^{-1}_{\bbeta^*}),
\end{align*}
where $\mathbf{I}_{\bbeta^*}=-\mathbb{E}[\partial^2 L(\bbeta)/\partial \bbeta\partial\bbeta^\top|_{\bbeta=\bbeta^*}]$ is the $p\times p$ Fisher information matrix evaluated at the truth. However, in the comparable setting where $p$ goes to infinity with the same rate as $n$, the classical theories can lead to invalid inference. We use numerical examples to illustrate this point. Through the numerical studies below, we set $n=4,000$ and $p=800$ (so that $\delta=0.2$). Suppose the entries of the design matrix $(\bfX_1,\dots,\bfX_n)^\top$ follow $N(0,1/p)$ independently. We set $\lambda_0(t)=\lambda=1$ for the baseline hazard function and $C_i\sim^{\text{i.i.d.}} \text{Unif}(1,2)$ for the censoring times. We have the following observations which are in general similar to those in \citet{SurModern2019} for high-dimensional logistic regression.

\begin{figure}
	\begin{subfigure}[b]{0.485\textwidth}
		\centering
		\includegraphics[width=\textwidth]{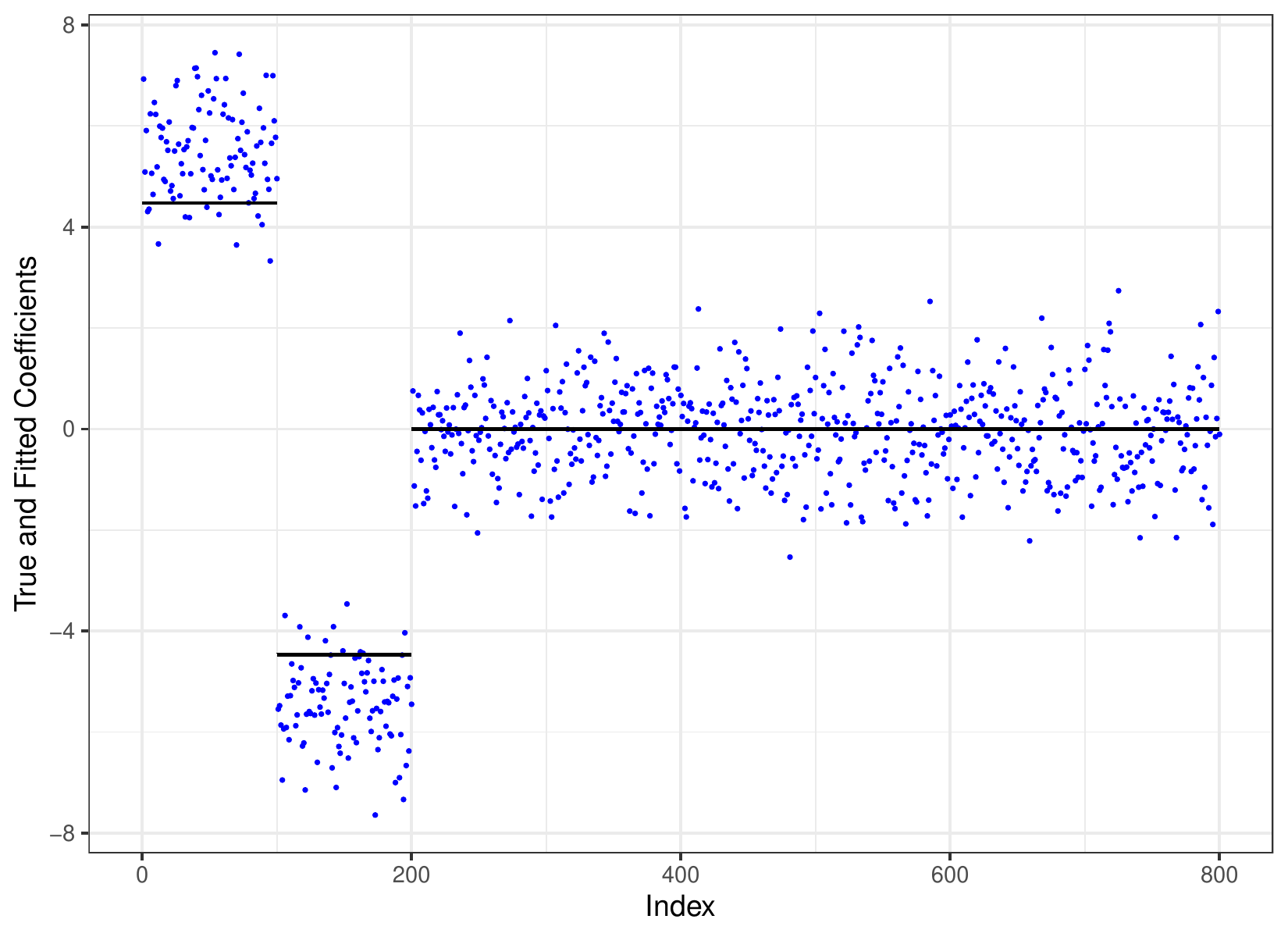}
		\subcaption*{(a) The true and estimated values of the regression coefficients. The dark line segments represent the values of $\bbeta^*$ and blue points represent the values of $\widehat{\bbeta}$ for the corresponding coordinates. }
	\end{subfigure}
	\hfill
	\begin{subfigure}[b]{0.485\textwidth}
		\centering
		\includegraphics[width=\textwidth]{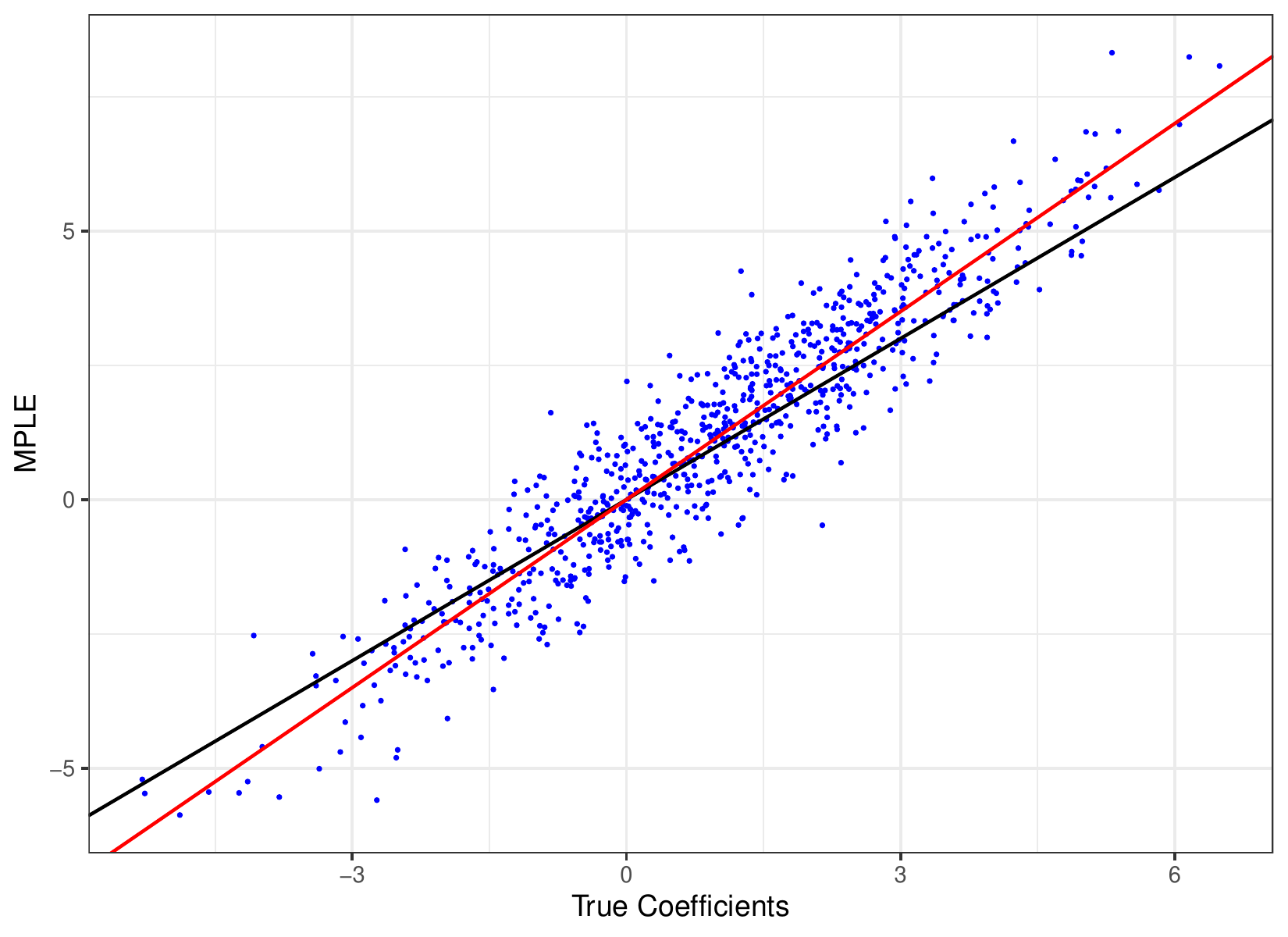}
			\subcaption*{(b) The blue points correspond to the pairs $(\beta_j^*,\widehat{\beta}_j)$ for $j=1,2,\dots,p$. The dark line has slope one, and the red line is the fitted least squares regression line based on the blue points.}
	\end{subfigure}
	\caption{The biasness of the MPLE.}
	\label{fig_betaest}
\end{figure}

\begin{enumerate}
    \item MPLE is biased. In the first experiment, we set the first hundred entries of $\bbeta^*$ to be $2\sqrt{5}$, the next hundred entries to be $-2\sqrt{5}$ and the remaining entries to be 0. It can be clearly seen from Figure \ref{fig_betaest}(a) that MPLE is not unbiased. The absolute values of the estimates tend to be larger than the true values. In the second experiment, we generate the entries of $\bbeta^*$ from $N(1, 4)$ independently. Figure \ref{fig_betaest}(b) shows that the pairs of $(\beta_j^*,\hat{\beta}_j)$ do not scatter around the 45 degree line but rather a different line with a larger slope, which indicates an upward bias in the estimation.

    \item The standard deviation (std.) of $\widehat{\bbeta}$ from the Fisher information matrix (abbreviated as Fisher std.) is smaller than the true std. To see this, we generate half of the entries of $\bbeta^*$ independently from $N(3, 1)$ and let the remaining be zeros. We conduct 1,000 simulation runs, estimate the Fisher std. by the square root of the average of 1,000 diagonals of the inverse of the matrix $-\partial^2 L(\bbeta)/\partial \bbeta\partial\bbeta^\top|_{\bbeta=\bbeta^*}$, and estimate the true std. by the std. of 1,000 estimates of $\bbeta^*$. Figure (\ref{fig_betaest3std_cox}) shows the mean of the 400 estimates of the Fisher stds of the null coefficients and the histogram of the estimates of the true stds of the null coefficients. Apparently, the Fisher std. underestimates the true std.

    \item The partial log-likelihood ratio test does not converge to a chi-square distribution, and the Wald z-test does not converge to a standard normal distribution. Again we let half of the entries of $\bbeta^*$ be generated independently from $N(3, 1)$ and the rest be zeros. We use the partial log-likelihood ratio test to examine the significance of the first null coefficient (i.e., the 401 entry of the coefficient vector). According to the classical large sample theory, the partial log-likelihood ratio test converges in distribution to $\chi^2_1$ \citep{Wilks1938}.
    We conduct 50,000 simulation runs, and calculate the p-values based on the $\chi^2$ approximation. From Figure \ref{fig_pvalues_cox}(a), we see that the distribution of the p-values deviates significantly from the uniform distribution.
    Using the outputs from the previous simulation (for the second bullet point), we can calculate the Wald z-statistics by the ratio between the 1,000 estimates of the 400 null coefficients and their Fisher stds, and then obtain the p-values, see Figure \ref{fig_pvalues_cox}(b). Again the p-values are not uniformly distributed in this case.
\end{enumerate}

\begin{figure}
	\centering
	\includegraphics[scale=0.45]{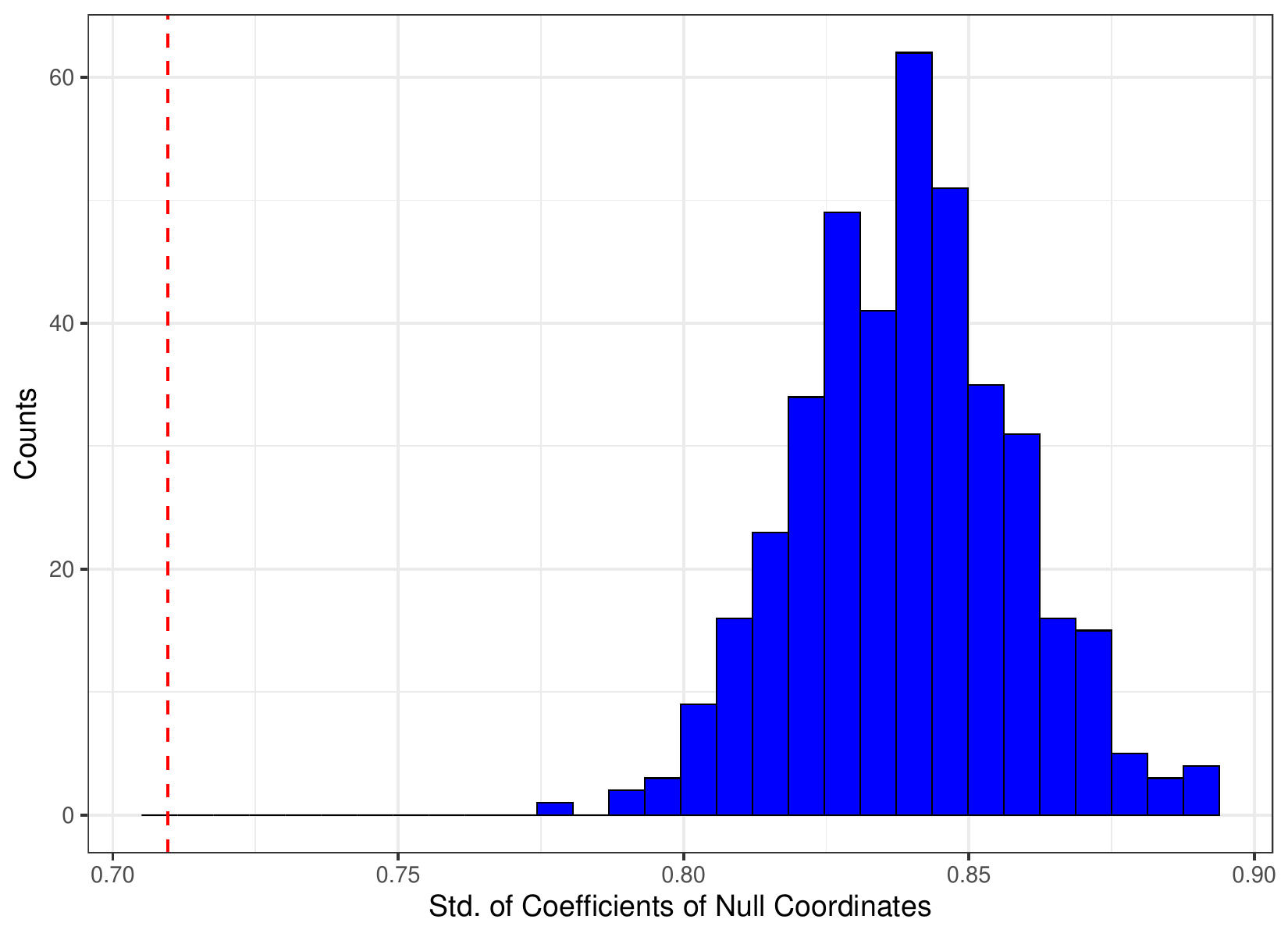}
	\caption{Comparison between the Fisher std. and true std. The red line represents the Fisher std. The blue histogram depicts the empirical distribution 
	of the std's of the 400 null coefficients. 
	}
	\label{fig_betaest3std_cox}
\end{figure}

\begin{figure}
	\begin{subfigure}[b]{0.485\textwidth}
		\centering
		\includegraphics[width=\textwidth]{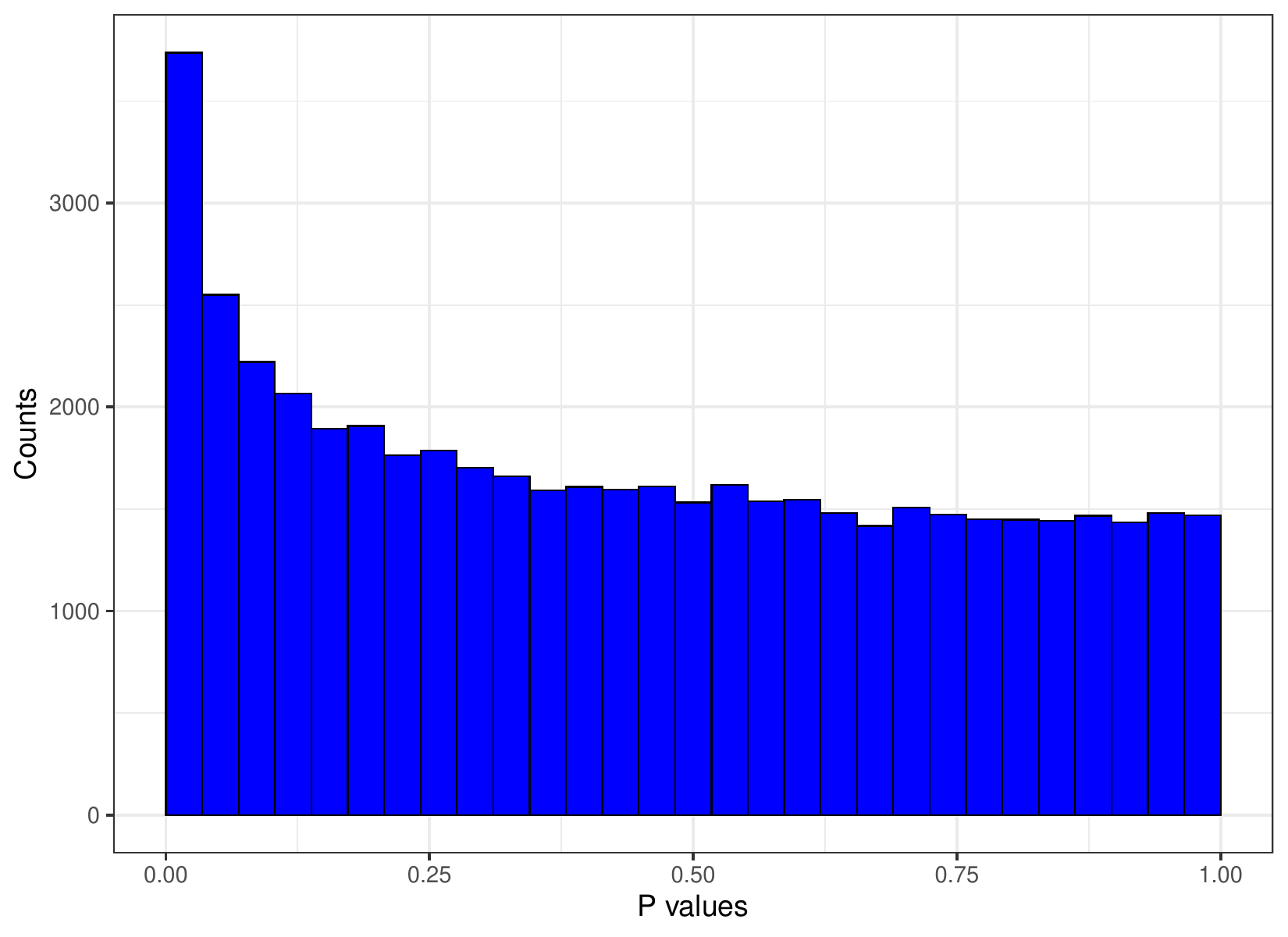}
		\subcaption*{(a) P-values of the partial log-likelihood ratio tests for the null coefficients using the $\chi^2_1$ approximation. }
	\end{subfigure}
	\hfill
	\begin{subfigure}[b]{0.485\textwidth}
		\centering
		\includegraphics[width=\textwidth]{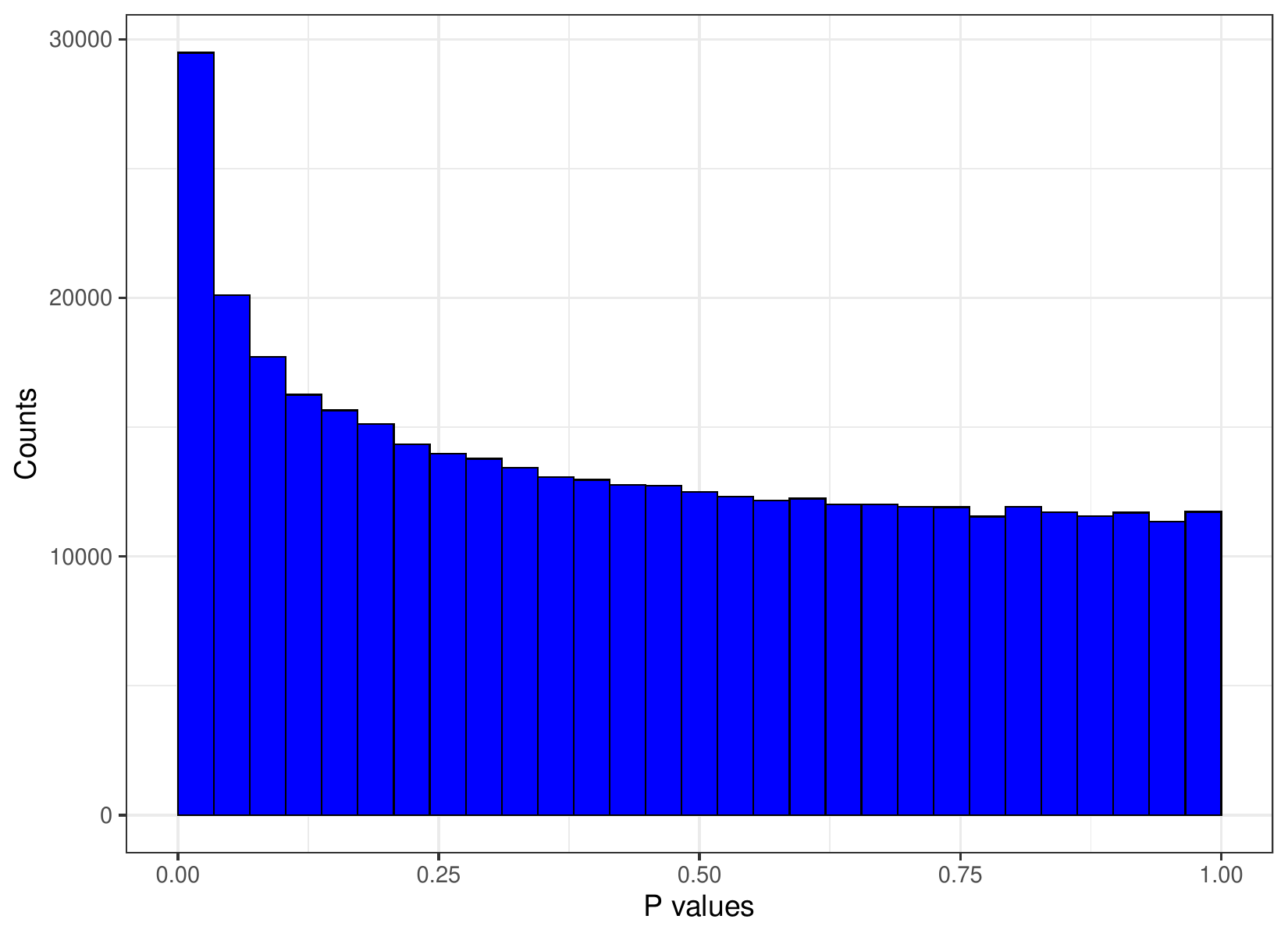}
			\subcaption*{(b) P-values of the Wald-z tests for the null coefficients using the standard normal approximation.}
	\end{subfigure}
	\caption{Invalid inferences based on the classical theory for MPLE.}
	\label{fig_pvalues_cox}
\end{figure}

\section{Existence of the MPLE}\label{sec:exist}
\subsection{Phase transition boundary curve}
As the first step toward understanding the behaviors of the MPLE in high-dimension, we characterize the conditions for the existence of the MPLE. For high-dimensional logistic regression, \citet{CandesPhase2018} established that the existence of the MLE undergoes a phase transition phenomenon, and obtained the explicit form of the boundary curve. However, their argument is not directly applicable to the Cox regression model due to the more complicated characterization of the existence of the MPLE and the model structure. To overcome the difficulty, we present a new argument based on the CGMT technique. The basic idea is to relate the existence of the MPLE to the optimal value of a convex optimization problem (the PO problem). Using the CGMT, we can associate the PO problem with an AO problem. By analyzing the corresponding AO problem, we find the condition under which the MPLE exists with probability approaching one.
Using similar arguments, we manage to recover some of the results in \citet{CandesPhase2018}. The readers are referred to Section \ref{sec:MLE-logistic} for the details.

Throughout the section, we shall assume that $\bfX_i\sim^{\text{i.i.d}} N(0,\bSigma)$ for a non-singular covariance matrix $\bSigma$. 
We first present the general conditions for the existence of the MPLE. Define the set
\begin{align*}
\mathcal{B}:=\text{span}\left\{ \Delta_i(\bfX_j-\bfX_i): 1\leq i\leq n, j\in \mathcal{R}(Y_i)\setminus\{i\}\right\},    
\end{align*}
where $\mathcal{R}(t)=\{j:Y_j\geq t\}$. By \citet{JacobsenExistence1989}, the MPLE exists if and only if the following two conditions are satisfied: 
\begin{enumerate}
    \item $\text{dim}(\mathcal{B})=p$;
    \item There does not exist a nonzero vector $\bb\in\mathbb{R}^p$ such that 
    $$\bb^\top (\bfX_j-\bfX_i)\leq 0,$$
    for all $1\leq i\leq n$ with $\Delta_i=1$ and $j\in\mathcal{R}(Y_i)\setminus \{i\}$.
\end{enumerate}
Suppose $C_i\geq c_L$ and $P(T_i<c_L)>c>0.$ Then with probability tending to one, there exists a $Y_i$ with $Y_i<c_L$ and $\Delta_i=1$. In this case, Condition 1 holds with probability approaching one. By writing $\bfX_i=\bSigma^{1/2}\bfZ_i$ for $\bfZ_i\sim^{\text{i.i.d}} N(0,\mathbf{I}_p)$, Condition 2 can be equivalently expressed as: there does not exist a nonzero vector $\bb\in\mathbb{R}^p$ such that 
$\bb^\top (\bfZ_j-\bfZ_i)\leq 0,$
for all $1\leq i\leq n$ with $\Delta_i=1$ and $j\in\mathcal{R}(Y_i)\setminus \{i\}$.
Therefore, without loss of generality, we may assume that $\bSigma=\mathbf{I}_p$ in the following discussions. Define the set 
\begin{align*}
\mathcal{D}:=\{(i,j): 1\leq i\leq n, \Delta_i=1, j\in\mathcal{R}(Y_i)\setminus \{i\}\}, 
\end{align*}
and let 
$$\kappa^2=\text{var}(\bfX_i^\top\bbeta^*).$$ 
By the rotational invariance of the Gaussian distribution, we can show that the joint distribution of $(Y_i,\bfX^\top_i)=(Y_i,X_{i1},\dots,X_{ip})$ is the same as that of $$(y_i,\bfq_i^\top)=(y_i,q_{i1},\dots,q_{ip}),$$ 
where $y_i=t_i\wedge C_i$ with $t_i$ having the hazard function 
$$\lambda(t|q_{i1})=\lambda_0(t)\exp(\kappa q_{i1})$$ 
and 
\begin{align*}
&\bfq_i=(q_{i1},\dots,q_{ip})^\top\sim N(0,\mathbf{I}_p),\quad (q_{i2},\dots,q_{ip})\perp(y_i,q_{i1}),
\end{align*}
for $1\leq i\leq n.$ To examine the existence of the MPLE, we consider the convex optimization problem
\begin{equation}\label{eq-linear}
\begin{split}
&\max_{-1\leq \bb\leq 1}\sum_{(i,j)\in\mathcal{D}}a_{ij}\bb^\top(\bfq_i-\bfq_j)\\ 
&\text{subject to} \quad \bb^\top(\bfq_i-\bfq_j)\geq 0 \text{ for all }(i,j)\in\mathcal{D},
\end{split}
\end{equation}
where $a_{ij}>0$ is prespecified and fixed, $\bb=(b_1,\dots,b_p)^\top$ and $-1\leq \bb\leq 1$ means $-1\leq b_i\leq 1$ for all $i$. Clearly, the MPLE does not exist if and only if the optimal value of the above problem is greater than zero. Before presenting the main result regarding the existence of the MPLE, we introduce some quantities. 
Without loss of generality, we assume that
$$y_1\geq y_2\geq \cdots \geq y_n,$$
and the indices of censored observations is smaller than the indices of uncensored observations that have the same value. Let $\{2\leq i\leq  n:\Delta_i=1\}=\{i_1,\dots,i_k\}$. Define the set
\begin{align*}
\mathcal{M}=\left\{\mathbf{m}=(m_1,\dots,m_n)\in\mathbb{R}^n: \min_{s<i_l}m_{s}\geq m_{i_l}, \max_{j\in D_{i_l}}m_j\leq m_{i_l}, l=1,\dots,k\right\},
\end{align*}
where $D_{i_l}=\{1\leq j<i_l: y_j=y_{i_l},\Delta_j=1\}$.
We are now in position to present the main result of this section.
\begin{theorem}\label{thm-1}
Define the quantities
\begin{align}
&h_U(\lambda_0,\kappa,P_\mathcal{C})=\limsup\frac{1}{n}\min_{t\in\mathbb{R},\mathbf{m}\in\mathcal{M}}\|\bfh-t \widetilde{\bfq}-\mathbf{m}\|^2, \label{eq-hu}\\
&h_L(\lambda_0,\kappa,P_\mathcal{C})=\liminf\frac{1}{n}\min_{t\in\mathbb{R},\mathbf{m}\in\mathcal{M}}\|\bfh-t \widetilde{\bfq}-\mathbf{m}\|^2,\label{eq-hl}
\end{align}
where $\widetilde{\bfq}=(q_{11},\dots,q_{n1})^\top$ and $\bfh\sim N(0,\mathbf{I}_n)$ is independent of $\{(y_i,q_{i1})\}^{n}_{i=1}$. The MPLE exists (with probability tending to one) if $\delta<h_L(\lambda_0,\kappa,P_\mathcal{C})$ and the MLE does not exist (with probability tending to one) if
$\delta>h_U(\lambda_0,\kappa,P_\mathcal{C})$. When $h_U(\lambda_0,\kappa,P_\mathcal{C})=h_L(\lambda_0,\kappa,P_\mathcal{C})=h(\lambda_0,\kappa,P_\mathcal{C})$, the MPLE undergoes a phase transition with $h(\lambda_0,\kappa,P_\mathcal{C})$ being the boundary curve.
\end{theorem}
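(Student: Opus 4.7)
The plan is to apply the CGMT to translate the MPLE existence question into the asymptotic behavior of the squared distance from a standard Gaussian vector in $\mathbb{R}^n$ to the cone $\mathbb{R}\widetilde{\bfq}+\mathcal{M}$. First I would reduce existence to an optimization: under $P(T_i<c_L)>0$, Condition~1 of Jacobsen's criterion holds with probability tending to one, so existence is equivalent to the optimal value of the LP~\eqref{eq-linear} being zero. After rescaling the feasible box to the Euclidean unit ball (which preserves the sign of the optimum), LP duality combined with Sion's minimax yields
\[
T \;=\; \min_{\mathbf{z}\in\Delta}\bigl\|\bfQ^{\top}\mathbf{d}(\mathbf{z})\bigr\|_2,\qquad \mathbf{d}(\mathbf{z})_i=\sum_{j:(i,j)\in\mathcal{D}}z_{ij}-\sum_{k:(k,i)\in\mathcal{D}}z_{ki},
\]
where $\Delta$ is the probability simplex on $\mathcal{D}$ and $\bfQ=[\bfq_1,\ldots,\bfq_n]^{\top}$; MPLE exists iff $T=0$. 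I would then split $\bfq_i=(q_{i1},\hat{\bfq}_i)$, so that, conditional on $\{(y_i,q_{i1})\}_{i=1}^n$, the residual matrix $\hat{\bfQ}\in\mathbb{R}^{n\times(p-1)}$ has i.i.d.\ $N(0,1)$ entries independent of the survival data, and apply the CGMT to the bilinear term $\hat{\bb}^{\top}\hat{\bfQ}^{\top}\mathbf{d}$ to obtain the auxiliary optimization
\[
\widetilde{T} \;=\; \min_{\mathbf{d}\in\mathcal{D}_\Delta}\sqrt{(\widetilde{\bfq}^{\top}\mathbf{d})^2+\bigl(\|\mathbf{d}\|\,\|\bg\|+\bfh^{\top}\mathbf{d}\bigr)_{+}^{2}},
\]
with $\bg\sim N(0,I_{p-1})$ and $\bfh\sim N(0,I_n)$ fresh independent Gaussians; the closed form comes from maximizing the direction of $\hat{\bb}$ (at $\hat{\bb}\propto\bg$) and then the half-disk $(b_1,\|\hat{\bb}\|)$ analytically.

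Next, positive homogeneity of the two random quantities inside the square root reduces $\widetilde{T}=0$ to the existence of a nonzero $\mathbf{d}$ in the cone $C:=\mathcal{K}\cap\widetilde{\bfq}^{\,\perp}$ (with $\mathcal{K}=\mathrm{cone}(\mathcal{D}_\Delta)$) satisfying $-\bfh^{\top}\mathbf{d}/\|\mathbf{d}\|\ge\|\bg\|$, and the supremum of the left side over $C$ is $\|\Pi_C(-\bfh)\|$, which by the symmetry $-\bfh\overset{d}{=}\bfh$ has the same distribution as $\|\Pi_C(\bfh)\|$. A direct dualization identifies $\mathcal{K}^{\circ}$ with the cone $\mathcal{M}$ from the statement: each pair $(i,j)\in\mathcal{D}$ imposes $m_i\le m_j$, which under the sorting $y_1\ge\cdots\ge y_n$ with the stated tie convention becomes exactly the constraints $\min_{s<i_l}m_s\ge m_{i_l}$ and $\max_{j\in D_{i_l}}m_j\le m_{i_l}$. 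Consequently $C^{\circ}=\mathcal{K}^{\circ}+\mathbb{R}\widetilde{\bfq}=\mathbb{R}\widetilde{\bfq}+\mathcal{M}$, and Moreau's decomposition yields $\|\Pi_C(\bfh)\|^2=\mathrm{dist}^{2}(\bfh,\mathbb{R}\widetilde{\bfq}+\mathcal{M})$. Thus, distributionally, $\widetilde{T}=0$ iff $\mathrm{dist}^{2}(\bfh,\mathbb{R}\widetilde{\bfq}+\mathcal{M})\ge\|\bg\|^{2}$; after dividing by $n$ and using $\|\bg\|^{2}/n\to\delta$, this is precisely the asymptotic inequality $h(\lambda_0,\kappa,P_{\mathcal{C}})\ge\delta$.

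The phase transition then follows: when $\delta<h_L$ the AO event $\{\widetilde{T}=0\}$ holds with probability tending to one, and when $\delta>h_U$ the AO is bounded below by a positive constant with high probability; the two-sided CGMT bounds transfer these statements to the PO value $T$ and hence to the MPLE existence event (using $\bbP(T>\epsilon)\le 2\bbP(\widetilde{T}>\epsilon)$ and the reverse bound). The main obstacle is the cone identification in the second paragraph: dualizing $\mathbf{d}\in\mathcal{K}$ requires careful bookkeeping of the risk-set structure together with the tie-breaking and censoring patterns encoded in $\mathcal{D}$, and the resulting order constraints of $\mathcal{M}$ reflect the monotonicity structure implicit in the partial likelihood. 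A secondary difficulty is promoting the in-probability concentration of $\widetilde{T}$ to the exact equality $T=0$ required for existence, which is handled by taking $\epsilon\downarrow 0$ in the CGMT bound together with the polyhedral LP structure underlying $T$.
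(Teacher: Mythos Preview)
Your proposal is correct and follows essentially the same route as the paper: both reduce existence to the sign of a convex optimization, apply the CGMT after separating the survival-dependent coordinate $q_{i1}$ from the remaining independent Gaussian columns, identify the cone $\mathcal{M}^*$ (your $\mathcal{K}$) generated by the vectors $e_i-e_j$ for $(i,j)\in\mathcal{D}$ and its polar $\mathcal{M}$, and then invoke Moreau's decomposition to rewrite $\|\Pi_{C}(\bfh)\|^2$ as $\mathrm{dist}^{2}(\bfh,\mathbb{R}\widetilde{\bfq}+\mathcal{M})$. The only cosmetic differences are that the paper works with the $\ell^\infty$ box and explicit Lagrange multipliers $v_{ij}\ge 0$ rather than your Euclidean-ball rescaling and LP duality, and that the paper spells out the interior argument (showing $\mathrm{span}^\perp(\widetilde{\bfq}_1)\cap\widetilde{\mathcal{M}}^*\neq\{0\}$ w.h.p.) needed to certify that the projection direction is actually achievable in the \emph{existence} case---this is exactly the ``secondary difficulty'' you flag at the end.
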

\begin{rem}\label{rm-q}
{\rm 
The restriction in $\mathcal{M}$ can be equivalently expressed as the following pairwise constraints
\begin{align*}
\begin{cases}
m_i\leq m_j \quad &\Delta_i\mathbf{1}\{y_j\geq  y_i\}=1,\Delta_j\mathbf{1}\{y_i\geq y_j\}=0,\\
m_i\geq m_j \quad &\Delta_i\mathbf{1}\{y_j\geq  y_i\}=0,\Delta_j\mathbf{1}\{y_i\geq y_j\}=1,\\
m_i=m_j \quad &\Delta_i\mathbf{1}\{y_j\geq  y_i\}=1,\Delta_j\mathbf{1}\{y_i\geq y_j\}=1,\\
\text{no restriction} \quad &\Delta_i\mathbf{1}\{y_j\geq  y_i\}=0,\Delta_j\mathbf{1}\{y_i\geq y_j\}=0.
\end{cases}    
\end{align*}
Under the assumption that $y_1> y_2>\dots>y_n$ (i.e., there is no tie), we argue that the optimization with respect to $\mathbf{m}\in\mathcal{M}$ in the definitions of $h_U$ and $h_L$ can be translated into a quadratic programming (QP) with at most $n-1$ inequality constraints. For each $1\leq i\leq n-1$, let $k_i$ be the smallest index such that $k_i>i$ and $\Delta_{k_i}=1$. Let $\mathcal{G}$ be the set of indices $i$ such that the corresponding $k_i$ exists. Then for fixed $t\in\mathbb{R}$, the optimization with respect to $\mathbf{m}\in\mathcal{M}$ in (\ref{eq-hu}) and (\ref{eq-hl}) can be formulated as
\begin{align*}
&G_n(t):=\min_{\mathbf{m}=(m_1,\dots,m_n)\in\mathcal{M}} \|\widetilde{\bfh}_t-\mathbf{m}\|^2,\\
&\text{subject to } m_i\geq m_{k_i} \text{ for } i\in\mathcal{G},
\end{align*}
with $\widetilde{\bfh}_t=\bfh-t \widetilde{\bfq}$, which can be solved efficiently using existing QP solvers. By performing an one-dimensional optimization, we can find $\min_{t\in\mathbb{R}} G_n(t)=\min_{t\in\mathbb{R},\mathbf{m}\in\mathcal{M}}\|\bfh-t \widetilde{\bfq}-\mathbf{m}\|^2$. 
}
\end{rem}


\subsection{Checking the existence of MPLE in finite sample}\label{sec:finite-exist}
Next, we discuss how to solve the convex optimization problem (\ref{eq-linear}) by reducing the number of constraints and conduct a numerical study to compare the phase transition boundary curve with the empirical results. 
Indeed we can infer that the number of constraints is no more than $2(n-1)$. More precisely, the number of constraints is equal to $i_k-1+\sum_{l=1}^s(m_l-1)$, where $i_k$ is the maximum index of uncensored observations, $s$ is the number of tie values that have at least two uncensored observations, and $m_l$ is the number of uncensored observations that are equal to the $l$th tie value. In fact, we can write down the constraints explicitly. Let $\{i_1,\dots,i_k\}=\{1\leq i\leq n:\Delta_i=1\}$ with $i_1<\cdots<i_k$. Let $\{j_{l,1},\dots,j_{l,m_l}\}\subseteq\{i_1,\dots,i_k\}$ with $j_{l,1}<\cdots<j_{l,m_l}$ be the index set of the uncensored observations that are equal to the $l$th tie value (with at least two uncensored observations) for $l=1,\dots,s$. Then the full set of constraints are given by
\begin{align}
    &\bb^\top(\bfq_{i_l}-\bfq_{i_{l-1}})\ge0,\bb^\top(\bfq_{i_l}-\bfq_{i_{l-1}+1})\ge0,\dots,\bb^\top(\bfq_{i_l}-\bfq_{i_{l}-1})\ge0,\quad l=1,\dots,k, \label{con-linear-1}\\
    &\bb^\top(\bfq_{j_{l,1}}-\bfq_{j_{l,2}})\ge0,\bb^\top(\bfq_{j_{l,2}}-\bfq_{j_{l,3}})\ge0,\dots,\bb^\top(\bfq_{j_{l,m_l-1}}-\bfq_{j_{l,m_l}})\ge0,\quad l=1,\dots,s,\label{eq-add}
\end{align}
where $i_0=1$ and (\ref{eq-add}) is the additional set of constraints due to the existence of ties. When there is no tie, we only need the $i_k-1$ constraints in (\ref{con-linear-1}). Under the constraints in (\ref{con-linear-1}) and (\ref{eq-add}) and using the simple fact that $\bb^\top(\bfq_i-\bfq_j)\ge 0$ and $\bb^\top(\bfq_j-\bfq_k)\ge 0$ imply that $\bb^\top(\bfq_i-\bfq_k)\ge 0$, one can recover all the constraints in (\ref{eq-linear}). Therefore, the existence of the MPLE can be solved efficiently through the linear programming (\ref{eq-linear}) with  $i_k-1+\sum_{l=1}^s(m_l-1)$ constraints. 


We empirically verify that the existence of the MPLE undergoes a phase transition and the finite sample transition boundary matches well with the theoretical boundary curve derived in Theorem \ref{thm-1}. To generate the data, we assume that each $\beta_i^*/c_\kappa$ is independently generated from the uniform distribution on $[\kappa-1,\kappa+1]$, where the scaling parameter $c_\kappa=\sqrt{ \kappa^2/(1/3 + \kappa^2)}$ ensures that $\|\vbeta^*\|/\sqrt{p}=\kappa$. The survival time $T_i$ follows the exponential distribution with the rate parameter $\lambda_i=\exp(\bfX_i^\top\bbeta^*)$, and the censoring time $C_i$ follows the uniform distribution on $[1,2]$ which is independent of $T_i$. As there is no tie in $\{Y_i\}$, we can examine the existence of the MPLE by solving problem (\ref{eq-linear}) with the $i_m-1$ constraints given in (\ref{con-linear-1}).
Figure~\ref{fig:phase_500} (a) summarizes the results based on $n=500$ and $500$ replications. The red theoretical boundary curve that separates the $\delta-\kappa$ plane into two regions is obtained by solving the constrained quadratic programming described in Remark \ref{rm-q}. While the white and black regions obtained by solving the problem (\ref{eq-linear}) indicate the probability that the MPLE exists (black is zero, and white is one). Overall, the finite sample transition boundary is consistent with the theoretical boundary, which demonstrates the practical relevance of the theoretical finding. In addition, we explore the change of the phase transition boundary with different censoring time distributions. Consider $C_i\sim U[1, b+1]$, where $b \in \{0.5, 1, 7, 30 \}$. The phase transition boundary in Figure~\ref{fig:phase_500} (b) shifts from the right to the left as $b$ decreases, which makes intuitive sense as for a higher censoring rate (i.e., smaller $b$), the existence of the MPLE requires a smaller $\delta$.

\begin{figure}[ht!]
    \centering
    \begin{subfigure}[t]{0.485\textwidth}
        \includegraphics[width=\textwidth]{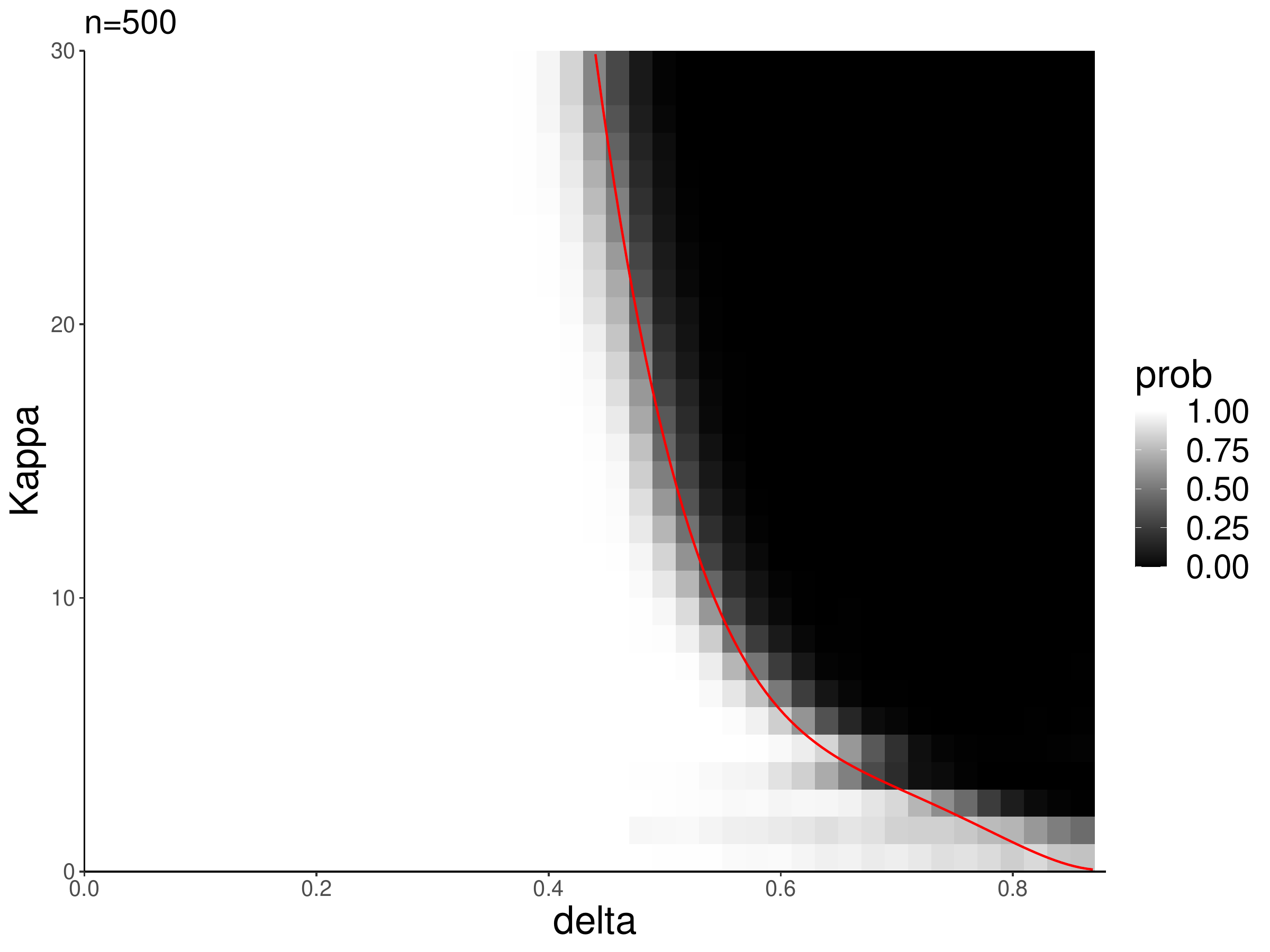}
        \caption{Empirical probability that the MPLE exists (black is zero, and white is one) estimated based on $n=500$ samples and 500 replications. The red curve indicates the theoretical transition boundary.}
    \end{subfigure}
    \hfill
    \begin{subfigure}[t]{0.485\textwidth}
        \includegraphics[width = \textwidth]{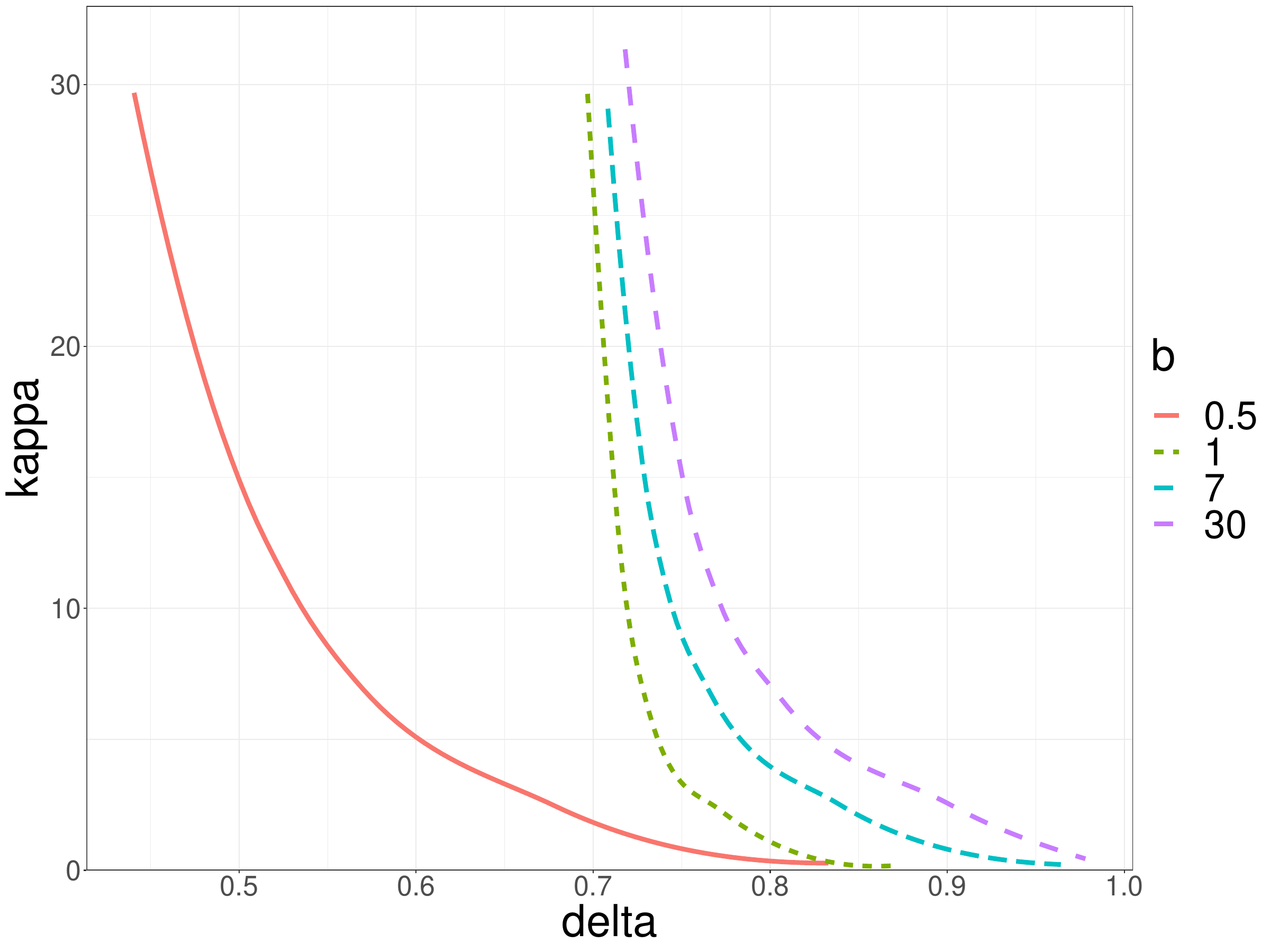}
        \caption{The theoretical transition boundary curves when the censoring time $C_i\sim U[1, b+1]$ with $b\in\{0.5,1,7,30\}$.}
    \end{subfigure}
    \caption{Theoretical transition boundary and the empirical probability that the MPLE exists.}
    \label{fig:phase_500}
\end{figure}



\section{A New Asymptotic Theory}\label{sec:asym}
\subsection{Error analysis}
We develop a new asymptotic theory to describe the asymptotic behavior of the MPLE in the high-dimensional setting. The core of our theory is a set of nonlinear equations derived using CGMT that characterize the behavior of the MPLE. Built upon these equations, we perform an asymptotic exact error analysis on the MPLE and study the asymptotic distributions of the MPLE. Throughout the discussions below, we assume that 
\begin{itemize}
    \item[A1] $X_{ij}\sim^{\text{i.i.d}} N(0,1/p)$ for $1\leq i\leq n$ and $1\leq j\leq p$; 
    \item[A2] $p/n\rightarrow \delta\in(0,1)$.
\end{itemize}
Recall that under the proportional hazards model (\ref{eq-lambda}), the survival function of the survival time $T_i$ is given by
\begin{align*}
S(x|\bfX_i^\top \bbeta^*)=\exp\left\{-\exp(\bfX_i^\top \bbeta^*)\int^{x}_{0}\lambda_0(t)dt\right\},
\end{align*}
As $\kappa^2=\text{var}(\bfX_i^\top\bbeta^*)=\|\beta^*\|^2/p$, $\bfX_i^\top \bbeta^*=^d \kappa Z$ for $Z\sim N(0,1)$, where ``$=^d$'' means equal in distribution. The next assumption can be justified under mild conditions using the law of large numbers.
\begin{itemize}
    \item[A3] Assume that
\begin{align}
&\frac{1}{n} \sum^{n}_{i=1}\mathbf{1}\{T_i\leq C_i\}\rightarrow^p 1-\mathbb{E}[S(C|\kappa Z)],\\
&  \frac{1}{\kappa n}\sum^{n}_{i=1}\mathbf{1}\{T_i\leq C_i\}\bfX_i^\top \bbeta^*\rightarrow^p -\mathbb{E}[S(C|\kappa Z)Z].
\end{align}
where $(C,\kappa Z)=^d (C_i,\bfX_i^\top \bbeta^*)$.
\end{itemize}

Let $a,b$ and $r$ be three scalar quantities that are used to describe the asymptotic behavior of MPLE. The roles of $a$ and $b$ will be made clear later. Further let $\mathbf{q}=(q_1,\dots,q_n)^\top$ with $ q_i=\bfX_i^\top\bbeta^*/\kappa$ and
$\bfh\sim N(0,\mathbf{I}_n)$ that is independent with $(Y_i,
\bfX_i,\Delta_i,C_i)$. Set $\bxi=(\xi_1,\dots,\xi_n)^\top =\kappa a\mathbf{q}+b \bfh+\bDelta \sqrt{\delta} b/r$, where $\bDelta=(\Delta_1,\dots,\Delta_n)^\top.$ We write $G_n(\bfu):=\sum_{i=1}^n\Delta_i\log\left(n^{-1}\sum_{j=1}^n\mathbf{1}\{Y_j\ge Y_i\}\exp(u_j)\right)$ for $\bfu=(u_1,\dots,u_n)$.
Define the proximal operator of $G_n(\bfu)$ at $\bxi$ as
\begin{align*}
&\bfu^*=(u_1^*,\dots,u_n^*)=\arg\min_{\bfu\in\mathbb{R}^n} G_n(\bfu) +\frac{r}{2\sqrt{\delta} b}\left\|\bfu-\bxi\right\|^2.
\end{align*}
To introduce the main result, we require convergence of some counting processes at $\bfu^*$.
Let $Y_i(t)\in \{0,1\}$ be a predictable at risk indicator process which takes the value one when the $i$th subject is under observation \citep{AG82}. We make the following weak convergence assumption.
\begin{itemize}
    \item[A4] There exist processes $S(s,t)$, $S(t)$ and $R(t)$ such that     
    \begin{align*}
    & \frac{1}{n}\sum^{n}_{i=1}Y_i(s)Y_i(t)\exp(2u_i^*)\rightarrow^p S(s,t),    \\
   & \frac{1}{n}\sum^{n}_{i=1}Y_i(t)\exp(u_i^*)\rightarrow^p S(t), \\
  & \frac{1}{n}\sum^{n}_{i=1}Y_i(t)\exp(\bfX_i^\top \bbeta^*)\lambda_0(t)\rightarrow^p R(t).
\end{align*}
\end{itemize}
For $b_1,b_2,b_3\in\mathbb{R}$, consider the nonlinear equation 
$b_3\{\log(u)-b_1\}=
-b_2u$ 
with respect to $u>0$. Let $K(b_1,b_2,b_3)$ be the solution to the equation, i.e., $b_3[\log\{K(b_1,b_2,b_3)\}-b_1]=-b_2K(b_1,b_2,b_3)$. We introduce the following function
\begin{align*}
&M\left(\kappa a,b,\frac{b\sqrt{\delta}}{r}\right) \\  =&\int^{1}_{0}\log(S(s))R(s)ds+\frac{b}{2r\sqrt{\delta}}\int^{1}_{0}\int^{1}_{0}\frac{E\left[Y(s)Y(t)K^2\left(\xi,\int^{1}_{0}\frac{Y(u)}{S(u)}R(u)du,r/(b\sqrt{\delta})\right)\right]}{S(s)S(t)}R(s)R(t)ds dt,  
\end{align*}
where
$S(\cdot)$ is the solution to the equation
\begin{align*}
S(s)=E\left[Y(s)K\left(\xi,\int^{1}_{0}\frac{Y(u)}{S(u)}R(u)du,\frac{r}{b\sqrt{\delta}}\right)\right]
\end{align*}
and
$(Y(t),\xi)=^d (Y_i(t),\xi_i)$. Denote the partial derivative of $M(\cdot,\cdot,\cdot)$ by
\begin{align*}
M_i(a_1,a_2,a_3)=\frac{\partial M(a_1,a_2,a_3)}{\partial a_i},\quad 1\leq i\leq 3.
\end{align*}
\begin{theorem}
Under Assumptions A1-A4, the asymptotic behavior of the MPLE is governed by the following three nonlinear equations:
\begin{align}
&M_1\left(\kappa a,b,\frac{b\sqrt{\delta}}{r}\right)
=-\mathbb{E}[S(C|\kappa Z)Z], \label{neq-1}\\
&M_2\left(\kappa a,b,\frac{b\sqrt{\delta}}{r}\right) =\sqrt{\delta} r,\label{neq-2}\\
&M_3\left(\kappa a,b,\frac{b\sqrt{\delta}}{r}\right)
=-\frac{r^2}{2}
+\frac{1}{2}\left(1-\mathbb{E}[S(C|\kappa Z)]\right).\label{neq-3}
\end{align}
\end{theorem}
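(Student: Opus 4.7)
The plan is to invoke the CGMT to replace the MPLE's primary optimization (PO) by an auxiliary optimization (AO) that can be reduced to an optimization over a small number of scalar parameters $a,b,r$, and then derive equations (\ref{neq-1})--(\ref{neq-3}) as the first-order conditions of that scalar AO. Unlike the separable losses treated in \citet{DonohoHigh2016} or \citet{SurModern2019}, the partial log-likelihood $G_n(\bfu)$ couples all coordinates of $\bfu$ through the at-risk sets, which forces us to keep the full vector-valued proximal operator $\bfu^*$ of $G_n$ as the central object throughout the argument and to invoke Assumption A4 to collapse it to a pointwise fixed-point equation.

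For the setup, I would rewrite $\widehat{\bbeta}=\argmin_{\bbeta}\{n^{-1}G_n(\mX\bbeta)-n^{-1}\bDelta^\top \mX\bbeta\}$ and decompose $\bbeta=(a/\kappa)\bbeta^*+\bbeta^\perp$ with $\bbeta^\perp\perp\bbeta^*$ and $b=\|\bbeta^\perp\|$. By Assumption A1 and rotational invariance, $\mX\bbeta^*=^d\kappa\bfq$ and $\mX\bbeta^\perp$ is a Gaussian vector with i.i.d.\ $N(0,b^2/p)$ entries that is independent of $\bfq$. Introducing the slack $\bfu=\mX\bbeta$ with Lagrange multiplier $\bv$ turns the PO into a saddle-point problem whose only Gaussian coupling is the bilinear form $\bv^\top\mX\bbeta^\perp$, which is the canonical input of CGMT. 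The CGMT then replaces this bilinear form by $b\bv^\top\bfh/\sqrt{p}+\|\bv\|\mathbf{g}^\top\bbeta^\perp/\sqrt{p}$ with independent standard Gaussians $\bfh,\mathbf{g}$.

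After concentrating out the directions of $\bv$ and $\bbeta^\perp$, the AO reduces to an optimization over scalars $(a,b,r)$, where $r$ is the magnitude of $\bv$ left after its direction is concentrated out. The remaining inner minimization over $\bfu$ is exactly the proximal problem defining $\bfu^*$ at $\bxi=\kappa a\bfq+b\bfh+\bDelta\sqrt{\delta}b/r$ with step $r/(b\sqrt{\delta})$; the drift $\bDelta\sqrt{\delta}b/r$ arises from absorbing the linear term $-\bDelta^\top\bfu/n$ into the quadratic penalty. The coordinate-wise KKT condition for $\bfu^*$ couples index $i$ to all others through the finite-sample at-risk average $S_n(t)=n^{-1}\sum_j Y_j(t)\exp(u_j^*)$; invoking A4 replaces this average by $S(\cdot)$ (and similarly for the analogue involving $R$), and the resulting pointwise implicit equation $b_3(\log u - b_1)=-b_2 u$ yields the function $K$ so that $\exp(u_i^*)=K(\xi_i,\int_0^1 Y_i(u)R(u)/S(u)\,du,r/(b\sqrt{\delta}))$. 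Substituting back and computing $n^{-1}G_n(\bfu^*)$ gives the limiting objective $M(\kappa a,b,b\sqrt{\delta}/r)$ displayed in the theorem.

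Equations (\ref{neq-1})--(\ref{neq-3}) are then the stationarity conditions of the scalar AO with respect to $a$, $b$, and $r$ respectively: the left-hand sides become the three partial derivatives of $M$, while the right-hand sides arise from the limits of $n^{-1}\kappa^{-1}\sum\Delta_i\bfX_i^\top\bbeta^*$, $n^{-1}\|\bv\|^2$, and $n^{-1}\sum\Delta_i$, identified via Assumption A3. The principal obstacle is rigorously justifying the collapse of the coupled KKT system to the pointwise fixed-point equation for $K$: because $G_n$ couples coordinates through the at-risk set, one must establish that the finite-sample KKT system is well approximated by the decoupled fixed-point system uniformly over the relevant range of $\bxi$, which demands a stochastic equicontinuity argument and a careful exploitation of A4 on a neighborhood of the optimizer. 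Once this piece is in place, the usual strict-convexity and Lipschitz arguments that accompany CGMT transfer the conclusions from the AO back to the PO.
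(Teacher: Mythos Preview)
Your proposal follows the same CGMT-based route as the paper and correctly identifies the main ingredients: the slack variable $\bfu$ with Lagrange multiplier $\bfv$, the decomposition of $\bbeta$ along and orthogonal to $\bbeta^*$, concentration of directions, the Moreau envelope analysis of $G_n$, the decoupling of the KKT system via Assumption~A4 and the function $K$, and the limits in Assumption~A3 that produce the right-hand sides.

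There is, however, one genuine gap in the transition you describe. After concentrating out the direction of $\bfv$ (setting $r=\|\bfv\|$), the AO contains the term $r\cdot n^{-1}\|\bfu-\kappa a\bfq-b\bfh\|$, i.e.\ an $\ell_2$-norm, not a squared norm. Your assertion that ``the remaining inner minimization over $\bfu$ is exactly the proximal problem defining $\bfu^*$ at $\bxi$ with step $r/(b\sqrt{\delta})$'' therefore does not follow: a proximal problem requires a quadratic penalty, and no such quadratic is present at this stage, nor is there any mechanism to single out the particular step size $b\sqrt{\delta}/r$. The paper closes this gap by the square-root linearization $s_0=\min_{v>0}\{(2v)^{-1}+vs_0^2/2\}$, which introduces a \emph{fourth} scalar $v$ and turns the norm into a genuine quadratic, after which the linear term $-n^{-1}\bDelta^\top\bfu$ can be absorbed by completing the square to yield the drift $\bDelta/(rv)$ in $\bxi$.

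Consequently, the scalar AO has four first-order conditions (for $a,b,v,r$), not three. The conditions for $a$ and $b$ give (\ref{neq-1}) and (\ref{neq-2}) directly. The conditions for $v$ and $r$ are two separate equations; combining them yields the identification $b=1/(v\sqrt{\delta})$, which is what fixes the proximal step to $b\sqrt{\delta}/r$, and substituting this back gives (\ref{neq-3}). So (\ref{neq-3}) is not the stationarity condition with respect to $r$ alone, as you state, but the residual equation after eliminating $v$ from the pair of $v$- and $r$-conditions. Your identification of the right-hand side of (\ref{neq-2}) with $n^{-1}\|\bfv\|^2$ is also slightly off: the $\sqrt{\delta}r$ on the right of (\ref{neq-2}) comes from the $-r\sqrt{\delta}b$ term produced by $\|\bfP^\perp\bfg\|/\sqrt{p}\to 1$, not from the norm of $\bfv$.
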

Let $(a^*,b^*,r^*)$ be the solution to the nonlinear equations (\ref{neq-1})-(\ref{neq-3}). We have the following result connecting $(a^*,b^*)$ with the asymptotic error of the MPLE.
\begin{theorem}\label{thm-error}
Under Assumptions A1-A4, we have
\begin{align*}
&\frac{\|\widehat{\bbeta}- \bbeta^*\|^2}{\|\bbeta^*\|^2}
\rightarrow^p (a^*-1)^2+\frac{ (b^*)^2}{\kappa^2},\\
&\frac{\|\widehat{\bbeta}-a^*\bbeta^*\|^2}{p}\rightarrow^p (b^*)^2.
\end{align*}
\end{theorem}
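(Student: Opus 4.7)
The plan is to apply the Convex Gaussian Min-max Theorem (CGMT) to reduce the primary optimization (PO) defining $\widehat{\bbeta}$ to a scalar auxiliary problem whose optimum is characterized by equations (\ref{neq-1})--(\ref{neq-3}), and then convert concentration of the scalar parameters into concentration of $\|\widehat{\bbeta}-\bbeta^*\|^2$ and $\|\widehat{\bbeta}-a^*\bbeta^*\|^2$. The first step is to decompose $\widehat{\bbeta} = \widehat{a}\,\bbeta^* + \bw$ with $\bw \perp \bbeta^*$, so that
\[
\|\widehat{\bbeta}-\bbeta^*\|^{2} = (\widehat{a}-1)^{2}\|\bbeta^*\|^{2} + \|\bw\|^{2}, \qquad \|\widehat{\bbeta}-a^*\bbeta^*\|^{2} = (\widehat{a}-a^*)^{2}\|\bbeta^*\|^{2} + \|\bw\|^{2}.
\]
Since $\|\bbeta^*\|^{2} = p\kappa^{2}$, both target quantities reduce to proving $\widehat{a}\rightarrow^{p} a^*$ and $\|\bw\|^{2}/p \rightarrow^{p} (b^*)^{2}$.

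Next, I would exploit the rotational invariance of the Gaussian design (Assumption A1): conditional on $\bbeta^*$ and the observed $(Y_i,\Delta_i)$, the component $\bfX_i^\top \bw$ is distributed as $\|\bw\| g_i/\sqrt{p}$ with $g_i\sim N(0,1)$ independent of $\bfX_i^\top\bbeta^*$. This allows the PO in $(a,\bw)$ to be cast in the standard bilinear form $\min_{\bw}\max_{\bfu} \bw^\top\bfA\bfu+\psi(\bw,\bfu)$ required by CGMT, where $\bfA$ is the projection of the Gaussian design onto the orthogonal complement of $\bbeta^*$ and the dual vector $\bfu=(\bfX_1^\top\bbeta,\ldots,\bfX_n^\top\bbeta)^\top$ is introduced via Fenchel duality to disentangle the non-separable $\log\sum$ term. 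The associated AO replaces $\bfA$ by independent Gaussian vectors, and carrying out the inner minimization in $\bfu$ produces the Moreau envelope of $G_n$ at $\bxi=\kappa a\mathbf{q}+b\bfh+\sqrt{\delta}\,b\,\bDelta/r$, whose minimizer is precisely the $\bfu^*$ introduced before the first theorem. After further elementary optimization, the AO collapses to a deterministic scalar problem in $(a,b,r)$, where $b$ tracks $\|\bw\|/\sqrt{p}$ and $r$ is the Lagrange multiplier associated with the $\bfu$-constraint.

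Using Assumption A4 together with A3, I would then show that this scalar AO objective converges in probability, uniformly on compact subsets of $(a,b,r)$, to the function $M(\kappa a,b,b\sqrt{\delta}/r)$. The limiting problem is jointly convex-concave, and setting its gradient to zero in $(a,b,r)$ yields precisely equations (\ref{neq-1})--(\ref{neq-3}); consequently the AO optimizer concentrates on the unique solution $(a^*,b^*,r^*)$. The deviation side of CGMT then transfers this concentration back to the PO: for any open neighborhood of $(a^*,(b^*)^2)$ in the $(\widehat{a},\|\bw\|^2/p)$-plane, any $\bbeta$ whose projection onto this plane falls outside the neighborhood yields a primary objective strictly below the optimum with probability tending to one. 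This gives $\widehat{a}\rightarrow^{p} a^*$ and $\|\bw\|^{2}/p\rightarrow^{p} (b^*)^{2}$. Substituting into the two Pythagorean identities above and dividing by $\|\bbeta^*\|^{2}=p\kappa^{2}$ (respectively $p$) yields both limits.

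The main obstacle lies in the non-separability handled in the second step. Unlike in M-estimation or logistic regression, the risk-set aggregation $\log\sum_{j\in\mathcal{R}(Y_i)}\exp(\bfX_j^\top\bbeta)$ couples summands through the order statistics of $\{Y_i\}$, so the AO cannot be collapsed to an i.i.d.\ scalar problem but only to a process-level problem indexed by $t\in[0,1]$ through the fixed-point equation defining $S(t)$. Converting the pointwise convergence in Assumption A4 to the uniform convergence over compact $(a,b,r)$ required by the CGMT deviation argument, and showing that the fixed-point equation for $S(t)$ admits a unique positive solution so that $M$ is a well-defined, smooth, strongly convex-concave function of $(a,b,r)$, will require careful tightness and monotonicity arguments inherited from the convexity of the partial log-likelihood.
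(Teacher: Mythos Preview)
Your proposal is correct and follows essentially the same route as the paper: decompose $\widehat{\bbeta}$ along $\bbeta^*$ and its orthogonal complement, introduce the auxiliary variable $\bfu=\bfH\bbeta/\sqrt{p}$ via a Lagrange multiplier to produce the bilinear Gaussian term, apply CGMT to pass to the AO, collapse the AO to the scalar problem in $(a,b,r)$ using the Moreau envelope of $G_n$ and Assumptions A3--A4, and finally invoke the deviation side of CGMT (part (iv) of Theorem~\ref{thm1}) on the set $\mathcal{S}=\{\bbeta:|\,\|\bbeta-\bbeta^*\|^2/\|\bbeta^*\|^2-(a^*-1)^2-(b^*)^2/\kappa^2\,|\le\epsilon\}$ to transfer concentration of $(a^*,b^*)$ back to $\widehat{\bbeta}$. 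One small terminological slip: the vector $\bfu$ you describe is the \emph{primal} auxiliary variable, not the dual one---the CGMT bilinear term is $\bfv^\top\bfH\bfP^\perp\bbeta$ with $\bfv$ the Lagrange multiplier for the constraint $\bfu=\bfH\bbeta/\sqrt{p}$, but this does not affect the validity of your outline.
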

The proof of Theorem \ref{thm-error} relies on showing that
\begin{equation}\label{eq-ab}
\widehat{\vbeta}^\top \vbeta^*/\|\vbeta^*\|^2\rightarrow^p a^*,\quad  \|\bfP^\perp\vbeta^*\|/\sqrt{p}\rightarrow^p b^*.   
\end{equation}
In other words, $a^*\|\vbeta^*\|$ measures the projection of the MPLE onto the direction of the true parameter $\vbeta^*$, and $\sqrt{p}b^*$ is approximately the norm of the projection of the MPLE onto the space spanned by the columns of $\bfP^\perp$.
We conduct a numerical study to verify (\ref{eq-ab}) by
following the same data generating mechanism considered in Section \ref{sec:finite-exist}.
Fixing $n=500$, we vary $\delta$ from 0.1 to 0.4 and $\kappa$ from $1$ to 6. Denote by $\hat{a}= \widehat{\vbeta}^\top \vbeta^*/\|\vbeta^*\|^2$ and $\hat{b}= \|\widehat{\vbeta}-\hat{a}\vbeta^*\|/\sqrt{p}$.
We obtain $(a^*,b^*)$ by finding an approximate solution to the nonlinear equations (\ref{neq-1})-(\ref{neq-3}). See Section \ref{sec:error-analysis} in the supplementary material for the details. As seen from Figure~\ref{fig:abplot_500}. $\hat{a}$ and $\hat{b}$ are quite consistent with their theoretical values $a^*$ and $b^*$ in all cases.

\begin{figure}[ht!]
    \centering
    \includegraphics[width = \textwidth]{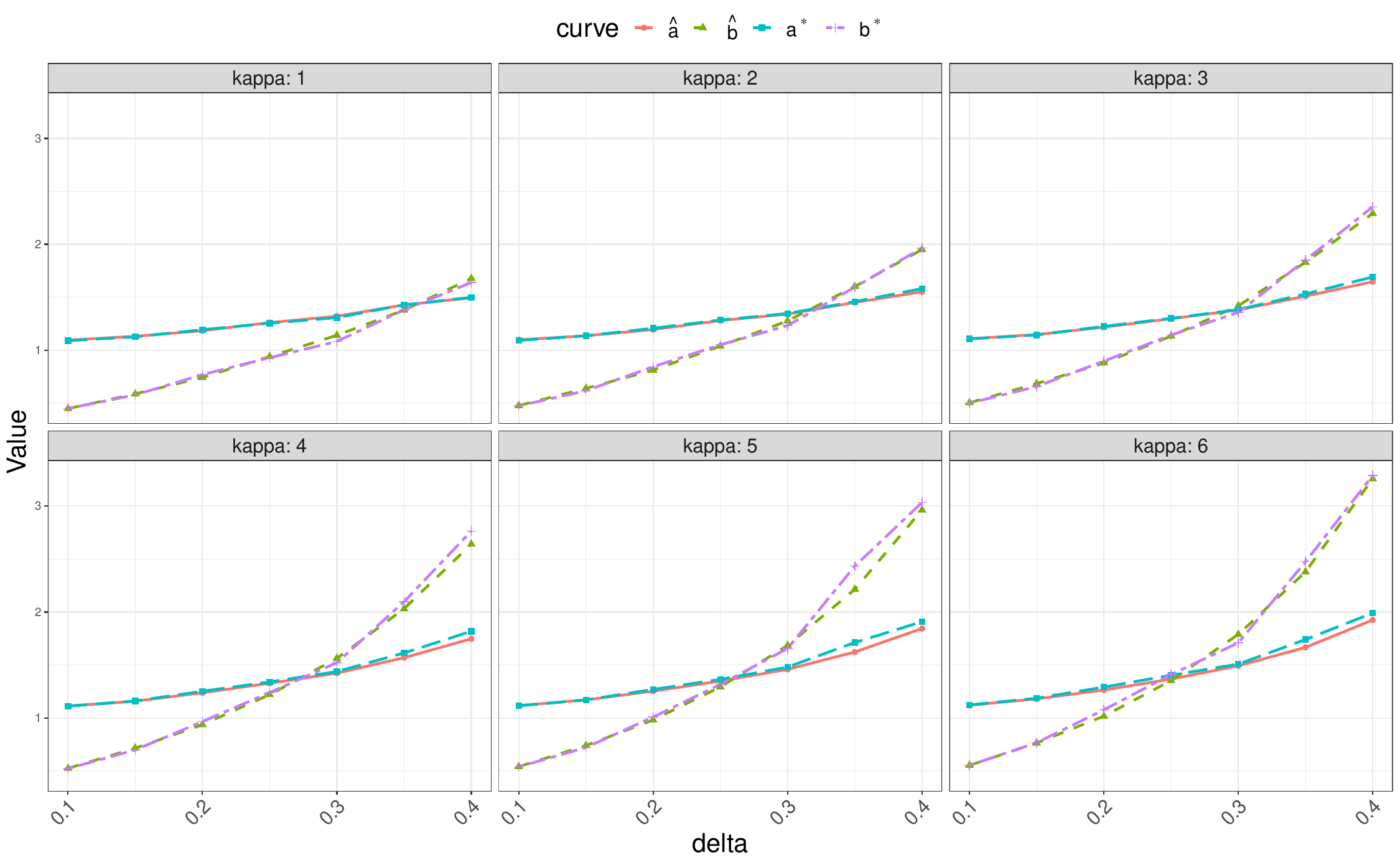}
    \caption{Comparison between $(\hat{a},\hat{b})$ and $(a^*,b^*)$ for various values of $\delta$ and $\kappa$, where $n=500$ and the number of replications is 100.}
    \label{fig:abplot_500}
\end{figure}

\begin{rem}\label{rm-err}
{\rm
Let $\bfg\sim N(0,\mathbf{I}_p)$ be independent of other random quantities. Define $\bfP=\bbeta^*\bbeta^{*\top}/\|\bbeta^*\|^2$ and 
$\bfP^\perp=\mathbf{I}_p-\bfP.$ From the derivations in the analysis of the AO in Section \ref{sec:error-analysis}, we know that 
\begin{align*}
\widehat{\bbeta}=\bfP\widehat{\bbeta}+\bfP^\perp\widehat{\bbeta}
\approx a^*\bbeta^*+ \frac{\bfP^\perp\widehat{\bbeta}}{\|\bfP^\perp\widehat{\bbeta}\|}\|\bfP^\perp\widehat{\bbeta}\|
\approx a^*\bbeta^*+ \frac{\bfP^\perp\bfg}{\|\bfP^\perp\bfg\|}\|\bfP^\perp\widehat{\bbeta}\| 
\approx a^*\bbeta^* + b^*\bfP^\perp\bfg,
\end{align*}
where the first approximation is due to $\widehat{\bbeta}^\top \bbeta^*/\|\bbeta^*\|^2\approx a^*$, the second approximation is because of $\bfP^\perp\widehat{\bbeta}/\|\bfP^\perp\widehat{\bbeta}\|\approx \bfP^\perp\bfg/\|\bfP^\perp\bfg\|$ and the third approximation is from the fact that $\|\bfP^\perp\widehat{\bbeta}\|/\sqrt{p}\approx b^*$ and $\|\bfP^\perp\bfg\|/\sqrt{p}\rightarrow^p 1$. 
Suppose the entries of $\bbeta^*$ are drawn independently from a distribution $P_0$.
For a continuous bivariate function $\psi(\cdot,\cdot)$, we expect that
\begin{align*}
\frac{1}{p}\sum^{p}_{j=1}\psi(\widehat{\beta}_j-a^*\beta^*_j,\beta^*_j)
\approx \frac{1}{p}\sum^{p}_{j=1}\psi((b^*\bfP^\perp\bfg)_j,\beta^*_j)\rightarrow^p E[\psi(b^*Z,\beta_0)],
\end{align*}
where $(b^*\bfP^\perp\bfg)_j$ denotes the $j$th component of $b^*\bfP^\perp\bfg$, $Z\sim N(0,1)$ and $\beta_0 \sim P_0$.
}
\end{rem}



\subsection{Asymptotic distributions}
In this section, we derive the asymptotic distribution of the MPLE. Let $\mathcal{S}_0$ be the set of the null components, i.e., $\mathcal{S}_0=\{1\leq j\leq p:\beta_j^*=0\}.$ 
\begin{theorem}\label{thm-test}
Suppose $\mathcal{S}\subseteq \mathcal{S}_0:=\{1\leq j\leq p:\beta_j^*=0\}$ and $|\mathcal{S}|=l$ is fixed in the asymptotics. Under Assumptions A1-A4, we have
\begin{align*}
\frac{\widehat{\bbeta}_{\mathcal{S}}}{b^*}\rightarrow^d N(0,\mathbf{I}_l),   
\end{align*}
where $\widehat{\bbeta}_{\mathcal{S}}=\{\widehat{\beta}_{j}:j\in\mathcal{S}\}$.
As a consequence,
\begin{align*}
\sum_{j\in \mathcal{S}}\left(\frac{\hat{\beta}_j}{b^*}\right)^2 \rightarrow^d \chi^2_l.    
\end{align*}
\end{theorem}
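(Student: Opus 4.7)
The plan is to combine the rotational invariance of the iid Gaussian design (Assumption~A1) with the norm estimate $\|\bfP^\perp\widehat{\bbeta}\|/\sqrt{p}\to^p b^*$ from Theorem~\ref{thm-error}. Write $\bbeta^{*\perp}$ for the hyperplane orthogonal to $\bbeta^*$. The key observation, driven by the hypothesis $\mathcal{S}\subseteq\mathcal{S}_0$, is that for each $j\in\mathcal{S}$ we have $\beta_j^*=0$, which gives both $e_j\in\bbeta^{*\perp}$ and $(\bfP\widehat{\bbeta})_j=\beta_j^*(\bbeta^{*\top}\widehat{\bbeta})/\|\bbeta^*\|^2=0$; hence $\widehat{\beta}_j=(\bfP^\perp\widehat{\bbeta})_j$ coordinate-wise on $\mathcal{S}$, so the theorem reduces to studying $(\bfP^\perp\widehat{\bbeta})_{\mathcal{S}}$.

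Next I would invoke rotational invariance. Let $\bfR\in\mathbb{R}^{p\times p}$ be any orthogonal matrix with $\bfR\bbeta^*=\bbeta^*$. Since $\bfX$ has iid $N(0,1/p)$ entries, $\bfX\bfR^\top\stackrel{d}{=}\bfX$; since $\bfR^\top\bbeta^*=\bbeta^*$, the linear predictors $\bfX\bbeta^*$ (and therefore $(Y_i,\Delta_i)$) are unchanged by $\bfX\mapsto\bfX\bfR^\top$; and since the partial log-likelihood in (\ref{PL}) depends on the design only through $\bfX\bbeta$, one reads off $\widehat{\bbeta}_{\bfX\bfR^\top}=\bfR\,\widehat{\bbeta}_{\bfX}$, whence $\widehat{\bbeta}\stackrel{d}{=}\bfR\,\widehat{\bbeta}$. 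As $\bfR$ ranges over the full orthogonal subgroup fixing $\bbeta^*$, this forces $\bfP^\perp\widehat{\bbeta}$ to be rotationally invariant inside $\bbeta^{*\perp}$: identifying $\bbeta^{*\perp}$ with $\mathbb{R}^{p-1}$,
\[
\bfP^\perp\widehat{\bbeta}\stackrel{d}{=}\|\bfP^\perp\widehat{\bbeta}\|\cdot\mathbf{U},
\]
where $\mathbf{U}$ is uniform on the unit sphere of $\mathbb{R}^{p-1}$ and independent of $\|\bfP^\perp\widehat{\bbeta}\|$. Restricting to the $l$ coordinates in $\mathcal{S}$ yields $\widehat{\bbeta}_{\mathcal{S}}\stackrel{d}{=}\|\bfP^\perp\widehat{\bbeta}\|\cdot\mathbf{U}_{\mathcal{S}}$.

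The rest is a short Slutsky argument. Writing $\mathbf{U}=\bfg/\|\bfg\|$ for $\bfg\sim N(0,\mathbf{I}_{p-1})$ and using $\|\bfg\|/\sqrt{p-1}\to^p 1$ gives $\sqrt{p-1}\,\mathbf{U}_{\mathcal{S}}\to^d N(0,\mathbf{I}_l)$, while Theorem~\ref{thm-error} produces $\|\bfP^\perp\widehat{\bbeta}\|/(b^*\sqrt{p-1})\to^p 1$. Combining yields $\widehat{\bbeta}_{\mathcal{S}}/b^*\to^d N(0,\mathbf{I}_l)$, and the $\chi^2_l$ limit for $\sum_{j\in\mathcal{S}}(\widehat{\beta}_j/b^*)^2$ follows from the continuous mapping theorem applied to $\bfv\mapsto\|\bfv\|^2$. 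The main hurdle will be the rotational-invariance step: one must confirm that the MPLE is almost surely unique so that the equivariance $\widehat{\bbeta}_{\bfX\bfR^\top}=\bfR\widehat{\bbeta}_{\bfX}$ is well defined (this lives in the phase $\delta<h(\lambda_0,\kappa,P_\mathcal{C})$ delivered by Theorem~\ref{thm-1}), and then translate the compact group action of $\{\bfR:\bfR\bbeta^*=\bbeta^*\}$ on the law of $\widehat{\bbeta}$ into the joint statement that $\|\bfP^\perp\widehat{\bbeta}\|$ and $\bfP^\perp\widehat{\bbeta}/\|\bfP^\perp\widehat{\bbeta}\|$ are independent with the latter uniform; this is standard but deserves careful bookkeeping. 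Outside of that, CGMT enters only through Theorem~\ref{thm-error}; the rest is Gaussian symmetry.
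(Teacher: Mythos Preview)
Your proposal is correct and follows essentially the same strategy as the paper: rotational invariance of the Gaussian design plus concentration of the norm $\|\bfP^\perp\widehat{\bbeta}\|$, combined via Slutsky. The paper's execution is slightly different in its bookkeeping: it first re-runs the CGMT transfer (Theorem~\ref{thm1}(iv)) to establish $\|\widehat{\bbeta}_{\mathcal{S}_0}\|^2/|\mathcal{S}_0|\to^p (b^*)^2$ for the null coordinates, and then appeals to rotational symmetry \emph{among the null coordinates} (in the style of Theorem~3 of \citet{SurModern2019}) to get $\widehat{\bbeta}_{\mathcal{S}}/\|\widehat{\bbeta}_{\mathcal{S}_0}\|\stackrel{d}{=}\mathbf{Z}_{\mathcal{S}}/\|\mathbf{Z}\|$ with $\mathbf{Z}\sim N(0,\mathbf{I}_{|\mathcal{S}_0|})$. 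Your route is more direct: you invoke the larger symmetry group of \emph{all} rotations fixing $\bbeta^*$, which lets you read off the uniform direction of $\bfP^\perp\widehat{\bbeta}$ in the full $(p-1)$-dimensional hyperplane and use the already-proved norm limit $\|\bfP^\perp\widehat{\bbeta}\|/\sqrt{p}\to^p b^*$ from Theorem~\ref{thm-error} (equation~(\ref{eq-ab})) without a fresh CGMT step. This avoids the implicit requirement that $|\mathcal{S}_0|\to\infty$ needed for the paper's concentration of $\|\widehat{\bbeta}_{\mathcal{S}_0}\|$, so your argument is both shorter and marginally more general.
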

The above theorem shows that the MPLE of the null coefficients scaled by the constant $b^*$ converges to 
a multivariate normal distribution with the identity covariance matrix and hence the Wald test formed by the sum of squares of the MPLE converges to a chi-square distribution.

Below we conduct a simulation study to demonstrate the practical relevance of the finding in Theorem \ref{thm-test}.
Consider $n=500$, $p=200$ and half of the coordinates of $\vbeta$ are non-zero with $\kappa = 1$. Each non-zero component $\beta_j^*/c_{\kappa}'$ is independently generated from the uniform distribution on $[\kappa  - 1, \kappa+1]$, where the scaling parameter  $ c_{\kappa}'$ is set to $  \sqrt{ 2\kappa^2/(1/3+\kappa^2)}$ to keep the signal strength $\|\vbeta\|/\sqrt{p}$ equal to $\kappa$. We generate $50,000$ independent data sets and fit the Cox regression model to each data set. Figure~\ref{fig:pv-Gaussian-500-50000} (a) presents the two sided p-value $p_i=2\Phi(-|\hat{\beta}_j/b^*|)$ for the first 50 null coordinates of $\vbeta$ (combined over the $50,000$ simulation runs).
We also show the empirical cumulative distribution function (cdf) of $\Phi(\hat{\beta_j}/b^*)$ for a particular null coordinate of $\vbeta$ in Figure~\ref{fig:pv-Gaussian-500-50000} (b).
We observe that the p-values are uniformly distributed and there is a perfect agreement between the empirical cdf and the 45 degree line.

\begin{figure}[ht!]
    \centering
    \begin{subfigure}[t]{0.495\textwidth}
        \includegraphics[width=\textwidth]{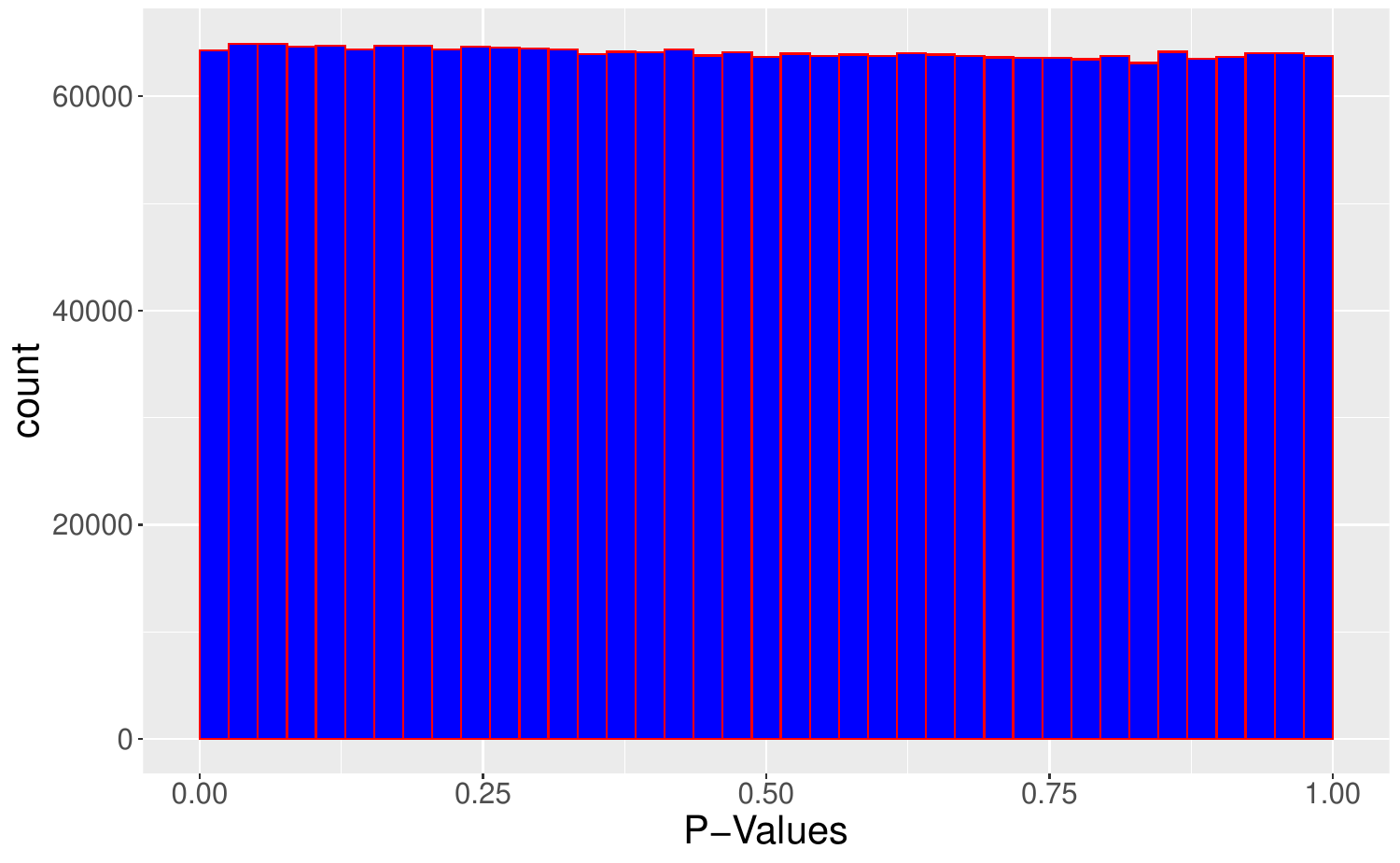}
        \caption{Histogram of the p-values for the first fifty null coordinates of $\vbeta$.}
    \end{subfigure}
    \hfill
    \begin{subfigure}[t]{0.495\textwidth}
        \includegraphics[width = \textwidth]{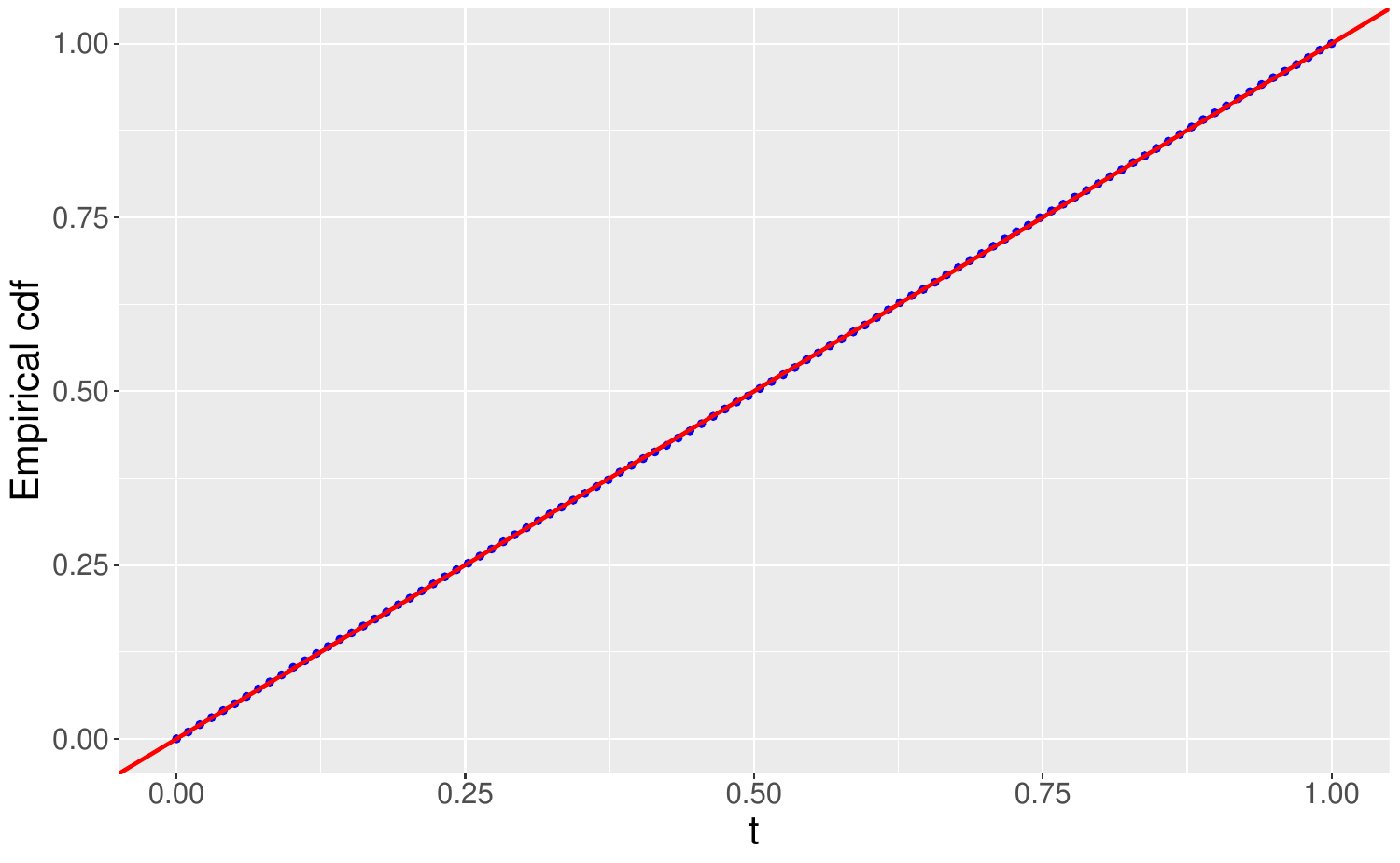}
        \caption{Empirical cdf of $\Phi(\hat{\beta}_j/b^*)$ for a particular null coordinate.}
    \end{subfigure}
    \caption{Histogram and empirical cdf, where $n=500$, $p=200$ and half of the coordinates of $\vbeta$ are non-zero with $\kappa = 1$. The results are based on $50,000$ replications.}
    \label{fig:pv-Gaussian-500-50000}
\end{figure}

Next we examine the chi-square approximation for the quantity $\sum_{j\in \mathcal{S}}(\hat{\beta}_j/b^*)^2$.
Figure \ref{fig:ChiCDF_50000} depicts the histograms for the p-value $p_i=F_l(\sum_{j\in \mathcal{S}}(\hat{\beta}_j/b^*)^2)$, where $\mathcal{S}\subseteq \mathcal{S}_0:=\{1\leq j\leq p:\beta_j^*=0\}$ with $|\mathcal{S}|=l=2,5$ and $F_l$ denotes the cdf for the chi-square distribution with $l$ degrees of freedom. The results suggest that the chi-square approximation is quite accurate. 


\begin{figure}[ht!]
    \centering
    \begin{subfigure}[t]{0.495\textwidth}
    \includegraphics[width = \textwidth]{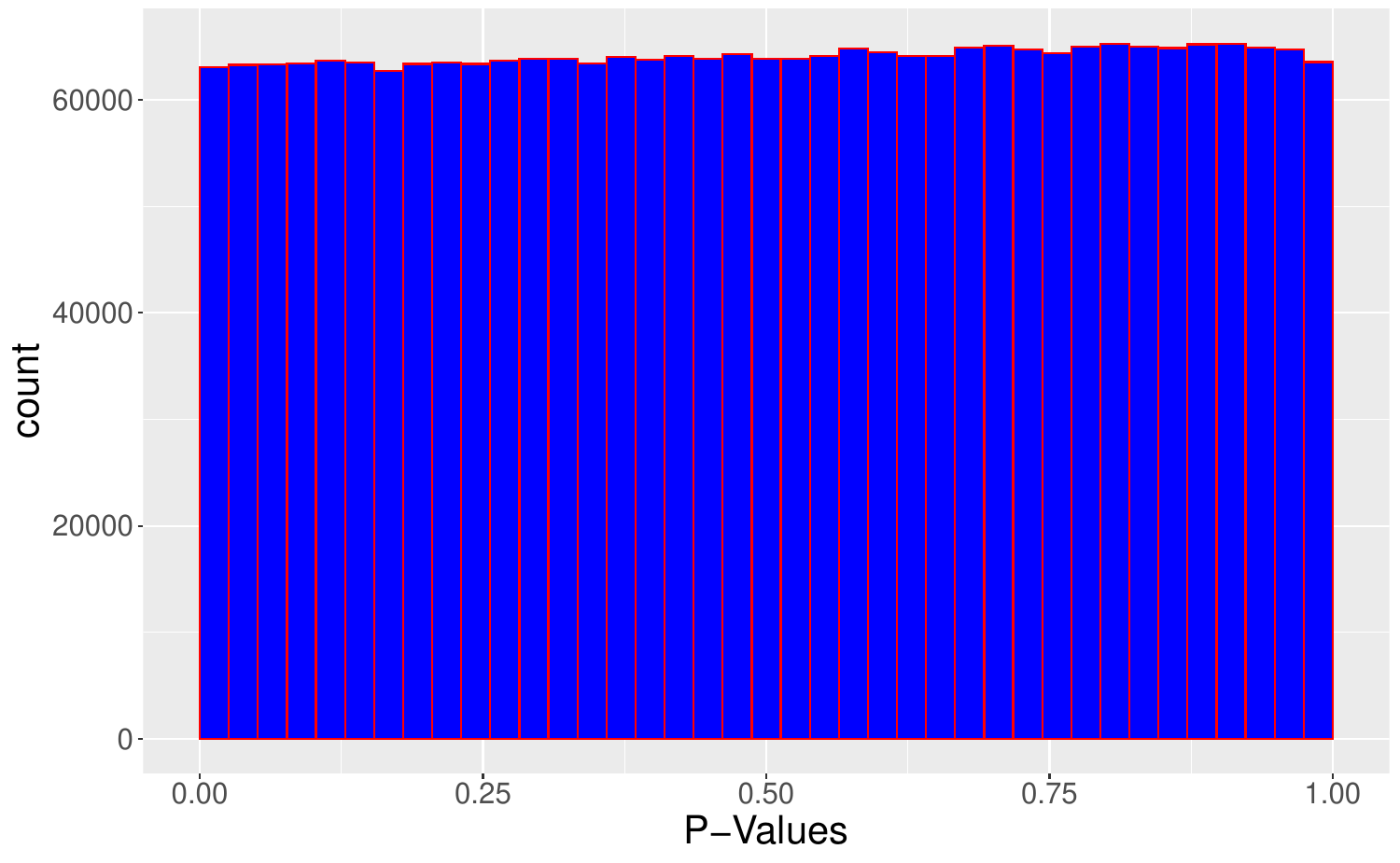}
    \caption{$l=2$}
    \end{subfigure}
    \hfill
    \begin{subfigure}[t]{0.495\textwidth}
    \includegraphics[width = \textwidth]{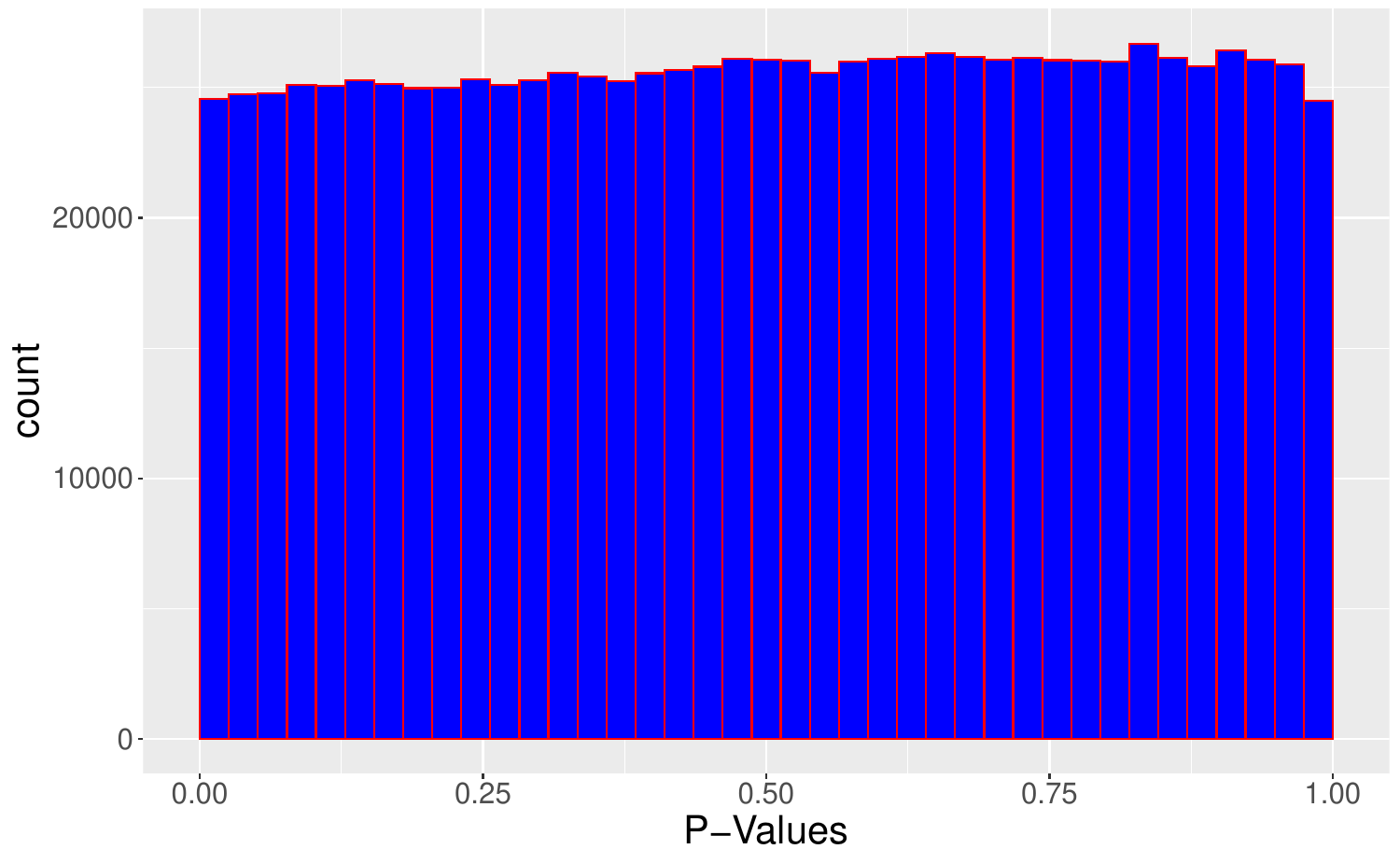}
    \caption{$l=5$}
    \end{subfigure}
    \caption{Histograms for $p_i=F_l(\sum_{j\in \mathcal{S}}(\hat{\beta}_j/b^*)^2)$ based on the chi-square approximation, where $n=500$, $p=200$ and half of the coordinates of $\vbeta$ are non-zero with $\kappa = 1$. The results are based on $50,000$ replications.}
    \label{fig:ChiCDF_50000}
\end{figure}

\section{Conclusion}\label{sec:future}
In this paper, we studied the asymptotic behavior of the MPLE in a high-dimensional Cox regression model with Gaussian covariates. We showed that the extence of the MPLE undergoes a sharp phase transition and we derived the explicit expression for phase transition boundary. In addition, we developed a new theory which gives the asymptotic distributions of the MPLE in the Cox regression model with independent Gaussian covariates. As a byproduct, we also obtained the limiting distribution for the Wald test. Our methods are built on some elements from convex geometry and the CGMT which is a modern version of the Gaussian comparison inequalities.

Finally we mention two future research directions. First, it would be interesting to investigate if the results derived in this paper hold for more general covariate distributions.
Second, it is of interest to study the penalized
regression problem
$\argmin_{\bbeta} L(\bbeta)+\rho(\bbeta),$
for some partial likelihood function $L(\cdot)$ and penalty function $\rho(\cdot)$. We leave these problems as future research topics.

\section*{Funding}
This work was supported by the National Natural Science Foundation of China (12201384 to HZ).

\newpage
\setcounter{section}{0}
\renewcommand{\thesection}{S\arabic{section}}
\setcounter{subsection}{5}
\renewcommand{\thesubsection}{S\arabic{subsection}}
\setcounter{equation}{0}
\renewcommand{\theequation}{S\arabic{equation}}
\setcounter{figure}{0}
\renewcommand{\thefigure}{S\arabic{figure}}
\setcounter{table}{0}
\renewcommand{\thetable}{S\arabic{table}}
\begin{center}
	\Huge{Supplementary Material}
\end{center}	

\section{Convex Gaussian Min-max Theorem}
\begin{defn}[GMT admissible sequence \citep{ThrampoulidisRegularized2015}]
	Let $\mathbf{G}\in\mathbb{R}^{n\times p}$, $\bfh\in\mathbb{R}^n$, $\bfg\in\mathbb{R}^p$, $\mathcal{S}_{\mathbf{w}}\subset\mathbb{R}^p$, $\mathcal{S}_{\mathbf{u}}\subset\mathbb{R}^n$, $\psi:\mathcal{S}_{\mathbf{w}}\times\mathcal{S}_{\mathbf{u}}\to\mathbb{R}$, all indexed by $p$ ($n=n(p)$). The sequence $\{\mathbf{G},\bfg,\bfh,\mathcal{S}_{\mathbf{w}},\mathcal{S}_{\mathbf{u}},\psi\}_{p\in\mathbb{N}}$, where $\mathbb{N}$ denotes the set of positive integers, is said to be admissible if for each $p\in\mathbb{N}$, $\mathcal{S}_{\mathbf{w}}$ and $\mathcal{S}_{\mathbf{u}}$ are compact sets and $\psi$ is continuous on its domain.
\end{defn}

A sequence $\{\mathbf{G},\bfg,\bfh,\mathcal{S}_{\mathbf{w}},\mathcal{S}_{\mathbf{u}},\psi\}_{p\in\mathbb{N}}$ defines a sequence of min-max problems:
\begin{align}
	\Phi(\mathbf{G}):=&\min_{\mathbf{w}\in\mathcal{S}_{\mathbf{w}}}\max_{\mathbf{u}\in\mathcal{S}_{\mathbf{u}}}\mathbf{u}^\top\mathbf{G}\mathbf{w}+\psi(\mathbf{w},\mathbf{u}), \label{eq-phi-G}\\
	\phi(\bfg,\bfh):=&\min_{\mathbf{w}\in\mathcal{S}_{\mathbf{w}}}\max_{\mathbf{u}\in\mathcal{S}_{\mathbf{u}}}\|\mathbf{u}\|_2\bfg^\top\mathbf{w}+\|\mathbf{w}\|_2\bfh^\top\mathbf{u}+\psi(\mathbf{w},\mathbf{u}). \label{eq-phi-g}
\end{align}
They are referred to as the Primary Optimization (PO) and Auxiliary Optimization (AO) problems, respectively. Denote the optimal minimizer
of (\ref{eq-phi-G}) as $\mathbf{w}_{\Phi}(\mathbf{G})$. 
Then the CGMT can be stated as follows.
\begin{theorem}[CGMT \citep{ThrampoulidisRegularized2015}]\label{thm1}
	Let $\{\mathbf{G},\bfg,\bfh,\mathcal{S}_{\mathbf{w}},\mathcal{S}_{\mathbf{u}},\psi\}_{p\in\mathbb{N}}$ be a GMT admissible sequence, for which additionally the entries of  $\mathbf{G}$, $\bfg$ and $\bfh$ are i.i.d. $N(0,1)$. The following four statements hold.\\
	(i) For any $p\in\mathbb{N}$ and $c\in\mathbb{R}$,
	\begin{align*}
		\mathbb{P}\{\Phi(\mathbf{G})<c\}\le 2\mathbb{P}\{\phi(\bfg,\bfh)<c\}.
	\end{align*}
(ii) Fix any $p\in\mathbb{N}$. If $\mathcal{S}_{\mathbf{w}}$, $\mathcal{S}_{\mathbf{u}}$ are convex sets, and $\psi(\cdot,\cdot)$ is convex-concave  (i.e., convex on its first argument and concave on its second argument) on $\mathcal{S}_{\mathbf{w}}\times\mathcal{S}_{\mathbf{u}}$, then, for any $\mu\in\mathbb{R}$ and $t>0$,
\begin{align*}
	\mathbb{P}\left\{|\Phi(\mathbf{G})-\mu|>t\right\}\le 2\mathbb{P}\left\{|\phi(\bfg,\bfh)-\mu|>t\right\}.
\end{align*}
(iii) Let $\mathcal{S}$ be an arbitrary open subset of $\mathcal{S}_{\mathbf{w}}$ and $\mathcal{S}^c=\mathcal{S}_{\mathbf{w}}\setminus \mathcal{S}$. Denote $\Phi_{\mathcal{S}^c}(\mathbf{G})$ and $\phi_{\mathcal{S}^c}(\bfg,\bfh)$ be the optimal
costs of the optimizations in (\ref{eq-phi-G}) and (\ref{eq-phi-g}), respectively, when the minimization over $\mathbf{w}$ is now
constrained over $\mathbf{w}\in\mathcal{S}^c$. If there exist constants $\bar{\phi}$, $\bar{\phi}_{\mathcal{S}^c}$, and $\eta>0$ such that,
\begin{enumerate}
    \item[a] $\bar{\phi}_{\mathcal{S}^c}\geq \bar{\phi}+3\eta$;
    \item[b] $\phi(\bfg,\bfh)<\bar{\phi}+\eta$ with probability at least $1-p$;
    \item[c] $\phi_{\mathcal{S}^c}(\bfg,\bfh)>\bar{\phi}_{\mathcal{S}^c}-\eta$ with probability at least $1-p$;
\end{enumerate}
Then we have $P(\mathbf{w}_{\Phi}(\mathbf{G})\in\mathcal{S})\geq 1-4p$. Here the probabilities are taken with respect to the randomness in $\mathbf{G},\bfg$ and $\bfh$.
\\(iv) Following the notation in (iii), suppose
there exist constants $\bar{\phi}<\bar{\phi}_{\mathcal{S}^c}$ such that 
$\phi(\bfg,\bfh)\rightarrow^p \bar{\phi}$ and $\phi_{\mathcal{S}^c}(\bfg,\bfh)\rightarrow^p \bar{\phi}_{\mathcal{S}^c}$. Then 
$$P(\mathbf{w}_{\Phi}(\mathbf{G})\in\mathcal{S})\rightarrow 1.$$

\end{theorem}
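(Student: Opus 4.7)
The engine for all four parts is Gordon's Gaussian min-max comparison inequality, applied to the two mean-zero Gaussian processes $X_{\mathbf{w},\mathbf{u}} = \mathbf{u}^\top \mathbf{G}\mathbf{w}$ and $Y_{\mathbf{w},\mathbf{u}} = \|\mathbf{u}\|_2\,\bfg^\top \mathbf{w} + \|\mathbf{w}\|_2\,\bfh^\top \mathbf{u}$ indexed by $(\mathbf{w},\mathbf{u})\in \mathcal{S}_{\mathbf{w}}\times \mathcal{S}_{\mathbf{u}}$. A direct covariance computation shows $\mathbb{E}[X_{\mathbf{w},\mathbf{u}}^2]=\|\mathbf{w}\|^2\|\mathbf{u}\|^2=\mathbb{E}[Y_{\mathbf{w},\mathbf{u}}^2]$, together with $\mathbb{E}[(X_{\mathbf{w},\mathbf{u}}-X_{\mathbf{w},\mathbf{u}'})^2]\le \mathbb{E}[(Y_{\mathbf{w},\mathbf{u}}-Y_{\mathbf{w},\mathbf{u}'})^2]$ whenever $\mathbf{w}$ is fixed and $\mathbf{u}\neq \mathbf{u}'$, and the reverse inequality when only $\mathbf{w}$ varies. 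Gordon's lemma then delivers, for every threshold $c$, the one-sided comparison $\mathbb{P}(\Phi(\mathbf{G})\le c)\le 2\,\mathbb{P}(\phi(\bfg,\bfh)\le c)$, which is precisely part (i); the factor of two comes from the standard symmetrization that promotes the Slepian-Gordon expectation comparison to a tail inequality for a constrained min-max value.

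For part (ii) I would exploit convexity via Sion's minimax theorem: compactness of $\mathcal{S}_{\mathbf{w}},\mathcal{S}_{\mathbf{u}}$ together with convex-concavity of $\psi$ and bilinearity of $\mathbf{u}^\top\mathbf{G}\mathbf{w}$ yields the exchange $\Phi(\mathbf{G})=\max_{\mathbf{u}\in\mathcal{S}_{\mathbf{u}}}\min_{\mathbf{w}\in\mathcal{S}_{\mathbf{w}}}\{\mathbf{u}^\top\mathbf{G}\mathbf{w}+\psi(\mathbf{w},\mathbf{u})\}$. Applying the Gordon comparison of part (i) to this swapped problem, with the roles of $(\mathbf{w},\bfg)$ and $(\mathbf{u},\bfh)$ symmetrically interchanged, produces the complementary one-sided estimate $\mathbb{P}(\Phi(\mathbf{G})\ge c)\le 2\,\mathbb{P}(\phi(\bfg,\bfh)\ge c)$. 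Combining the two one-sided tail bounds at the pair $c=\mu\pm t$ yields the two-sided deviation bound stated in (ii).

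Part (iii) is a localization argument that fuses the two one-sided inequalities above with the high-probability hypotheses. Applying part (i) to the restricted sequence — with $\mathcal{S}_{\mathbf{w}}$ replaced by the closed set $\mathcal{S}^c$ — gives $\mathbb{P}(\Phi_{\mathcal{S}^c}(\mathbf{G})< \bar{\phi}_{\mathcal{S}^c}-\eta)\le 2\,\mathbb{P}(\phi_{\mathcal{S}^c}(\bfg,\bfh)< \bar{\phi}_{\mathcal{S}^c}-\eta)\le 2p$ by assumption (c); crucially, convexity of $\mathcal{S}^c$ is not required, since (i) imposes no such hypothesis. The upper-tail one-sided estimate derived en route to part (ii), applied to the unrestricted problem, gives $\mathbb{P}(\Phi(\mathbf{G})> \bar{\phi}+\eta)\le 2\,\mathbb{P}(\phi(\bfg,\bfh)>\bar\phi+\eta)\le 2p$ by assumption (b). On the intersection of these events, of probability at least $1-4p$, the separation assumption (a) forces the chain $\Phi(\mathbf{G})\le \bar{\phi}+\eta< \bar{\phi}_{\mathcal{S}^c}-\eta\le \Phi_{\mathcal{S}^c}(\mathbf{G})$, which rules out the minimizer lying in $\mathcal{S}^c$; hence $\mathbf{w}_\Phi(\mathbf{G})\in\mathcal{S}$. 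Part (iv) is then immediate by choosing $\eta$ small enough that $\bar{\phi}+\eta<\bar{\phi}_{\mathcal{S}^c}-\eta$ and letting the tail-probability parameter tend to zero along the subsequence dictated by the stated convergence in probability of $\phi$ and $\phi_{\mathcal{S}^c}$.

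The principal technical obstacle is the convexity-based swap at the heart of part (ii): one must verify that Sion's theorem applies under only the stated compactness and convex-concavity hypotheses, and that after the symmetric re-indexing — which swaps the $p$- and $n$-dimensional Gaussian vectors $\bfg$ and $\bfh$ — Gordon's lemma still produces a bound with the same factor of two and that the resulting AO coincides, up to sign and min-max ordering, with $\phi(\bfg,\bfh)$ itself. A secondary subtlety in (iii) is the asymmetric use of the two inequalities: the lower-tail control on $\Phi_{\mathcal{S}^c}$ is obtained from (i) on a possibly non-convex restricted set, while the upper-tail control on $\Phi$ relies on the convex half of (ii); the bookkeeping must make this asymmetry explicit so that convexity of $\mathcal{S}^c$ is never inadvertently invoked.
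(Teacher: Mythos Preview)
The paper does not supply its own proof of this theorem: it is stated in the supplement as a quotation of the CGMT from \citet{ThrampoulidisRegularized2015} and used as a black box throughout. There is therefore no in-paper argument to compare against. Your sketch follows precisely the route taken in the cited source: Gordon's Gaussian comparison for the one-sided bound (i), a Sion-type min--max swap under convex--concavity to obtain the reverse one-sided bound and hence (ii), and then the localization in (iii)--(iv) by combining the lower-tail control of $\Phi_{\mathcal{S}^c}$ from (i) with the upper-tail control of $\Phi$ from the convex half of (ii). That is the standard proof, and your identification of the two delicate points---the swap and the asymmetric use of convexity in (iii)---is accurate. One small caveat: as stated here, part (iii) does not explicitly repeat the convex--concavity hypotheses of (ii), but your argument (and the original) needs them for the upper-tail step on the unrestricted problem; you should make that dependence explicit rather than leave it implicit in ``the upper-tail one-sided estimate derived en route to part (ii).''
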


\section{A useful result from convex analysis}
Let $\mathcal{C}$ be a non-empty subset of $\mathbb{R}^n$. The polar cone of $\mathcal{C}$, denoted by $\mathcal{C}^*$, is defined as
\begin{align*}
\mathcal{C}^*=\{\mathbf{c}^*\in\mathbb{R}^n: \langle \mathbf{c}^*,\mathbf{c}\rangle\leq 0\text{ for all }\mathbf{c}\in\mathcal{C}\}.    
\end{align*}
We state the following result from the classical convex analysis. 
\begin{proposition}\citep{Moreau1962}\label{prop1}
Suppose $\mathcal{C}$ is a closed convex cone. For $\bfh\in\mathbb{R}^n$, let $\Pi_\mathcal{C}(\bfh)$ be the projection of $\bfh$ onto $\mathcal{C}$. Then we have the decomposition
\begin{align*}
    \bfh = \Pi_\mathcal{C}(\bfh) + \Pi_{\mathcal{C}^*}(\bfh),\quad \langle \Pi_\mathcal{C}(\bfh),\Pi_{\mathcal{C}^*}(\bfh)\rangle=0.    
    \end{align*}
As a consequence, $\langle \bfh- \Pi_{\mathcal{C}}(\bfh),\Pi_{\mathcal{C}}(\bfh)\rangle=0$ and hence $\|\bfh\|^2=\|\Pi_{\mathcal{C}}(\bfh)\|^2+\|\bfh-\Pi_{\mathcal{C}}(\bfh)\|^2.$
\end{proposition}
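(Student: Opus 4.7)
The plan is as follows. First I would characterize $\mathbf{p}:=\Pi_{\mathcal{C}}(\bfh)$ through the standard variational inequality for projection onto a closed convex set, exploit the conic structure of $\mathcal{C}$ to upgrade the inequality to an orthogonality equality, and then verify that the residual $\bfh-\mathbf{p}$ satisfies the analogous variational characterization of projection onto $\mathcal{C}^*$.

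Existence and uniqueness of $\mathbf{p}$ follow from the fact that the strongly convex map $\mathbf{c}\mapsto\|\bfh-\mathbf{c}\|^2$ admits a unique minimizer on the closed convex set $\mathcal{C}$. The first-order optimality condition then reads $\langle \bfh-\mathbf{p},\mathbf{c}-\mathbf{p}\rangle\le 0$ for every $\mathbf{c}\in\mathcal{C}$. Because $\mathcal{C}$ is a cone, both $\mathbf{0}$ and $2\mathbf{p}$ lie in $\mathcal{C}$; substituting these two admissible test points in turn yields $\langle \bfh-\mathbf{p},\mathbf{p}\rangle \le 0$ and $\langle \bfh-\mathbf{p},-\mathbf{p}\rangle\le 0$, so $\langle \bfh-\mathbf{p},\mathbf{p}\rangle = 0$. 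This already establishes $\langle \bfh-\Pi_{\mathcal{C}}(\bfh),\Pi_{\mathcal{C}}(\bfh)\rangle = 0$. Subtracting this equality back into the variational inequality gives $\langle \bfh-\mathbf{p},\mathbf{c}\rangle\le 0$ for every $\mathbf{c}\in\mathcal{C}$, i.e.\ $\mathbf{q}:=\bfh-\mathbf{p}\in\mathcal{C}^*$ by definition of the polar cone.

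To finish I would identify $\mathbf{q}$ with $\Pi_{\mathcal{C}^*}(\bfh)$. Observe that $\mathcal{C}^*$ is itself a closed convex set (it is an intersection of closed half-spaces indexed by $\mathcal{C}$), so the projection onto $\mathcal{C}^*$ is uniquely characterized by its own variational inequality. For arbitrary $\mathbf{c}^*\in\mathcal{C}^*$,
$\langle \bfh-\mathbf{q},\mathbf{c}^*-\mathbf{q}\rangle = \langle \mathbf{p},\mathbf{c}^*\rangle - \langle \mathbf{p},\mathbf{q}\rangle = \langle \mathbf{p},\mathbf{c}^*\rangle \le 0$,
where the middle equality uses $\langle \mathbf{p},\mathbf{q}\rangle = 0$ proved above, and the final inequality is the defining property of $\mathcal{C}^*$ applied to $\mathbf{p}\in\mathcal{C}$. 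Uniqueness of the projection then gives $\mathbf{q} = \Pi_{\mathcal{C}^*}(\bfh)$, which yields the decomposition $\bfh = \Pi_{\mathcal{C}}(\bfh)+\Pi_{\mathcal{C}^*}(\bfh)$ with orthogonal components, and the norm identity $\|\bfh\|^2=\|\Pi_{\mathcal{C}}(\bfh)\|^2+\|\bfh-\Pi_{\mathcal{C}}(\bfh)\|^2$ is immediate from the Pythagorean theorem.

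The argument is classical and there is no real obstacle; the only point requiring mild care is the step that upgrades the projection variational inequality to the equality $\langle \bfh-\mathbf{p},\mathbf{p}\rangle = 0$, where it is \emph{crucial} that $\mathcal{C}$ is a cone (rather than merely convex) so that both $2\mathbf{p}$ and $\mathbf{0}$ are admissible feasible points.
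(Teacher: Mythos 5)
Your proof is correct and complete. The paper does not actually prove this proposition---it is stated as a classical fact with a citation to Moreau (1962)---so there is no in-paper argument to compare against; your derivation (projection variational inequality, upgraded to the orthogonality $\langle \bfh-\mathbf{p},\mathbf{p}\rangle=0$ via the test points $\mathbf{0}$ and $2\mathbf{p}$, then verification that the residual satisfies the variational characterization of $\Pi_{\mathcal{C}^*}$) is the standard textbook proof of Moreau's decomposition, and every step checks out, including the point you flag about where the cone structure is essential.
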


\begin{proposition}[Polar cone theorem]\label{prop2}
Suppose $\mathcal{C}$ is a closed convex cone. Then
\begin{align*}
    (\mathcal{C}^*)^*=\mathcal{C}.
\end{align*}
\end{proposition}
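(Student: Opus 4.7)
The plan is to prove the two set inclusions $\mathcal{C} \subseteq (\mathcal{C}^*)^*$ and $(\mathcal{C}^*)^* \subseteq \mathcal{C}$ separately. The first inclusion is immediate from unwinding definitions: for any $x \in \mathcal{C}$ and any $y \in \mathcal{C}^*$, the defining property of the polar cone gives $\langle y, x \rangle \leq 0$, which by symmetry of the inner product means $\langle x, y \rangle \leq 0$ for every $y \in \mathcal{C}^*$. This is precisely the statement that $x$ lies in the polar cone of $\mathcal{C}^*$, i.e., $x \in (\mathcal{C}^*)^*$.

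For the reverse inclusion $(\mathcal{C}^*)^* \subseteq \mathcal{C}$, the plan is to leverage Proposition \ref{prop1} (Moreau's decomposition). To apply it to $\mathcal{C}^*$ as well, I would first record a short verification that $\mathcal{C}^*$ is itself a closed convex cone: it is the intersection over $\mathbf{c} \in \mathcal{C}$ of the closed half-spaces $\{y : \langle y, \mathbf{c} \rangle \leq 0\}$, each of which is a closed convex cone, so $\mathcal{C}^*$ inherits those properties. This lets Moreau decomposition apply cleanly.

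The main step is then as follows. Pick an arbitrary $x \in (\mathcal{C}^*)^*$ and apply Proposition \ref{prop1} to decompose $x = p + q$ where $p = \Pi_{\mathcal{C}}(x) \in \mathcal{C}$, $q = \Pi_{\mathcal{C}^*}(x) \in \mathcal{C}^*$, and $\langle p, q \rangle = 0$. The key observation is to pair the defining inequality of $(\mathcal{C}^*)^*$ against $q$: since $x \in (\mathcal{C}^*)^*$ and $q \in \mathcal{C}^*$, we have $\langle x, q \rangle \leq 0$. Expanding $\langle x, q \rangle = \langle p, q \rangle + \|q\|^2 = \|q\|^2$, we conclude $\|q\|^2 \leq 0$, forcing $q = 0$ and therefore $x = p \in \mathcal{C}$.

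There is no serious obstacle in this proof once Proposition \ref{prop1} is in hand; the only mildly nontrivial point is recognizing that the right object to pair against $x$ in the bipolar condition is its own polar projection $q$, which produces a self-referential inequality $\|q\|^2 \leq 0$. Without Moreau's decomposition, one would instead need to invoke a hyperplane separation argument: if some $x \in (\mathcal{C}^*)^*$ were not in $\mathcal{C}$, then since $\mathcal{C}$ is closed and convex, a separating hyperplane would produce a $y \in \mathcal{C}^*$ with $\langle y, x \rangle > 0$, contradicting $x \in (\mathcal{C}^*)^*$. The Moreau-based route is cleaner and fits the framework already established in the excerpt.
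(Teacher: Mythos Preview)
Your proof is correct. Note, however, that the paper does not actually supply a proof of Proposition~\ref{prop2}: it is stated as a standard result from convex analysis and used as a black box (for instance, to conclude that $\mathcal{M}^*$ is the polar cone of $\mathcal{M}$ once $\mathcal{M}$ is shown to be the polar cone of $\mathcal{M}^*$). So there is no ``paper's own proof'' to compare against.

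That said, your argument is a clean and self-contained derivation that fits naturally into the paper's framework, since it relies only on Proposition~\ref{prop1} (Moreau's decomposition), which is already stated. The inclusion $\mathcal{C}\subseteq(\mathcal{C}^*)^*$ is immediate from the definition, and your reverse inclusion via decomposing $x=\Pi_{\mathcal{C}}(x)+\Pi_{\mathcal{C}^*}(x)$ and pairing $x$ with its own polar component $q=\Pi_{\mathcal{C}^*}(x)$ to force $\|q\|^2\le 0$ is exactly right. The alternative route you mention---separating a point $x\notin\mathcal{C}$ from the closed convex set $\mathcal{C}$ by a hyperplane and showing the normal vector lies in $\mathcal{C}^*$---is the more commonly cited textbook proof, but your Moreau-based version is arguably more in keeping with the tools the paper has set up.
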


\begin{lemma}\label{lem1}
For two sets $\mathcal{C}_1$ and $\mathcal{C}_2$, we have $(\mathcal{C}_1+\mathcal{C}_2)^*=\mathcal{C}_1^*\cap \mathcal{C}_2^*.$
\end{lemma}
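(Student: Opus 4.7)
The plan is to prove the two inclusions $(\mathcal{C}_1+\mathcal{C}_2)^* \subseteq \mathcal{C}_1^*\cap \mathcal{C}_2^*$ and $\mathcal{C}_1^*\cap \mathcal{C}_2^* \subseteq (\mathcal{C}_1+\mathcal{C}_2)^*$ separately, working directly from the definition of the polar set. In view of the placement of this lemma right after Propositions \ref{prop1}--\ref{prop2}, and the way it will be used in the CGMT analysis of the existence theorem, I will treat $\mathcal{C}_1,\mathcal{C}_2$ as cones (so in particular $0\in\mathcal{C}_i$), which is exactly the setting needed downstream.

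For the $(\supseteq)$ direction, I would take any $v\in\mathcal{C}_1^*\cap\mathcal{C}_2^*$ and any element $c_1+c_2$ of $\mathcal{C}_1+\mathcal{C}_2$ with $c_i\in\mathcal{C}_i$, then use bilinearity of the inner product to write
\begin{equation*}
\langle v, c_1+c_2\rangle = \langle v,c_1\rangle + \langle v,c_2\rangle \le 0 + 0 = 0,
\end{equation*}
where each term is nonpositive by membership of $v$ in $\mathcal{C}_i^*$. This gives $v\in(\mathcal{C}_1+\mathcal{C}_2)^*$.

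For the $(\subseteq)$ direction, I would take any $v\in(\mathcal{C}_1+\mathcal{C}_2)^*$. Since $0\in\mathcal{C}_2$ (as $\mathcal{C}_2$ is a cone), every $c_1\in\mathcal{C}_1$ can be written as $c_1+0\in\mathcal{C}_1+\mathcal{C}_2$, so $\langle v,c_1\rangle\le 0$ for all $c_1\in\mathcal{C}_1$, meaning $v\in\mathcal{C}_1^*$. Swapping the roles of $\mathcal{C}_1$ and $\mathcal{C}_2$ gives $v\in\mathcal{C}_2^*$, hence $v\in\mathcal{C}_1^*\cap\mathcal{C}_2^*$.

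This is essentially a one-line argument in each direction, so there is no real obstacle; the only subtlety worth flagging is the cone assumption needed in the $(\subseteq)$ half, which is implicit from the context of Propositions \ref{prop1}--\ref{prop2} but should be stated to keep the proof airtight. If one wanted to state the lemma for arbitrary sets containing the origin, the same argument goes through verbatim.
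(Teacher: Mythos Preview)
The paper states Lemma~\ref{lem1} without proof, treating it as a standard fact from convex analysis, so there is no ``paper's own proof'' to compare against. Your argument is the standard one and is correct; you are also right to flag that the $(\subseteq)$ direction needs $0\in\mathcal{C}_i$ (automatic for cones), since the identity fails for arbitrary sets not containing the origin---e.g., $\mathcal{C}_1=\{e_1\}$, $\mathcal{C}_2=\{-e_1\}$ gives $(\mathcal{C}_1+\mathcal{C}_2)^*=\mathbb{R}^n$ but $\mathcal{C}_1^*\cap\mathcal{C}_2^*=\{v:v_1=0\}$. Every application of the lemma in the paper (to $\text{span}(\bfV)+\{\bb\le 0\}$ and to $\text{span}(\widetilde{\bfq}_1)+\mathcal{M}$) involves cones, so your proof covers exactly what is needed.
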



\section{Existence of the MLE in logistic regression: a revisit}\label{sec:MLE-logistic}
We revisit the logistic regression and reproduce the results in \citet{CandesPhase2018} using a new argument based on the CGMT. The new argument will be generalized to the Cox model in the next section. \citet{CandesPhase2018} considered the model
\begin{align*}
P(Y_i=1|\bfX_i)=1-P(Y_i=-1|\bfX_i)=\sigma(\beta_0^*+\bfX_i^\top \bbeta^*_1),\quad \sigma(x):=\frac{1}{1+\exp(-x)},\quad \bfX_i\sim N(0,\bSigma),
\end{align*}
where $\beta_0^*\in\mathbb{R}$ and $\bbeta^*_1\in\mathbb{R}^p$. Following their arguments, we have the equivalent model
$$(Y_i,\bfX_i)=^d(y_i,\bfq_i),$$
where 
\begin{align*}
& P(y_i=1|q_{i1})=1-P(y_i=-1|q_{i1})=\sigma(\beta_0+\gamma_0q_{i1}),\\
& (q_{i2},\dots,q_{ip})\sim N(0,\mathbf{I}_{p-1}),\\
& (q_{i2},\dots,q_{ip})\perp(y_i,q_{i1}).
\end{align*}
The MLE does not exist if and only if there exist $b_0\in\mathbb{R}$ and $\bb_1\in\mathbb{R}^p$ such that $(b_0,\bb_1^\top)^\top\neq 0$ and
\begin{align*}
y_i(b_0+\bfq^\top_i \bb_1)\geq 0,
\end{align*}
for all $1\leq i\leq n$. Due to the independence between $(y_i,q_{i1})$ and $(q_{i2},\dots,q_{ip})$, we have $y_i(q_{i2},\dots,q_{ip})\sim N(0,\mathbf{I}_{p-1})$. Let $\mathbf{V}$ be a $n\times 2$ matrix with the $i$th row being $(y_i,y_iq_{i1})$ and $\mathbf{Q}$ be a $n\times (p-1)$ matrix with the $i$th row being $y_i(q_{i2},\dots,q_{ip})\sim N(0,\mathbf{I}_{p-1})$, which is independent of $\mathbf{V}$. For a vector $\bfa=(a_1,\dots,a_p)^\top$, we write $\bfa\geq c$ (or $\bfa\leq c$) if $a_i\geq c$ (or $a_i\leq c$) for all $1\leq i\leq p$. 
To examine the existence of the MLE, we
fix any $\bfu>0$ and
consider the convex optimization problem
\begin{align*}
\max_{-1\leq \bb_1\leq 1,-1\leq \bb_2\leq 1} \bfu^\top (\bfV\bb_1 + \bfQ\bb_2) \quad \text{subject to}   \quad \bfV\bb_1 + \bfQ\bb_2 \geq 0,
\end{align*}
where $\bb_1\in\mathbb{R}^{2}$ and $\bb_2\in\mathbb{R}^{p-1}$. Notice that the MLE exists if and only if the optimal value of the objective function is equal to zero. 
We rewrite the problem in the Lagrangian form as
\begin{align}
\Phi(\mathbf{V},\mathbf{Q})=&\max_{-1\leq \bb_1\leq 1,-1\leq \bb_2\leq 1}\min_{\bfv\geq 0} \bfu^\top(\bfV\bb_1 + \bfQ\bb_2)+\bfv^\top(\bfV\bb_1 + \bfQ\bb_2 ) \label{eq-obj1}
\\=&\min_{\bfv\geq 0}\max_{-1\leq \bb_1\leq 1,-1\leq \bb_2\leq 1} (\bfu+\bfv)^\top\bfV\bb_1 +(\bfu+\bfv)^\top \bfQ\bb_2, \nonumber
\end{align}
where we can switch the order of the maximization and minimization as the objective function in (\ref{eq-obj1}) is concave-convex in its arguments. By the CGMT, we consider an asymptotically equivalent problem of the form
\begin{align*}
\phi(\mathbf{V},\mathbf{g},\mathbf{h})=\min_{\bfv\geq 0}\max_{-1\leq \bb_1\leq 1,-1\leq \bb_2\leq 1} &(\bfu+\bfv)^\top\bfV\bb_1 +\|\bfu+\bfv\| \bfg^\top \bb_2
-\|\bb_2\|\bfh^\top (\bfu+\bfv),
\end{align*}
where $\bfg\in\mathbb{R}^{p-1}$ and $\bfh\in\mathbb{R}^{n}$ both have i.i.d $N(0,1)$ components. 
Taking maximization with respect to the directions of $\bb_1$ and $\bb_2$, we obtain
\begin{align}\label{eq-AO-exist}
\min_{\bfv\geq 0} \max_{\|\bb_1\|,\|\bb_2\|}&\|\bfV^\top (\bfu+\bfv)\|\|\bb_1\| +\|\bfu+\bfv\| \|\bfg\| \|\bb_2\|
-\|\bb_2\|\bfh^\top (\bfu+\bfv). 
\end{align}
We observe two facts: 
\begin{itemize}
    \item[a.] $\Phi(\mathbf{V},\mathbf{Q})\geq 0$ and $\phi(\mathbf{V},\mathbf{g},\mathbf{h})\geq 0$;
    \item[b.] $P(\Phi(\mathbf{V},\mathbf{Q})>0)=P(\text{MLE does not exist})$.
\end{itemize}
Setting $\mu=0$ in (ii) of Theorem \ref{thm1} and using fact (a), we have 
\begin{align}\label{eq-ne}
 P(\Phi(\mathbf{V},\mathbf{Q})>t)\leq 2P(\phi(\mathbf{V},\bfg,\bfh)>t).   
\end{align}
Letting $t\downarrow 0$ and using fact (b), we get
\begin{align}\label{eq-e}
 P(\text{MLE does not exist})=P(\Phi(\mathbf{V},\mathbf{Q})>0)\leq 2P(\phi(\mathbf{V},\bfg,\bfh)>0). 
\end{align}
On the other hand, letting $c\downarrow 0$ in (i) of Theorem \ref{thm1}, we obtain 
\begin{align*}
P(\text{MLE exists})=P(\Phi(\mathbf{V},\mathbf{Q})=0)\leq 2P(\phi(\mathbf{V},\bfg,\bfh)=0).
\end{align*}

Next we introduce some notation. Define
\begin{align*}
\mathcal{C}=\text{span}(\bfV) + \{\bb:\bb\leq 0\}=\{\bfa+\bb:\bfa\in \text{span}(\bfV),\bb\leq 0\},
\end{align*}
where $\text{span}(\bfV)$ is the space spanned by the columns of $\bfV$. Let 
$$\mathcal{C}^*=\text{span}^\perp (\bfV)\cap \{\bb:\bb\geq 0\},$$
where $\text{span}^\perp (\bfV)$ denotes the orthogonal complement of $\text{span}(\bfV)$. Note that $\text{span}^\perp (\bfV)$ is also the polar cone of $\text{span}(\bfV)$ and $\{\bb:\bb\geq 0\}$ is the polar cone of $ \{\bb:\bb\leq 0\}$. Using Lemma \ref{lem1}, $\mathcal{C}^*$ 
is the polar cone of $\mathcal{C}$.
By Proposition \ref{prop1}, we have
\begin{align*}
\|\Pi_{\mathcal{C}^*}(\bfh)\|^2=\|\bfh-\Pi_{\mathcal{C}}(\bfh)\|^2=&\min_{\bfa\in \text{span}(\bfV),\bb\leq 0}\|\bfh-\bfa-\bb\|^2 =   \min_{\bfa\in \text{span}(\bfV)}\|(\bfh-\bfa)_+\|^2.
\end{align*}
As $p/n\rightarrow \delta$, by the laws of large numbers, we have
\begin{align*}
& \frac{1}{n}\|\Pi_{\mathcal{C}^*}(\bfh)\|^2=\frac{1}{n}\min_{\bfa\in \text{span}(\bfV)}\|(\bfh-\bfa)_+\|^2\rightarrow^p \min_{t_1,t_2\in\mathbb{R}}E[(h-t_1y-t_2yq)_+^2],\\
& \frac{1}{n}\|\bfg\|^2\rightarrow^p \delta,
\end{align*}
where $(y,q)$ has the same distribution as that of $(y_i,q_{i1})$. Below we consider two cases.

\textbf{Case 1:} Assuming $\delta>\min_{t_1,t_2\in\mathbb{R}}E[(h-t_1y-t_2yq)_+^2]$, we aim to show that $P(\text{MLE does not exist})\rightarrow 1$. In this case, we have 
\begin{align*}
\frac{1}{\sqrt{n}}\|\Pi_{\mathcal{C}^*}(\bfh)\|< \frac{1}{\sqrt{n}}\|\bfg\|
\end{align*}
with probability tending to one. For the objective function in (\ref{eq-AO-exist}) to be zero, we need to find 
a vector $\bfv\geq 0$ such that $$\bfV^\top (\bfu+\bfv)=0\quad 
\text{and}\quad 
\frac{\bfh^\top(\bfu+\bfv)}{\|\bfu+\bfv\| }\ge \|\bfg\|.$$
However, this event happens with probability tending to zero as when $\bfV^\top (\bfu+\bfv)=0$, we have $\bfu+\bfv\in\mathcal{
C}^*$ and 
\begin{align*}
P\left(\frac{1}{\sqrt{n}}\|\bfg\|>\frac{1}{\sqrt{n}}\|\Pi_{\mathcal{C}^*}(\bfh)\| \geq \frac{1}{\sqrt{n}}\frac{\bfh^\top(\bfu+\bfv)}{\|\bfu+\bfv\| }\right)\rightarrow 1.     
\end{align*}
Therefore, we must have 
$$P(\text{MLE exists})=P(\Phi(\mathbf{V},\mathbf{Q})=0)\leq 2P(\phi(\mathbf{V},\bfg,\bfh)=0)\rightarrow 0,$$
which implies that $P(\text{MLE does not exist})\rightarrow 1.$

\textbf{Case 2:} Assuming $\delta<\min_{t_1,t_2\in\mathbb{R}}E[(h-t_1y-t_2yq)_+^2]$, we show that $P(\text{MLE exists})\rightarrow 1$.
Let $\mathbb{R}^n_+=\{\bb\in\mathbb{R}^n:\bb\geq 0\}$ and $\widetilde{\mathbb{R}}_+^n$ be the interior of $\mathbb{R}^n_+.$ 
By similar arguments as in Lemma 2 of \citet{CandesPhase2018}, we have 
\begin{align}\label{eq-proj}
P(\text{span}^\perp(\mathbf{V}) \cap \widetilde{\mathbb{R}}_+^n\neq \emptyset)\rightarrow 1.
\end{align}
We note that for any $\bfu\in \widetilde{\mathbb{R}}_+^n$, 
\begin{align}\label{eq-direct}
\left\{\frac{\bfu+\bfv}{\|\bfu+\bfv\|}:\bfv\geq 0\right\}=\left\{\frac{\bfv}{\|\bfv\|}:\bfv\in \widetilde{\mathbb{R}}_+^n \right\}.
\end{align}
With probability converging to one, the projection of $\bfh$ onto $\mathcal{C}^*$ is in $\widetilde{\mathbb{R}}_+^n$. Using (\ref{eq-proj}), with high probability, we can find $\widetilde{\bfv}\in  \text{span}^\perp(\mathbf{V})\cap \widetilde{\mathbb{R}}_+^n$ such that  $\|\Pi_{\mathcal{C}^*}(\bfh)\|=\bfh^\top \widetilde{\bfv}/\|\widetilde{\bfv}\|$. By (\ref{eq-proj}) and (\ref{eq-direct}), there exists a $\bfv^*\geq 0$ satisfying that $\bfu+\bfv^*\in \text{span}^\perp(\mathbf{V})\cap \widetilde{\mathbb{R}}_+^n$ and
$$\|\Pi_{\mathcal{C}^*}(\bfh)\|=\frac{\bfh^\top(\bfu+\bfv^*)}{\|\bfu+\bfv^*\| }.$$
Under the assumption that $\delta<\min_{t_1,t_2\in\mathbb{R}}E[(h-t_1y-t_2yq)_+^2]$,
$$\frac{1}{\sqrt{n}}\|\Pi_{\mathcal{C}^*}(\bfh)\|>\frac{1}{\sqrt{n}}\|\bfg\|$$ with probability tending to one, which implies that $\phi(\mathbf{V},\mathbf{g},\mathbf{h})=0$ when $\bfv$ in (\ref{eq-AO-exist}) is chosen to be $\bfv^*$. Therefore, we obtain
$$ P(\text{MLE does not exist})=P(\Phi(\mathbf{V},\mathbf{Q})>0)\leq 2P(\phi(\mathbf{V},\bfg,\bfh)>0)\rightarrow 0,$$
or equivalently $P(\text{MLE exists})\rightarrow 1$.

\section{Existence of the MPLE in Cox regression}
Under the Cox model (\ref{eq-lambda}), the conditional distribution of $Y_i$ given $\bfX_i$ depends on $\bfX_i$ only through a linear combination $\bfX_i^\top \bbeta^*$.
By the rotational invariance of the Gaussian distribution, we can show that the joint distribution of $(Y_i,\bfX^\top_i)=(Y_i,X_{i1},\dots,X_{ip})$ is the same as that of $$(y_i,\bfq_i^\top)=(y_i,q_{i1},\dots,q_{ip}),$$ 
where $y_i=t_i\wedge C_i$ with $t_i$ having the hazard function 
$$\lambda(t|q_{i1})=\lambda_0(t)\exp(\kappa q_{i1})$$ 
and 
\begin{align*}
&(q_{i2},\dots,q_{ip})\sim N(0,\mathbf{I}_{p-1}),\\ &(q_{i2},\dots,q_{ip})\perp(y_i,q_{i1}),
\end{align*}
for $1\leq i\leq n.$ Here $(y_i,\kappa q_{i1})$ has the same distribution as that of $(Y_i,\bfX_i^\top\bbeta^*)$.
Thus we only need to study the existence of the MPLE in the equivalent model.
To this end, we consider the convex optimization problem
\begin{align*}
&\max_{-1\leq \bb\leq 1}\sum_{(i,j)\in\mathcal{D}}a_{ij}\bb^\top(\bfq_i-\bfq_j)\\ 
&\text{subject to} \quad \bb^\top(\bfq_i-\bfq_j)\geq 0 \text{ for all }(i,j)\in\mathcal{D},
\end{align*}
where $a_{ij}> 0$ is fixed throughout the arguments and $\bb=(b_1,\dots,b_p)^\top$. The MPLE exists if and only if the optimal value of the above objective function is equal to zero.  
Define $\widetilde{\bfa}=(\widetilde{a}_1,\dots,\widetilde{a}_n)^\top$ with $\widetilde{a}_i=\sum_{j=1}^n a_{ij}\Delta_i\mathbf{1}\{Y_j\geq Y_i\}$ and
$\breve{\bfa}=(\breve{a}_1,\dots,\breve{a}_n)^\top$ with $\breve{a}_j=\sum_{i=1}^n a_{ij}\Delta_i\mathbf{1}\{Y_j\geq Y_i\}$. Let $\bfQ=(\bfq_1,\dots,\bfq_n)^\top=(\widetilde{\bfq}_1,\bfQ_2)\in\mathbb{R}^{n\times p},$ where $\widetilde{\bfq}_1=(q_{11},\dots,q_{n1})^\top\in\mathbb{R}^n$ and $\bfQ_2\in\mathbb{R}^{n\times (p-1)}$.
Note that
\begin{align}\label{eq-obj2}
\sum_{(i,j)\in\mathcal{D}}a_{ij}\bb^\top(\bfq_i-\bfq_j)
=&\sum_{i=1}^n\sum_{j=1}^n a_{ij}\Delta_i\mathbf{1}\{Y_j\geq  Y_i\}\bb^\top(\bfq_i-\bfq_j)
=(\widetilde{\bfa}-\breve{\bfa})^\top \widetilde{\bfq}_1b_1+(\widetilde{\bfa}-\breve{\bfa})^\top\bfQ_2\bb_2,
\end{align}
for $\bb_2=(b_2,\dots,b_p)^\top.$ By introducing the Lagrangian $\{v_{ij}\}_{(i,j)\in\mathcal{D}}$ and using (\ref{eq-obj2}), we can rewrite the optimization problem as 
\begin{align}
\Phi(\widetilde{\bfq}_1,\mathbf{Q}_2)=&\max_{-1\leq \bb\leq 1}\min_{v_{ij}\geq0}\sum_{(i,j)\in\mathcal{D}}a_{ij}\bb^\top(\bfq_i-\bfq_j)+\sum_{(i,j)\in\mathcal{D}}v_{ij}\bb^\top(\bfq_i-\bfq_j) \nonumber \\
=&\max_{-1\leq b_1\leq 1,-1\leq \bb_2\leq 1}\min_{v_{ij}\geq 0}(\widetilde{\bfa}-\breve{\bfa}+\widetilde{\bfv}-\breve{\bfv})^\top \widetilde{\bfq}_1b_1+(\widetilde{\bfa}-\breve{\bfa}+\widetilde{\bfv}-\breve{\bfv})^\top\bfQ_2\bb_2 \label{eq-obj3}\\
=&\min_{v_{ij}\geq 0}\max_{-1\leq b_1\leq 1,-1\leq \bb_2\leq 1}(\widetilde{\bfa}-\breve{\bfa}+\widetilde{\bfv}-\breve{\bfv})^\top \widetilde{\bfq}_1b_1+(\widetilde{\bfa}-\breve{\bfa}+\widetilde{\bfv}-\breve{\bfv})^\top\bfQ_2\bb_2,\nonumber
\end{align}
where $\widetilde{\bfv}$ and $\breve{\bfv}$ are defined in a similar way as $\widetilde{\bfa}$ and $\breve{\bfa}$ with $a_{ij}$ replaced by $v_{ij}$. Here we switch the order of the maximization and minimization as the objective function in (\ref{eq-obj3}) is concave-convex. Conditional on $\{y_i,q_{i1},\Delta_i\}_{i=1}^n$ and using the CGMT, we consider an asymptotically equivalent AO problem defined as 
\begin{equation}\label{eq-AO-exist2}
\begin{split}
\phi(\widetilde{\mathbf{q}}_1,\mathbf{g},\mathbf{h})=&\min_{v_{ij}\geq 0}\max_{-1\leq b_1\leq 1,-1\leq \bb_2\leq 1}(\widetilde{\bfa}-\breve{\bfa}+\widetilde{\bfv}-\breve{\bfv})^\top \widetilde{\bfq}_1b_1+\|\widetilde{\bfa}-\breve{\bfa}+\widetilde{\bfv}-\breve{\bfv}\| \bfg^\top \bb_2
\\&-\|\bb_2\|\bfh^\top (\widetilde{\bfa}-\breve{\bfa}+\widetilde{\bfv}-\breve{\bfv}),
\end{split}
\end{equation}
where $\bfg\in\mathbb{R}^{p-1}$ and $\bfh\in\mathbb{R}^{n}$ both have i.i.d $N(0,1)$ components that are independent of other random quantities. Taking maximization with respect to the directions of $b_1$ and $\bb_2$, the optimization problem becomes
\begin{align*}
\min_{v_{ij}\geq 0}\max_{\|\bb_2\|\leq \sqrt{p-1}}|(\widetilde{\bfa}-\breve{\bfa}+\widetilde{\bfv}-\breve{\bfv})^\top \widetilde{\bfq}_1|+\|\widetilde{\bfa}-\breve{\bfa}+\widetilde{\bfv}-\breve{\bfv}\|\|\bfg\| \|\bb_2\|-\|\bb_2\|\bfh^\top (\widetilde{\bfa}-\breve{\bfa}+\widetilde{\bfv}-\breve{\bfv}).
\end{align*}

Recall that 
\begin{align*}
\mathcal{M}=\left\{\mathbf{m}=(m_1,\dots,m_n)\in\mathbb{R}^n: \min_{s<i_l}m_{s}\geq m_{i_l}, \max_{j\in D_{i_l}}m_j\leq m_{i_l}, l=1,\dots,k\right\},
\end{align*}
where $D_{i_l}=\{1\leq j<i_l: y_j=y_{i_l},\Delta_j=1\}$. Define the set
\begin{align*}
\mathcal{M}^*=\left\{\mathbf{m}^*=(m_1^*,\dots,m_n^*)\in\mathbb{R}^n: m_i^*=\sum_{j=1}^n \left(c_{ij}\Delta_i\mathbf{1}\{y_j\geq y_i\}- c_{ji}\Delta_j\mathbf{1}\{y_i\geq  y_j\}\right)\text{ for some }c_{ij},c_{ji}\geq 0
\right\},  
\end{align*}
which will be shown to be the polar cone of $\mathcal{M}$. Further let
\begin{align*}
&\mathcal{C}=\text{span}(\widetilde{\bfq}_1) + \mathcal{M}=\{t\widetilde{\bfq}_1+\mathbf{m}: t\in\mathbb{R},\mathbf{m}\in\mathcal{M}\}\quad \text{and}\quad  \mathcal{C}^*=\text{span}^\perp(\widetilde{\bfq}_1)\cap \mathcal{M}^*.
\end{align*}
It is not hard to see that $\mathcal{M}^*$ is a cone. Next we show that $\mathcal{M}$ is the polar cone of $\mathcal{M}^*$.
For any $\mathbf{m}=(m_1,...,m_n)^\top$ in the polar cone of $\mathcal{M}^*$ and $\mathbf{m}^*\in\mathcal{M}^*$, we must have
\begin{align*}
\langle\mathbf{m},\mathbf{m}^* \rangle=&\sum_{1\leq i\neq j\leq n}m_i\left(c_{ij}\Delta_i\mathbf{1}\{y_j\geq  y_i\}- c_{ji}\Delta_j\mathbf{1}\{y_i\geq  y_j\}\right)
\\=&\sum_{1\leq i<j\leq n}(m_i-m_j)\left(c_{ij}\Delta_i\mathbf{1}\{y_j\geq  y_i\}- c_{ji}\Delta_j\mathbf{1}\{y_i\geq  y_j\}\right)\leq 0,
\end{align*}
for any $c_{ij}\ge 0$. Set $c_{kl}=c_{lk}=0$ if $\{k,l\}\neq\{i,j\}$. We have
\begin{align*}
(m_i-m_j)\left(c_{ij}\Delta_i\mathbf{1}\{y_j\geq  y_i\}- c_{ji}\Delta_j\mathbf{1}\{y_i\geq  y_j\}\right)\leq 0.
\end{align*}
We see that
\begin{align*}
\begin{cases}
m_i\leq m_j \quad &\Delta_i\mathbf{1}\{y_j\geq  y_i\}=1,\Delta_j\mathbf{1}\{y_i\geq y_j\}=0,\\
m_i\geq m_j \quad &\Delta_i\mathbf{1}\{y_j\geq  y_i\}=0,\Delta_j\mathbf{1}\{y_i\geq y_j\}=1,\\
m_i=m_j \quad &\Delta_i\mathbf{1}\{y_j\geq  y_i\}=1,\Delta_j\mathbf{1}\{y_i\geq y_j\}=1,\\
\text{no restriction} \quad &\Delta_i\mathbf{1}\{y_j\geq  y_i\}=0,\Delta_j\mathbf{1}\{y_i\geq y_j\}=0,
\end{cases}    
\end{align*}
which implies that $\mathbf{m}\in\mathcal{M}$. On the other hand, it is not hard to see that for any $\mathbf{m}\in\mathcal{M}$ and $\mathbf{m}^*\in\mathcal{M}^*$, $\langle\mathbf{m},\mathbf{m}^* \rangle\leq0$. Thus $\mathcal{M}$ is the polar cone of $\mathcal{M}^*$. 
As $\mathcal{M}^*$ is closed and convex,
by Proposition \ref{prop2}, we obtain that $\mathcal{M}^*$ is the polar cone of $\mathcal{M}$. As $\text{span}^\perp(\widetilde{\bfq}_1)$ is the polar cone of $\text{span}(\widetilde{\bfq}_1)$, using Lemma \ref{lem1}, we have that $\mathcal{C}^*$ is the polar cone of $\mathcal{C}$. Similar to the discussions for logistic regression, we consider two cases.

\textbf{Case 1:} Suppose $\delta>h_U(\lambda_0,\kappa,P_\mathcal{C})$. We show that $P(\text{MPLE does not exist})\rightarrow 1$. By the assumption, we have
\begin{align}\label{eq-ineq-1}
\frac{1}{\sqrt{n}}\|\bfh-\Pi_{\mathcal{C}}(\bfh)\|=\frac{1}{\sqrt{n}}\|\Pi_{\mathcal{C}^*}(\bfh)\|< \frac{1}{\sqrt{n}}\|\bfg\|
\end{align}
with probability tending to one. For the objective function in (\ref{eq-AO-exist2}) to be zero, we need to find $\{v_{ij}^*\}_{(i,j)\in\mathcal{D}}$ such that
\begin{align}\label{eq-con}
&(\widetilde{\bfa}-\breve{\bfa}+\widetilde{\bfv}^*-\breve{\bfv}^*)^\top \widetilde{\bfq}_1=0\quad \text{and}\quad \frac{\bfh^\top (\widetilde{\bfa}-\breve{\bfa}+\widetilde{\bfv}^*-\breve{\bfv}^*)}{\|\widetilde{\bfa}-\breve{\bfa}+\widetilde{\bfv}^*-\breve{\bfv}^*\|}\ge\|\bfg\|.
\end{align}
The definitions of $\widetilde{\bfa},\breve{\bfa},\widetilde{\bfv}^*$ and $\breve{\bfv}^*$ imply that $\widetilde{\bfa}-\breve{\bfa}+\widetilde{\bfv}^*-\breve{\bfv}^*\in \mathcal{M}^*$. Moreover, 
from the first condition in (\ref{eq-con}), we have $\widetilde{\bfa}-\breve{\bfa}+\widetilde{\bfv}^*-\breve{\bfv}^*\in \text{span}^\perp (\widetilde{\bfq}_1)$ and thus $\widetilde{\bfa}-\breve{\bfa}+\widetilde{\bfv}-\breve{\bfv}\in \mathcal{C}^*.$ By (\ref{eq-ineq-1}), we get
\begin{align*}
P\left(\frac{1}{\sqrt{n}}\|\bfg\|>\frac{1}{\sqrt{n}}\|\Pi_{\mathcal{C}^*}(\bfh)\| \geq \frac{1}{\sqrt{n}}\frac{\bfh^\top (\widetilde{\bfa}-\breve{\bfa}+\widetilde{\bfv}^*-\breve{\bfv}^*)}{\|\widetilde{\bfa}-\breve{\bfa}+\widetilde{\bfv}^*-\breve{\bfv}^*\|}\right)\rightarrow 1.    
\end{align*}
Therefore, we must have 
$$P(\text{MLE exists})=P(\Phi(\widetilde{\bfq}_1,\mathbf{Q})=0)\leq 2P(\phi(\widetilde{\mathbf{q}}_1,\bfg,\bfh)=0)\rightarrow 0,$$
which implies that $P(\text{MLE does not exist})\rightarrow 1.$

\textbf{Case 2:} Assuming that $\delta<h_L(\lambda_0,\kappa,P_\mathcal{C})$, we argue that $P(\text{MPLE exists})\rightarrow 1$. Let $\widetilde{\mathcal{M}}^*$ denote the interior of $\mathcal{M}^*$. 
We first claim that
\begin{align}\label{eq-obj4}
P(\text{span}^\perp (\widetilde{\bfq}_1)\cap \widetilde{\mathcal{M}}^*\neq\{{\bf 0}\})\rightarrow 1.
\end{align}
To see this, we note that for any $\mathbf{m}^*\in \widetilde{\mathcal{M}}^*$, 
\begin{align*}
\langle\mathbf{m}^*,\widetilde{\bfq}_1 \rangle=&\sum_{i=1}^n\sum_{j=1}^n q_i\left(c_{ij}\Delta_i\mathbf{1}\{y_j\geq y_i\}- c_{ji}\Delta_j\mathbf{1}\{y_i\geq  y_j\}\right)\\
=&\sum_{j<i}(q_i-q_j)\left(c_{ij}\Delta_i\mathbf{1}\{y_j\geq y_i\}- c_{ji}\Delta_j\mathbf{1}\{y_i\geq  y_j\}\right)\\
=&\sum_{j<i,y_i\neq y_j}(q_i-q_j)c_{ij}\Delta_i+\sum_{j<i,y_i=y_j}(q_i-q_j)\left(c_{ij}\Delta_i- c_{ji}\Delta_j\right).
\end{align*}
We note that the event that all $q_i-q_j$ with $j<i$, $y_i\neq y_j$ and $\Delta_i=1$ have the same sign happens with exponentially small probability. As $c_{ij}$'s are arbitrarily positive, the first term in the last equality can be equal to any value including the negative of the second term with appropriate choice of $c_{ij}$'s. Hence, with high probability, there exists $\mathbf{m}^*\in\widetilde{\mathcal{M}}^*$ such that $\langle\mathbf{m}^*,\widetilde{\bfq}_1 \rangle=0$, which justifies claim (\ref{eq-obj4}).
Also, it is not hard to verify that for any $\bfa\in \widetilde{\mathbb{R}}^n_+$, 
\begin{align}\label{eq-direct1}
\left\{\frac{\widetilde{\bfa}-\breve{\bfa}+\widetilde{\bfv}-\breve{\bfv}}{\|\widetilde{\bfa}-\breve{\bfa}+\widetilde{\bfv}-\breve{\bfv}\|}:\bfv\geq 0\right\}=\left\{\frac{ \widetilde{\bfv}-\breve{\bfv}}{\|\widetilde{\bfv}-\breve{\bfv}\|}:\bfv\in \widetilde{\mathbb{R}}^n_+ \right\}.
\end{align}
With high probability, the projection of $\bfh$ onto $\mathcal{C}^*$ is in $\widetilde{\mathcal{M}}^*$. Hence by (\ref{eq-obj4}) and (\ref{eq-direct1}), we can find $\bfv^*\geq 0$ such that $\widetilde{\bfa}-\breve{\bfa}+\widetilde{\bfv}^*-\breve{\bfv}^* \in \text{span}^\perp (\widetilde{\bfq}_1)$, $\|\widetilde{\bfa}-\breve{\bfa}+\widetilde{\bfv}^*-\breve{\bfv}^*\|\neq 0$, and
$$\|\Pi_{\mathcal{C}^*}(\bfh)\|=\frac{\bfh^\top (\widetilde{\bfa}-\breve{\bfa}+\widetilde{\bfv}^*-\breve{\bfv}^*)}{\|\widetilde{\bfa}-\breve{\bfa}+\widetilde{\bfv}^*-\breve{\bfv}^*\|}.$$
Under the assumption that $\delta<h_L(\lambda_0,\kappa,P_\mathcal{C})$,
$$\frac{1}{\sqrt{n}}\|\Pi_{\mathcal{C}^*}(\bfh)\|>\frac{1}{\sqrt{n}}\|\bfg\|$$ with probability tending to one. With the $\bfv^*$ chosen above, $\phi(\widetilde{\mathbf{q}}_1,\mathbf{g},\mathbf{h})=0.$ Therefore, we obtain
$$ P(\text{MLE does not exist})=P(\Phi(\widetilde{\bfq}_1,\mathbf{Q})>0)\leq 2P(\phi(\widetilde{\mathbf{q}}_1,\bfg,\bfh)>0)\rightarrow 0,$$
which implies that $P(\text{MLE exists})\rightarrow 1$.

\begin{rem}\label{rm-c}
{\rm
Suppose the censoring time $C_i$ depends on the covariate $\bfX_i\sim N(0,\mathbf{I}_p)$ through $\bfX_i^\top\boldsymbol{\theta}$ for some $\boldsymbol{\theta}\in\mathbb{R}^p$, and $C_i$ is  conditionally independent of the survival time $T_i$ given $\bfX_i$. Let $\bfA$ be an orthogonal matrix  with first row being $(\bbeta^*/\|\bbeta^*\|)^\top$ and second row being $\boldsymbol{\theta}^\top\bfP^\perp/\|\bfP^\perp\boldsymbol{\theta}\|$, where $\bfP=\bbeta^*\bbeta^{*\top}/\|\bbeta^*\|^2$. Let $\bfA\bfX_i=\bfq_i=(q_{i1},\dots,q_{ip})^\top$. We have $\bfX_i^\top\bbeta^*=\|\bbeta^*\|q_{i1}$ and 
$$\bfX_i^\top\boldsymbol{\theta}=\bfX_i^\top\bfP^\perp\boldsymbol{\theta}+\bfX_i^\top\bfP\boldsymbol{\theta}=\|\bfP^\perp\boldsymbol{\theta}\|q_{i2}+\bfX_i^\top\bbeta^*\boldsymbol{\theta}^\top\bbeta^*/\|\bbeta^*\|^2=\|\bfP^\perp\boldsymbol{\theta}\|q_{i2}+\|\bfP\boldsymbol{\theta}\|q_{i1}.$$
Thus we have found the equivalent model:
$$P_{Y_i,\bfX_i}(y,\mathbf{x})=P_{Y_i|\bfX_i}(y|\bfX_i^\top\bbeta^*,\bfX_i^\top\boldsymbol{\theta})P_{\bfX_i}(\mathbf{x})$$
which can be expressed equivalently as
$$P_{y_i,\bfq_i}(y,\mathbf{q})=P_{y_i|\bfq_i}(y|q_{i1},\|\bfP^\perp\boldsymbol{\theta}\|q_{i2}+\|\bfP\boldsymbol{\theta}\|q_{i1})P_{\bfq_i}(\mathbf{q})$$
through the rotation $\mathbf{A}$,
where 
\begin{align*}
&\bfq_i=(q_{i1},\dots,q_{ip})^\top\sim N(0,\mathbf{I}_p),\quad (q_{i3},\dots,q_{ip})\perp(y_i,q_{i1}, q_{i2}).
\end{align*}
Analogy to the case where $C_i$ is independent of $\bfX_i$, we can derive similar results by replacing $\widetilde{\bfq}_1,\bfQ_2$ with $\bfQ_1\in\mathbb{R}^{n\times 2}$ and $\bfQ_2\in\mathbb{R}^{n\times(p-2)}$.
The argument is alike if $C_i$ depends on multiple linear combinations of $\bfX_i$.
}
\end{rem}

\section{Error analysis of the MPLE}\label{sec:error-analysis}
\subsection*{Reformulating the PO}
Let $\bfH=\sqrt{p}(\bfX_1,\dots,\bfX_n)^\top$ and $\kappa=\|\bbeta^*\|/\sqrt{p}$. Recall the definition of the partial log-likelihood $L$ in (\ref{PL}). We can 
express it as
\begin{align*}
L(\bbeta)= \frac{1}{n}\bDelta^\top\bfu - \frac{1}{n}\bDelta^\top \log\left(\bfA \exp(\bfu)\right),  
\end{align*}
where $$\bfu=\frac{1}{\sqrt{p}}\bfH\bbeta,$$  $\bDelta=(\Delta_1,\dots,\Delta_n)^\top$ and $\bfA=n^{-1}(\bfa_1,\dots,\bfa_n)^\top$ with $\bfa_i=(\mathbf{1}\{Y_1\geq Y_i\},\dots,\mathbf{1}\{Y_n\geq Y_i\})^\top$.
By introducing a Lagrange multiplier $\bfv$, we can write the optimization problem in (\ref{PL}) as a min-max optimization
\begin{align}\label{AO-1}
&\min_{\bbeta\in\mathbb{R}^p,\bfu\in\mathbb{R}^n}\max_{\bfv\in\mathbb{R}^n}-\frac{1}{n}\bDelta^\top \bfu + \frac{1}{n} \bDelta^\top \log\left(\bfA \exp(\bfu)\right)+\frac{\bfv^\top}{n}\left(\bfu-\frac{1}{\sqrt{p}}\bfH\bbeta\right).
\end{align}
The bilinear form $\bfv^\top \bfH \bbeta$ depends on $\bDelta$ and $\mathbf{A}$. Define $\bfP=\bbeta^*\bbeta^{*\top}/\|\bbeta^*\|^2$ and 
$\bfP^\perp=I-\bfP.$ We have
$$\bfH=\bfH_1+\bfH_2,\quad \bfH_1=\bfH \bfP,\quad \bfH_2=\bfH\bfP^\perp.$$
With the above decomposition, (\ref{AO-1}) can be rewritten as 
\begin{align*}
&\min_{\bbeta\in\mathbb{R}^p,\bfu\in\mathbb{R}^n}\max_{\bfv\in\mathbb{R}^n}-\frac{1}{n}\bDelta^\top\bfu + \frac{1}{n} \bDelta^\top \log\left(\bfA \exp(\bfu)\right)+\frac{1}{n}\bfv^\top\left(\bfu-\frac{1}{\sqrt{p}}\bfH_1\bbeta\right)-\frac{1}{n\sqrt{p}}\bfv^\top\bfH_2\bfP^\perp\bbeta.   
\end{align*}
We note that the objective function above is convex with respect
to $\bbeta$ and $\bfu$ and concave with respect to $\bfv$. Using the CGMT, we consider the AO problem defined as
\begin{equation}\label{AO-2}
\begin{split}
\min_{\bbeta\in\mathbb{R}^p,\bfu\in\mathbb{R}^n}\max_{\bfv\in\mathbb{R}^n}&-\frac{1}{n} \bDelta^\top\bfu + \frac{1}{n} \bDelta^\top \log\left(\bfA \exp(\bfu)\right)+\frac{1}{n}\bfv^\top\left(\bfu-\frac{1}{\sqrt{p}}\bfH_1\bbeta\right)
\\&-\frac{1}{n\sqrt{p}}(\bfv^\top \bfh \|\bfP^\perp\bbeta\|+\|\bfv\|\bfg^\top\bfP^\perp\bbeta),
\end{split}
\end{equation}
where $\bfh\in\mathbb{R}^n$ and $\bfg\in\mathbb{R}^p$ both have i.i.d. standard normal entries that are independent with the other random quantities.

\subsection*{Analysis of AO}
Next we analyze the AO in (\ref{AO-2}). The goal here is to turn the vector optimization problem into an equivalent form of a scalar optimization problem. We first perform the maximization with respect to the direction of $\bfv$. The terms that are related to $\bfv$ induce the
following maximization with respect to $\bfv$
\begin{equation}\label{AO-v1}
\max_{\bfv\in\mathbb{R}^n}\frac{1}{n}\bfv^\top\left(\bfu-\frac{1}{\sqrt{p}}\bfH_1\bbeta-\frac{1}{\sqrt{p}}\bfh \|\bfP^\perp\bbeta\|\right)-\frac{1}{n\sqrt{p}}\|\bfv\|\bfg^\top\bfP^\perp\bbeta.
\end{equation}
The direction of the optimizer $\bfv^*$ must satisfy that
\begin{align*}
\frac{\bfv^*}{\|\bfv^*\|}=\frac{\bfu-\frac{1}{\sqrt{p}}\bfH_1\bbeta-\frac{1}{\sqrt{p}}\bfh \|\bfP^\perp\bbeta\|}{\left\|\bfu-\frac{1}{\sqrt{p}}\bfH_1\bbeta-\frac{1}{\sqrt{p}}\bfh \|\bfP^\perp\bbeta\|\right\|}.  
\end{align*}
Thus we can write (\ref{AO-v1}) as
\begin{equation}\label{AO-v2}
\max_{r\geq 0}r\left(\frac{1}{n}\left\|\bfu-\frac{1}{\sqrt{p}}\bfH_1\bbeta-\frac{1}{\sqrt{p}}\bfh \|\bfP^\perp\bbeta\|\right\|-\frac{1}{n\sqrt{p}}\bfg^\top\bfP^\perp\bbeta\right),
\end{equation}
where $r=\|\bfv^*\|$. Plugging the above expression into (\ref{AO-2}), we obtain
\begin{equation}\label{AO-3}
\begin{split}
\min_{\bbeta\in\mathbb{R}^p,\bfu\in\mathbb{R}^n}\max_{r\geq 0}&-\frac{1}{n} \bDelta^\top\bfu + \frac{1}{n}\bDelta^\top \log\left(\bfA \exp(\bfu)\right)
\\&+r\left(\frac{1}{n}\left\|\bfu-\frac{1}{\sqrt{p}}\bfH_1\bbeta-\frac{1}{\sqrt{p}}\bfh \|\bfP^\perp\bbeta\|\right\|-\frac{1}{n\sqrt{p}}\bfg^\top\bfP^\perp\bbeta\right).
\end{split}
\end{equation}
As the original optimization problem is convex with respect to $\bbeta$ and $\bfu$ and concave with respect to $\bfv$, in an asymptotic sense, we can flip the maximization with the minimization. We consider the following problem,
\begin{align}
&\min_{\bbeta\in\mathbb{R}^p}\frac{1}{n}\left\|\bfu-\frac{1}{\sqrt{p}}\bfH_1\bbeta-\frac{1}{\sqrt{p}}\bfh \|\bfP^\perp\bbeta\|\right\|-\frac{1}{n\sqrt{p}}\bfg^\top\bfP^\perp\bbeta \nonumber
\\=&\min_{\bbeta\in\mathbb{R}^p}\frac{1}{n}\left\|\bfu-\frac{1}{\sqrt{p}}\bfH\bbeta^*a-\frac{1}{\sqrt{p}}\bfh \|\bfP^\perp\bbeta\|\right\|-\frac{1}{n\sqrt{p}}\frac{\bfg^\top\bfP^\perp\bbeta}{\|\bfP^\perp\bbeta\|}\|\bfP^\perp\bbeta\| \nonumber
\\=&\min_{a\in\mathbb{R},b\geq 0}\frac{1}{n}\left\|\bfu-\frac{1}{\sqrt{p}}\bfH\bbeta^*a-\frac{1}{\sqrt{p}}\bfh b\right\|-\frac{1}{n\sqrt{p}}\|\bfP^\perp \bfg\| b, \label{eq-obj5}
\end{align}
where 
\begin{align*}
a=\frac{\bbeta^\top \bbeta^*}{\|\bbeta^*\|^2}\quad \text{and}\quad b=\|\bfP^\perp\bbeta\|. 
\end{align*}
Plugging (\ref{eq-obj5}) into (\ref{AO-3}) yields that
\begin{equation}\label{AO-4}
\begin{split}
\min_{a\in\mathbb{R},b\geq 0,\bfu\in\mathbb{R}^n}\max_{r\geq 0}&-\frac{1}{n}\bDelta^\top\bfu + \frac{1}{n}\bDelta^\top \log\left(\bfA \exp(\bfu)\right)
\\&+r\left(\frac{1}{n}\left\|\bfu-\frac{1}{\sqrt{p}}\bfH\bbeta^*a-\frac{1}{\sqrt{p}}\bfh b\right\|-\frac{1}{n\sqrt{p}}\|\bfP^{\perp} \bfg\| b\right).
\end{split}
\end{equation}
We notice that for any $s_0>0,$
\begin{align*}
\min_{v\geq 0}\frac{1}{2v}+\frac{v s_0^2}{2}=s_0.    
\end{align*}
Using this fact, we can reformulate (\ref{AO-4})
as 
\begin{equation}\label{AO-5}
\begin{split}
\min_{a\in\mathbb{R},b,v\geq 0,\bfu\in\mathbb{R}^n}\max_{r\geq 0}&-\frac{1}{n}\bDelta^\top\bfu + \frac{1}{n}\bDelta^\top \log\left(\bfA \exp(\bfu)\right)+\frac{r}{2v}
\\&+\frac{rv}{2}\left\|\frac{\bfu}{n}-\frac{1}{n\sqrt{p}}\bfH\bbeta^*a-\frac{1}{n\sqrt{p}}\bfh b\right\|^2-\frac{r}{n\sqrt{p}}\|\bfP^{\perp} \bfg\| b.
\end{split}
\end{equation}
Replacing $v$, $r$ and $b$ by $\sqrt{n}v$, $\sqrt{n}r$ and $\sqrt{p}b$ respectively, we obtain 
\begin{equation}\label{AO-6}
\begin{split}
\min_{a\in\mathbb{R},b,v\geq 0,\bfu\in\mathbb{R}^n}\max_{ r\geq 0}&-\frac{1}{n}\bDelta^\top\bfu + \frac{1}{n}\bDelta^\top \log\left(\bfA \exp(\bfu)\right)+\frac{r}{2v}
\\&+\frac{rv}{2}\left\|\frac{\bfu}{\sqrt{n}}-\frac{1}{\sqrt{n p}}\bfH\bbeta^*a-\frac{1}{\sqrt{n}}\bfh b\right\|^2-\frac{r}{\sqrt{n}}\|\bfP^{\perp} \bfg\| b.
\end{split}
\end{equation}
Some algebra yields that
\begin{align*}
&-\frac{1}{n} \bDelta^\top\bfu+  \frac{rv}{2}\left\|\frac{\bfu}{\sqrt{n}}-\frac{1}{\sqrt{n p}}\bfH\bbeta^*a-\frac{1}{\sqrt{n }}\bfh b\right\|^2
\\=&\frac{rv}{2}\left\|\frac{\bfu}{\sqrt{n}}-\frac{1}{\sqrt{n p}}\bfH\bbeta^*a-\frac{1}{\sqrt{n}}\bfh b-\frac{\bDelta }{rv\sqrt{n}}\right\|^2-\frac{\|\bDelta \|^2}{2nrv}-\frac{ \bDelta^\top \bfH \bbeta^* a}{n\sqrt{p}}-\frac{ \bDelta^\top \bfh b}{n}.
\end{align*}
As $\bfg\in\mathbb{R}^p$ has i.i.d standard normal entries,
we have
\begin{align*}
&\frac{\|\bfP^{\perp} \bfg\|}{\sqrt{p}}\rightarrow^p 1,
\end{align*}
by the law of large numbers. Using Assumption A3, we have
\begin{align*}
&\frac{\bDelta }{n}\rightarrow^p 1-\mathbb{E}[S(C|\kappa Z)], \\  
&\frac{ \bDelta^\top \bfH \bbeta^* }{n\sqrt{p}}=\frac{ \bDelta^\top \bfH \bbeta^* }{n\|\bbeta^*\|}\frac{\|\bbeta^*\|}{\sqrt{p}}\rightarrow^p -\kappa
\mathbb{E}[S(C|\kappa Z)Z],\\
&\frac{\bDelta^\top \bfh}{n}\rightarrow^p 0.
\end{align*}
Combining the above arguments leads to the following optimization problem
\begin{equation}\label{AO-6}
\begin{split}
\min_{a\in\mathbb{R},b,v\geq 0,\bfu\in\mathbb{R}^n}\max_{ r\geq 0}&\frac{1}{n}\bDelta^\top \log\left(\bfA \exp(\bfu)\right)+\frac{r}{2v}
+\frac{rv}{2n}\left\|\bfu-\kappa a\mathbf{q}-b \bfh-\frac{\bDelta }{rv}\right\|^2
\\&-\frac{1-\mathbb{E}[S(C|\kappa Z)]}{2rv}+\kappa a\mathbb{E}[S(C|\kappa Z)Z]-r \sqrt{\delta} b,
\end{split}
\end{equation}
where $\mathbf{q}=(q_1,\dots,q_n)^\top=\bfH \bbeta^*/(\kappa\sqrt{p})$. Let
\begin{align*}
&G_n(\bfu):=\bDelta^\top \log\left(\bfA \exp(\bfu)\right)
=\sum_{i=1}^n\Delta_i\log\left(\frac{1}{n}\sum_{j=1}^n\mathbf{1}\{Y_j\ge Y_i\}\exp(u_j)\right)
\end{align*}
and $\bxi=(\xi_1,\dots,\xi_n)^\top =\kappa a\mathbf{q}+b \bfh+\frac{\bDelta }{rv}.$
Define the Moreau envelope function
\begin{align*}
&\min_{\bfu\in\mathbb{R}^n} G_n(\bfu) +\frac{rv}{2}\left\|\bfu-\bxi\right\|^2=M_{G_n}\left(\bxi;\frac{1}{r v}\right).
\end{align*}
Then (\ref{AO-6}) becomes
\begin{equation}\label{AO-7}
\begin{split}
\min_{a\in\mathbb{R},b,v\geq 0}\max_{r\geq 0}&\frac{1}{n}M_{G_n}\left(\bxi;\frac{1}{r v}\right)
+\frac{r}{2v}
-\frac{1-\mathbb{E}[S(C|\kappa Z)]}{2rv}+\kappa a\mathbb{E}[S(C|\kappa Z)Z]-r \sqrt{\delta}  b.
\end{split}
\end{equation}

\subsection*{Analysis of the Moreau envelope function}\label{sec:M}
Our goal here is to show that as $n\rightarrow \infty,$
\begin{align*}
\frac{1}{n}M_{G_n}\left(\bxi;\frac{1}{r v}\right)   \rightarrow M\left(\kappa a,b,\frac{1}{rv}\right) 
\end{align*}
for some limiting function $M(\cdot,\cdot,\cdot)$. To facilitate the derivations, we introduce some stochastic processes that are useful in the survival analysis \citep{AG82}. Consider an $n$-dimensional counting process $\mathbf{N}^{(n)}(t)=(N_1(t),\dots,N_n(t))$ for $t\geq 0$, where $N_i(t)$ counts the number of observed events for the $i$th individual in the time
interval $[0,1]$. The sample paths of $N_1,\dots,N_n$ are step functions, zero at $t = 0$, with jumps of size $+1$ only. Furthermore, no two components jump at the same time. Let $Y_i(t)\in \{0,1\}$ be a predictable at risk indicator process that can be constructed from data. Note that $N_i(t)$ is a counting process with the intensity process $Y_i(t)\exp(\bfX_i^\top \bbeta^*)\lambda_0(t)$. We can write 
\begin{align*}
\frac{1}{n}G_n(\bfu)=&\frac{1}{n}\sum_{i=1}^n\Delta_i\log\left(\frac{1}{n}\sum_{j=1}^n\mathbf{1}\{Y_j\ge Y_i\}\exp(u_j)\right)
\\=&\int^{1}_{0}\log\left(\frac{1}{n}\sum^{n}_{j=1}Y_j(s)\exp(u_j)\right) d\bar{N}_n(s)
\\ = &\int^{1}_{0}\log\left(\frac{1}{n}\sum^{n}_{j=1}Y_j(s)\exp(u_j)\right)R_n(s;\bbeta^*)\lambda_0(s)ds
\end{align*}
where $\bar{N}_n(t)=n^{-1}\sum^{n}_{i=1}N_i(t)$ and 
$$R_n(s;\bbeta^*)=\frac{1}{n}\sum^{n}_{i=1}Y_i(s)\exp(\bfX_i^\top \bbeta^*)=\frac{1}{n}\sum^{n}_{i=1}Y_i(s)\exp(\kappa q_i).$$
Thus we obtain
\begin{align*}
&\min_{\bfu\in\mathbb{R}^n}\frac{1}{n}G_n(\bfu) +\frac{rv}{2n}\left\|\bfu-\bxi\right\|^2
\\ = &\min_{\bfu\in\mathbb{R}^n} \int^{1}_{0}\log\left(\frac{1}{n}\sum^{n}_{j=1}Y_j(s)\exp(u_j)\right)\frac{1}{n}\sum^{n}_{i=1}Y_i(s)\exp(\kappa q_i)\lambda_0(s)ds
\\&+\frac{rv}{2n}\sum^{n}_{j=1}(u_j-\xi_j)^2.
\end{align*}
The first order condition implies that at the optimal $\bfu^*=(u_1^*,\dots,u_n^*)$
\begin{align*}
rv(u_k^*-\xi_k)=-\int^{1}_{0}\frac{Y_k(s)\exp(u_k^*)}{\frac{1}{n}\sum^{n}_{j=1}Y_j(s)\exp(u_j^*)}R_n(s;\bbeta^*)\lambda_0(s)ds.    
\end{align*}
Squaring both sides, summing over $k$ and scaling both sides by $1/n$, we obtain
\begin{align*}
\frac{r^2v^2}{n}\sum^{n}_{k=1}(u_k^*-\xi_k)^2=\int^{1}_{0}\int^{1}_{0}\frac{\frac{1}{n}\sum^{n}_{k=1}Y_k(s)Y_k(t)\exp(2u_k^*)}{\frac{1}{n^2}\sum^{n}_{i,j=1}Y_i(s)Y_j(t)\exp(u_i^*+u_j^*)}R_n(s;\bbeta^*)R_n(t;\bbeta^*)\lambda_0(s)\lambda_0(t)dsdt.    
\end{align*}
Under Assumption A4, we have
\begin{align*}
\frac{r^2v^2}{n}\sum^{n}_{k=1}(u_k^*-\xi_k)^2\rightarrow^p \int^{1}_{0}\int^{1}_{0}\frac{S(s,t)}{S(s)S(t)}R(s)R(t)ds dt,    
\end{align*}
and 
\begin{align*}
\int^{1}_{0}\log\left(\frac{1}{n}\sum^{n}_{j=1}Y_j(s)\exp(u_j^*)\right)\frac{1}{n}\sum^{n}_{i=1}Y_i(s)\exp(\kappa q_i)\lambda_0(s)ds \rightarrow^p \int^{1}_{0}\log(S(s))R(s)ds.   
\end{align*}
Therefore, we get
\begin{align*}
\frac{1}{n}M_{G_n}\left(\bxi;\frac{1}{r v}\right)\rightarrow^p  
\int^{1}_{0}\log(S(s))R(s)ds+\frac{1}{2rv}\int^{1}_{0}\int^{1}_{0}\frac{S(s,t)}{S(s)S(t)}R(s)R(t)ds dt.
\end{align*}
Next we show how $S(s,t)$ and $S(t)$ depend on $\xi_i$.
Note that
\begin{align*}
\frac{rv(u_k^*-\xi_k)}{\exp(u_k^*)}=-\int^{1}_{0}\frac{Y_k(s)}{\frac{1}{n}\sum^{n}_{j=1}Y_j(s)\exp(u_j^*)}R_n(s;\bbeta^*)\lambda_0(s)ds\rightarrow^p
-\int^{1}_{0}\frac{Y_k(s)}{S(s)}R(s)ds.
\end{align*}
We can solve this nonlinear equation for $u_k^*$ in terms of $rv$, $\xi_k$ and 
$\int^{1}_{0}\frac{Y_k(s)}{S(s)}R(s)ds$.
Asymptotically, $u_k^*$ satisfies the nonlinear equation 
$$\frac{rv(u_k^*-\xi_k)}{\exp(u_k^*)}=
-\int^{1}_{0}\frac{Y_k(u)}{S(u)}R(u)du.$$ We write the solution as
\begin{align*}
u_k^*=\log\left\{K\left(\xi_k,\int^{1}_{0}\frac{Y_k(u)}{S(u)}R(u)du,rv\right)\right\}.   
\end{align*}
Then we have
\begin{align*}
\frac{1}{n}\sum^{n}_{k=1}Y_k(s)\exp(u_k^*)=\frac{1}{n}\sum^{n}_{k=1}Y_k(s) K\left(\xi_k,\int^{1}_{0}\frac{Y_k(u)}{S(u)}R(u)du,rv\right). 
\end{align*}
Letting $n\rightarrow+\infty$, we have $S(\cdot)$ being the solution to the following equation
\begin{align*}
S(s)=E\left[Y(s)K\left(\xi,\int^{1}_{0}\frac{Y(u)}{S(u)}R(u)du,rv\right)\right].    
\end{align*}
Similarly, we have
\begin{align*}
\frac{1}{n}\sum^{n}_{k=1}Y_k(s)Y_k(t)\exp(2u_k^*)=&\frac{1}{n}\sum^{n}_{k=1}Y_k(s)Y_k(t)K^2\left(\xi_k,\int^{1}_{0}\frac{Y(u)}{S(u)}R(u)du,rv\right)
\end{align*}
which implies 
\begin{align*}
S(s,t)=E\left[Y(s)Y(t)K^2\left(\xi,\int^{1}_{0}\frac{Y(u)}{S(u)}R(u)du,rv\right)\right].    
\end{align*}
Combining the above results, we have shown that
\begin{align*}
&M\left(\kappa a,b,\frac{1}{rv}\right) \\  =&\int^{1}_{0}\log(S(s))R(s)ds+\frac{1}{2rv}\int^{1}_{0}\int^{1}_{0}\frac{E\left[Y(s)Y(t)K^2\left(\xi,\int^{1}_{0}\frac{Y(u)}{S(u)}R(u)du,rv\right)\right]}{S(s)S(t)}R(s)R(t)ds dt,  
\end{align*}
where
$S(\cdot)$ is the solution to the equation
\begin{align*}
S(s)=E\left[Y(s)K\left(\xi,\int^{1}_{0}\frac{Y(u)}{S(u)}R(u)du,rv\right)\right]
\end{align*}
with 
$\xi=\kappa a q+b h+\frac{\Delta }{rv}$ and 
$R(s)=\lambda_0(s)E\left[Y(s)\exp(\kappa q)\right].$

\subsection*{Optimality conditions}
Since the objective function is smooth, when the optimal values are all non-zero, they
should satisfy the first order optimality condition. We derive the conditions for $a$, $b$, $v$ and $r$ for the problem 
\begin{equation}\label{AO-8}
\begin{split}
\min_{a\in\mathbb{R},b,v\geq 0}\max_{r\geq 0}&M\left(\kappa a,b,\frac{1}{rv}\right)
+\frac{r}{2v}
-\frac{1-\mathbb{E}[S(C|\kappa Z)]}{2rv}+\kappa a\mathbb{E}[S(C|\kappa Z)Z]-r \sqrt{\delta} b.
\end{split}
\end{equation}
separately below. Let 
\begin{align*}
M_i(a_1,a_2,a_3)=\frac{\partial M(a_1,a_2,a_3)}{\partial a_i},\quad 1\leq i\leq 3.    
\end{align*}
\begin{itemize}
    \item \textbf{Condition for $a$}
    $$M_1\left(\kappa a,b,\frac{1}{rv}\right)
+\mathbb{E}[S(C|\kappa Z)Z]=0.$$
    \item \textbf{Condition for $b$}
    \begin{align*}
    M_2\left(\kappa a,b,\frac{1}{rv}\right) =r \sqrt{\delta} .    
    \end{align*}
    \item \textbf{Condition for $v$}
    \begin{align*}
    -\frac{1}{rv^2}M_3\left(\kappa a,b,\frac{1}{rv}\right)
-\frac{r}{2v^2}
+\frac{1-\mathbb{E}[S(C|\kappa Z)]}{2rv^2}=0.
    \end{align*}
    \item \textbf{Condition for $r$}
    \begin{align*}
    -\frac{1}{r^2v}M_3\left(\kappa a,b,\frac{1}{r v}\right)
+\frac{1}{2v}
+\frac{1-\mathbb{E}[S(C|\kappa Z)]}{2r^2v}-\sqrt{\delta }b=0.   
    \end{align*}
\end{itemize}
The last two equations imply that
\begin{align*}
b=\frac{1}{v\sqrt{\delta}}.    
\end{align*}
Therefore, we obtain the following set of equations
\begin{align*}
&M_1\left(\kappa a,b,\frac{b\sqrt{\delta}}{r}\right)
=-\mathbb{E}[S(C|\kappa Z)Z], \\
&M_2\left(\kappa a,b,\frac{b\sqrt{\delta}}{r}\right) =r\sqrt{\delta},\\
&M_3\left(\kappa a,b,\frac{b\sqrt{\delta}}{r}\right)
=-\frac{r^2}{2}
+\frac{1}{2}\left(1-\mathbb{E}[S(C|\kappa Z)]\right).
\end{align*}

\subsection*{Approximate solution}
Recall that $b\sqrt{\delta}=1/v$. Consider the problem
\begin{equation*}
\min_{a\in\mathbb{R},b\geq 0}\max_{r\geq 0}\frac{1}{n}M_{G_n}\left(\bxi;\frac{b\sqrt{\delta}}{r}\right) -\frac{b\sqrt{\delta}}{2 r}\left(1-\mathbb{E}[S(C|\kappa Z)]\right)+\kappa a\mathbb{E}[S(C|\kappa Z)Z]-\frac{r\sqrt{\delta} b}{2},
\end{equation*}
where
\begin{align*}
&M_{G_n}\left(\bxi;\frac{b\sqrt{\delta}}{r }\right)=\min_{\bfu\in\mathbb{R}^n} \sum_{i=1}^n\Delta_i\log\left(\frac{1}{n}\sum_{j=1}^n\mathbf{1}\{Y_j\ge Y_i\}\exp(u_j)\right) +\frac{r}{2b\sqrt{\delta}}\left\|\bfu-\bxi\right\|^2,\end{align*}
with $$\bxi=\kappa a\frac{\bfH \bbeta^*}{\|\bbeta^*\|}+b \bfh+\frac{\sqrt{\delta} b\bDelta }{r}$$
for $\bfH=\sqrt{p}(\bfX_1,\dots,\bfX_n)^\top.$ We solve the above min-max problem numerically to obtain the approximate solution $(a^*,b^*,r^*)$ to the set of nonlinear equations.

\subsection*{Proof of Theorem \ref{thm-error}}
We provide a sketch of the proof. Inspecting the derivations in Section \ref{sec:error-analysis}, we know that the scalar quantity $a$ results from the transformation
$$a=\frac{\bbeta^\top \bbeta^*}{\|\bbeta^*\|^2},$$
and the quantity $b$ is related to $\bbeta$ through
\begin{align*}
b=\frac{\|\bfP^\perp\bbeta\|}{\sqrt{p}}.
\end{align*}
Let $\widehat{\bbeta}_{\text{AO}}$ be the solution to the AO in (\ref{AO-2}). 
As (\ref{AO-2}) and (\ref{AO-8}) are asymptotically equivalent, we have
\begin{align*}
\frac{\widehat{\bbeta}^\top_{\text{AO}} \bbeta^*}{\|\bbeta^*\|^2}\rightarrow a^* \quad \text{and} \quad
\frac{\|\bfP^\perp\widehat{\bbeta}_{\text{AO}}\|}{\sqrt{p}}\rightarrow b^*.    
\end{align*}
Therefore, we have
\begin{align*}
&\frac{\|\widehat{\bbeta}_{\text{AO}}- \bbeta^*\|^2}{\|\bbeta^*\|^2}
=\frac{\|\bfP\widehat{\bbeta}_{\text{AO}}\|^2+\|\bfP^\perp\widehat{\bbeta}_{\text{AO}}\|^2-2\widehat{\bbeta}^\top_{\text{AO}} \bbeta^*+\|\bbeta^*\|^2}{\|\bbeta^*\|^2}\rightarrow^p (a^*-1)^2+\frac{ (b^*)^2}{\kappa^2},\\
&\frac{\|\widehat{\bbeta}_{\text{AO}}-a^*\bbeta^*\|^2}{p}\rightarrow^p (b^*)^2.
\end{align*}
Now consider the event
\begin{align*}
\mathcal{S}=\left\{\bbeta\in\mathbb{R}^p: \left|\frac{\|\bbeta- \bbeta^*\|^2}{\|\bbeta^*\|^2}-(a^*-1)^2-\frac{ (b^*)^2}{\kappa^2}\right|\leq \epsilon \right\}    
\end{align*}
for any $\epsilon>0.$ We have $P(\widehat{\bbeta}_{\text{AO}}\in\mathcal{S})\rightarrow 1.$ Using (iv) of Theorem \ref{thm1}, we have $P(\widehat{\bbeta}\in\mathcal{S})\rightarrow 1.$ The other result can be proved similarly.


\subsection*{Proof of Theorem \ref{thm-test}}
From the analysis of the AO problem, we have 
$\bfP^\perp\widehat{\bbeta}_{\text{AO}}/\|\bfP^\perp\widehat{\bbeta}_{\text{AO}}\|=\bfP^\perp\bfg/\|\bfP^\perp\bfg\|$. For any fixed $\mathbf{d}\in\mathbb{R}^p$ with $\mathbf{d}^\top \bbeta^*=O(1)$ and $\|\mathbf{d}\|^2=O(1)$, we have
\begin{align*}
\mathbf{d}^\top \widehat{\bbeta}_{\text{AO}}
=& \mathbf{d}^\top\bfP\widehat{\bbeta}_{\text{AO}}+\mathbf{d}^\top\bfP^\perp\widehat{\bbeta}_{\text{AO}}
\\ =& \mathbf{d}^\top\bbeta^*\frac{\widehat{\bbeta}^\top_{\text{AO}}\bbeta^*}{\|\bbeta^*\|^2}+ \frac{\mathbf{d}^\top\bfP^\perp\widehat{\bbeta}_{\text{AO}}}{\|\bfP^\perp\widehat{\bbeta}_{\text{AO}}\|}\|\bfP^\perp\widehat{\bbeta}_{\text{AO}}\|
\\ =& \mathbf{d}^\top\bbeta^*\frac{\widehat{\bbeta}^\top_{\text{AO}}\bbeta^*}{\|\bbeta^*\|^2}+ \frac{\mathbf{d}^\top\bfP^\perp\bfg}{\|\bfP^\perp\bfg\|}\|\bfP^\perp\widehat{\bbeta}_{\text{AO}}\| 
\\=&  (a^*+o_p(1))\mathbf{d}^\top\bbeta^*+(b^*+o_p(1))\mathbf{d}^\top\bfP^\perp\bfg.
\end{align*}
where the orders for the $o_p(1)$ terms are uniform over all $\mathbf{d}$ with $\mathbf{d}^\top \bbeta^*=O(1)$ and $\|\mathbf{d}\|^2=O(1)$. 
Choosing $\mathbf{d}$ to be the standard basis vector corresponding to any $j\in\mathcal{S}_0$ gives $\widehat{\beta}_{\text{AO},j}=(b^*+o_p(1))(\bfP^\perp\bfg)_j.$ Thus
\begin{align*}
\frac{1}{|\mathcal{S}_0|}\sum_{j\in\mathcal{S}_0}\widehat{\beta}_{\text{AO},j}^2=(b^*+o_p(1))^2\frac{1}{|\mathcal{S}_0|}\sum_{j\in\mathcal{S}_0}(\bfP^\perp\bfg)_j^2\rightarrow^p (b^*)^2.   
\end{align*}
Consider the event 
$$\mathcal{S}=\left\{\bbeta\in\mathbb{R}^p:\left|\frac{1}{|\mathcal{S}_0|}\sum_{j\in\mathcal{S}_0}\beta^2_j-(b^*)^2\right|\leq\epsilon\right\}$$
for any $\epsilon>0.$ Using (iv) of Theorem \ref{thm1}, we have $P(\widehat{\bbeta}\in\mathcal{S})\rightarrow 1$. Therefore,
\begin{align*}
\frac{1}{|\mathcal{S}_0|}\sum_{j\in\mathcal{S}_0}\widehat{\beta}_{j}^2\rightarrow^p (b^*)^2.       
\end{align*}
Let $\widehat{\bbeta}_{\mathcal{S}_0}=(\widehat{\beta}_{j})_{j\in\mathcal{S}_0}$. Following similar arguments as in the proof of Theorem 3 in \cite{SurModern2019}, we know that for
$\widehat{\bbeta}_{\mathcal{S}}/\|\widehat{\bbeta}_{\mathcal{S}_0}\|$ has the same distribution as that of $\mathbf{Z}_{\mathcal{S}}/\|\mathbf{Z}\|$, where $\mathbf{Z}=(Z_j)_{j\in\mathcal{S}_0}\in\mathbb{R}^{|\mathcal{S}_0|}$ has i.i.d $N(0,1)$ entries and $\mathbf{Z}_{\mathcal{S}}=(Z_j)_{j\in\mathcal{S}}$. As $\|\widehat{\bbeta}_{\mathcal{S}_0}\|/\|\mathbf{Z}\|\rightarrow b^*$, we obtain $
\widehat{\bbeta}_{\mathcal{S}}/b^*\rightarrow^d N(0,\mathbf{I}_l).$




\begin{thebibliography}{9}
		\bibitem[Sur et al.(2019)]{sur19}
Sur, P., Chen, Y., and Candès, E. J. (2019). The likelihood ratio test in high-dimensional logistic regression is asymptotically a rescaled chi-square. {\it Probability theory and related fields}, \textbf{175}, 487-558.

\bibitem[Andersen and Gill(1982)]{AG82}
Andersen, P. K., and Gill, R. D. (1982). Cox's regression model for counting processes: a large sample study. {\it Annals of Statistics}, \textbf{10}, 1100-1120.

\bibitem[Moreau(1962)]{Moreau1962}
Moreau, J. J. (1962). D{\'e}composition orthogonale d'un espace hilbertien selon deux c{\^o}nes mutuellement polaires. {\it Comptes rendus hebdomadaires des s{\'e}ances de l'Acad{\'e}mie des sciences}, 238--240.

\bibitem[Dhifallah et al.(2018)]{Dhifallah2018}
Dhifallah, O., Thrampoulidis, C., and Lu, Y. M. (2018). Phase retrieval via polytope optimization: Geometry, phase transitions, and new algorithms. {\it arXiv preprint} arXiv:1805.09555.

\bibitem[Hu and Lu(2019)]{Hu2019}
Hu, H., and Lu, Y. M. (2019, July). Asymptotics and optimal designs of SLOPE for sparse linear regression. In {\it 2019 IEEE International Symposium on Information Theory (ISIT)} (pp. 375-379). IEEE.

\bibitem[Gordon(1988)]{Gordon1988}
Gordon, Y. (1988). On Milman's inequality and random subspaces which escape through a mesh in $\mathbb{R}^n$. In \textit{Geometric aspects of functional analysis} (pp. 84-106). Springer, Berlin, Heidelberg.

\bibitem[Donoho et al.(2009)]{Donoho2009}
Donoho, D. L., Maleki, A., and Montanari, A. (2009). Message-passing algorithms for compressed sensing. \textit{Proceedings of the National Academy of Sciences}, \textbf{106}, 18914-18919.

\bibitem[Thrampoulidis et al.(2020)]{t2020}
Thrampoulidis, C., Oymak, S., and Soltanolkotabi, M. (2020). Theoretical insights into multiclass classification: A high-dimensional asymptotic view. \textit{arXiv preprint} arXiv:2011.07729.

\bibitem[Javanmard and Soltanolkotabi(2020)]{jm2020}
Javanmard, A., and Soltanolkotabi, M. (2020). Precise statistical analysis of classification accuracies for adversarial training. \textit{arXiv preprint} arXiv:2010.11213.

\bibitem[Liang and Sur(2020)]{Liang2020}
Liang, T., and Sur, P. (2020). A Precise High-Dimensional Asymptotic Theory for Boosting and Minimum-L1-Norm Interpolated Classifiers. \textit{arXiv preprint} arXiv:2002.01586.

\bibitem[Fang et al.(2017)]{Fang2017}
Fang, E. X., Ning, Y., and Liu, H. (2017). Testing and confidence intervals for high dimensional proportional hazards models. \textit{Journal of the Royal Statistical Society: Series B (Statistical Methodology)}, \textbf{79}, 1415-1437.

\bibitem[Kong et al.(2021)]{Kong2021}
Kong, S., Yu, Z., Zhang, X., and Cheng, G. (2021). High‐dimensional robust inference for Cox regression models using desparsified Lasso. \textit{Scandinavian Journal of Statistics}, \textbf{48}, 1068-1095.

\bibitem[Yu et al.(2018)]{Yu2018}
Yu, Y., Bradic, J., and Samworth, R. J. (2018). Confidence intervals for high-dimensional Cox models. \textit{arXiv preprint} arXiv:1803.01150.

\bibitem[Ga\"{i}ffas and Guilloux(2012)]{Gaiffas2012}
Ga\"{i}ffas, S., and Guilloux, A. (2012). High-dimensional additive hazards models and the Lasso. \textit{Electronic Journal of Statistics}, \textbf{6}, 522-546.

\bibitem[Kong and Nan(2014)]{Kong2014}
Kong, S., and Nan, B. (2014). Non-asymptotic oracle inequalities for the high-dimensional Cox regression via Lasso. \textit{Statistica Sinica}, \textbf{24}, 25-42.

\bibitem[Gui and Li(2005)]{gui2005}
Gui, J., and Li, H. (2005). Penalized Cox regression analysis in the high-dimensional and low-sample size settings, with applications to microarray gene expression data. \textit{Bioinformatics}, \textbf{21}, 3001-3008.

\bibitem[Zhang and Lu(2007)]{zhang2007}
Zhang, H. H., and Lu, W. (2007). Adaptive Lasso for Cox's proportional hazards model. \textit{Biometrika}, \textbf{94}, 691-703.

\bibitem[Huang et al.(2013)]{Huang2013}
Huang, J., Sun, T., Ying, Z., Yu, Y., and Zhang, C. H. (2013). Oracle inequalities for the lasso in the Cox model. \textit{Annals of statistics}, \textbf{41}, 1142.

\bibitem[Bradic et al.(2011)]{BradicCox2011}
Bradic, J., Fan, J., and Jiang, J. (2011). 
Regularization for Cox’s proportional hazards model with NP-dimensionality. 
\textit{The Annals of Statistics} \textbf{39}, 3092--3120.

\bibitem[Cand{\`e}s and Sur(2018)]{CandesPhase2018}
Cand{\`e}s, E. J. and Sur, P. (2018) .
The phase transition for the existence of the maximum likelihood estimate in high-dimensional logistic regression.
\textit{arXiv preprint} arXiv:1804.09753.

\bibitem[Donoho and Montanari(2016)]{DonohoHigh2016}
Donoho, D. and Montanari A. (2016).
High dimensional robust M-estimation: Asymptotic variance via approximate message passing. 
\textit{Probability Theory and Related Fields} \textbf{166}, 935--969.

\bibitem[El Karoui(2013)]{ElKarouiAsymptotic2013}
El Karoui, N. (2013).
Asymptotic behavior of unregularized and ridge-regularized high-dimensional robust regression estimators: rigorous results.
\textit{arXiv preprint} arXiv:1311.2445.

\bibitem[El Karoui et al.(2013)]{ElKarouiRobust2013}
El Karoui, N., Bean, D., Bickel, P. J., Lim, C., and Yu, B. (2013).
On robust regression with high-dimensional predictors. 
\textit{Proceedings of the National Academy of Sciences} \textbf{110}, 14557--14562.

\bibitem[Fan and Li(2002)]{FanCox2002}
Fan, J. and Li, R. (2002). 
Variable selection for Cox’s proportional hazards model and frailty model. 
\textit{The Annals of Statistics} \textbf{30}, 74--99.

\bibitem[Huber and Ronchetti(2009)]{HuberRobust2009}
Huber , P. J. and Ronchetti, E. (2009).
\textit{Robust statistics (second edition)}. John Wiley and Sons.

\bibitem[Jacobsen(1989)]{JacobsenExistence1989}
Jacobsen M. (1989).
Existence and Unicity of MLEs in Discrete Exponential Family Distributions. 
\textit{Scandinavian Journal of Statistics} \textbf{16}, 335--349.

\bibitem[Murphy and van der Vaart(2000)]{MurphyProfile2000}
Murphy, S. A. and van der Vaart, A. W. (2000). 
On profile likelihood. 
\textit{Journal of American Statistical Association} \textbf{95}, 449--465.

\bibitem[Salehi et al.(2019)]{SalehiImpact2019}
Salehi, F.,  Abbasi, E., and  Hassibi, B. (2019)
The impact of regulation on high-dimensional logistic regression.
\textit{Neural Information Processing Systems}.

\bibitem[Sur and Candès(2019)]{SurModern2019}
Sur, P. and  Candès, E. J. (2019) .
A modern maximum-likelihood theory for high-dimensional logistic regression.
\textit{Proceedings of the National Academy of Sciences} \textbf{116 }, 14516--14525.

\bibitem[Thrampoulidis et al.(2018)]{ThrampoulidisPrecise2018}
Thrampoulidis, C., Abbasi, E., and Hassibi, B. (2018).
Precise error analysis of regularized m-estimators in high dimensions. 
\textit{IEEE Transactions on Information Theory} \textbf{64}, 5592--5628.

\bibitem[Thrampoulidis et al.(2015)]{ThrampoulidisRegularized2015}
Thrampoulidis, C., Oymak, S., and Hassibi, B. (2015).
Regularized linear regression: A precise analysis of the estimation error.
\textit{In Conference on Learning Theory}, 1683--1709.

\bibitem[Tibshirani(1997)]{TibshiraniCox1997}
Tibshirani, R. (1997). 
The LASSO method for variable selection in the Cox model. 
\textit{Statistics in Medicine} \textbf{16}, 385--395.

\bibitem[Wilks(1938)]{Wilks1938}
Wilks, S. S. (1938). 
The large-sample distribution of the likelihood ratio for testing composite hypotheses. \textit{The annals of mathematical statistics}, \textbf{9}, 60--62.
	\end{thebibliography}
\end{document}